\documentclass[a4paper,11pt]{amsart}
\usepackage[colorlinks, linkcolor=blue,anchorcolor=Periwinkle,
    citecolor=blue,urlcolor=Emerald]{hyperref}
\usepackage[all]{xy}
\SelectTips{cm}{}

\usepackage{graphicx}
\usepackage{psfrag}

\usepackage{mathtools}
\usepackage{tikz}
\usepackage{tikz-cd}
\usepackage{adjustbox}
\usepackage{extarrows}

\usepackage{amssymb}
\usetikzlibrary{decorations.pathreplacing}
\usetikzlibrary{matrix,arrows}

\usepackage{enumitem}

\usepackage{stmaryrd}

\usepackage{quiver}


\setlength{\headsep}{0.75cm}

\textwidth15.1cm \textheight22.7cm \headheight12pt
\oddsidemargin.4cm \evensidemargin.4cm \topmargin0cm


\newcommand{\ie}{{i.e.}\ }
\newcommand{\cf}{{cf.}\ }

\newcommand{\ko}{\: , \;}

\newcommand{\ol}[1]{\overline{#1}}

\setcounter{tocdepth}{2}

\numberwithin{equation}{subsection}

\newtheorem{classification-theorem}[subsection]{Classification Theorem}
\newtheorem{decomposition-theorem}[subsection]{Decomposition Theorem}
\newtheorem{proposition-definition}[subsection]{Proposition-Definition}
\newtheorem{periodicity-conjecture}[subsection]{Periodicity Conjecture}

\newtheorem{theorem}{Theorem}
\numberwithin{theorem}{subsection}
\newtheorem{thmx}{Theorem}

\newtheorem{lemma}[theorem]{Lemma}
\newtheorem{proposition}[theorem]{Proposition}
\newtheorem{corollary}[theorem]{Corollary}

\newtheorem{example}[theorem]{Example}
\newtheorem{remark}[theorem]{Remark}
\newtheorem{assumptions}[theorem]{Assumptions}

\newcommand{\reminder}[1]{}

\newcommand{\Mod}{\mathrm{Mod}\,}

\newcommand{\per}{\mathrm{per}\,}
\newcommand{\pvd}{\mathrm{pvd}\,}
\newcommand{\add}{\mathrm{add}\,}
\newcommand{\sub}{\mathrm{sub}\,}
\newcommand{\op}{^{op}}

\newcommand{\Tr}{\mathrm{Tr}}
\newcommand{\tr}{\mathrm{tr}}
\newcommand{\PC}{\mathrm{PC}\,}

\newcommand{\scr}{\mathcal}

\newcommand{\cone}{\mathrm{cone}\,}
\newcommand{\cocone}{\mathrm{cocone}\,}

\newcommand{\Z}{\mathbb{Z}}

\newcommand{\C}{\mathbb{C}}
\newcommand{\R}{\mathbb{R}}
\newcommand{\D}{\mathbb{D}}

\newcommand{\iso}{\xrightarrow{_\sim}}
\newcommand{\liso}{\xleftarrow{_\sim}}
\newcommand{\id}{\mathbf{1}}

%
%

\newcommand{\Hom}{\mathrm{Hom}}

\newcommand{\RHom}{\mathrm{RHom}}

\newcommand{\Ext}{\mathrm{Ext}}

\newcommand{\End}{\mathrm{End}}
\newcommand{\rad}{\mathrm{rad}\,}

\newcommand{\ten}{\otimes}
\newcommand{\lten}{\overset{\boldmath{L}}{\ten}}

%
%
\newcommand{\ca}{{\mathcal A}}
\newcommand{\cb}{{\mathcal B}}
\newcommand{\cc}{{\mathcal C}}
\newcommand{\cd}{{\mathcal D}}
\newcommand{\ce}{{\mathcal E}}

\newcommand{\ch}{{\mathcal H}}

\newcommand{\cm}{{\mathcal M}}

\newcommand{\cp}{{\mathcal P}}

\newcommand{\ct}{{\mathcal T}}

\newcommand{\bp}{\mathbf{p}}

\newcommand{\ba}{\mathbf{a}}

\newcommand{\Ga}{\Gamma}
\newcommand{\La}{\Lambda}
\newcommand{\Si}{\Sigma}
\newcommand{\si}{\sigma}

\newcommand{\wg}{\wedge}

\newcommand{\eps}{\varepsilon}
\renewcommand{\phi}{\varphi}

\renewcommand{\hat}[1]{\widehat{#1}}

\renewcommand{\tilde}[1]{\widetilde{#1}}

\begin{document}

\date{September 2, 2024}

\title{On Amiot's conjecture}

\author{Bernhard Keller}
\address{Universit\'e Paris Cit\'e and Sorbonne Université, CNRS, IMJ-PRG, F-75013 Paris, France}
\email{bernhard.keller@imj-prg.fr}
\urladdr{https://webusers.imj-prg.fr/~bernhard.keller/}

\author{Junyang Liu}
\address{Universit\'e Paris Cit\'e and Sorbonne Université, CNRS, IMJ-PRG, F-75013 Paris, France 
and Yau Mathematical Sciences Center, Tsinghua University, Beijing 100084, China}
\email{liuj@imj-prg.fr}
\email{liujunya18@mails.tsinghua.edu.cn}
\urladdr{https://webusers.imj-prg.fr/~junyang.liu}

\begin{abstract}
In a survey paper in 2011, Amiot proposed a conjectural
characterisation of the cluster categories which were conceived in the mid 2000s to
lift the combinatorics of Fomin--Zelevinsky's cluster algebras to the categorical
level. This paper is devoted to a proof of (a variant of) her conjecture. More generally,
cluster categories admit higher-dimensional and relative variants, the so-called Higgs categories
recently introduced by Wu. We also prove higher-dimensional and relative variants
of the conjecture.
\end{abstract}

\keywords{Triangulated category, dg category, cluster category, Calabi--Yau category, Higgs category}

\subjclass[2020]{18G80, 16E35}


\maketitle

\vspace*{-1cm}
\tableofcontents

\section{Introduction}

\subsection{Overview}

We prove (a variant of) a conjecture formulated by Amiot \cite{Amiot11}
in 2011 in the form of a question. The conjecture characterises the {\em cluster categories} (and their higher-dimensional analogues), which have appeared \cite{MarshReinekeZelevinsky03, CalderoChapotonSchiffler06, BuanMarshReinekeReitenTodorov06, Keller05} in the mid 2000s in the theory of (additive) categorification of cluster algebras invented by Fomin--Zelevinsky \cite{FominZelevinsky02} around the year 2000. One of the main points in the proof is a lift \cite{KellerLiu23a} to the differential graded level of Amiot's construction \cite{Amiot09} of Calabi--Yau structures on Verdier quotients. Cluster categories also admit `relative' variants recently introduced by Wu \cite{Wu23} and known as {\em Higgs categories}. We also prove a relative variant of Amiot's conjecture.

\subsection{Cluster algebras and additive categorification}

Let us recall that a cluster algebra is a commutative algebra endowed with a family
of distinguished generators, the {\em cluster variables}, which are assembled into
finite sets of fixed cardinality called the {\em clusters}. The clusters are constructed
recursively starting from the datum of a {\em quiver} (=oriented graph). We refer
to \cite{FominWilliamsZelevinsky16, FominWilliamsZelevinsky17} and the references
given there for more information on cluster algebras. 
In additive categorification, we aim at constructing, from the quiver $Q$ of
a cluster algebra $\ca$, a triangulated category $\cc_Q$ and a decategorification map $\chi\colon \cc_Q \to \ca$ which associates elements
of $\ca$ with the objects in $\cc_Q$ so as to establish as close a correspondence as possible
between the combinatorics of the indecomposable rigid objects in
$\cc_Q$ and those of the cluster variables in $\ca$. For an acyclic quiver
$Q$, the construction of the corresponding {\em cluster category} $\cc_Q$
is due to Buan--Marsh--Reineke--Reiten--Todorov
\cite{BuanMarshReinekeReitenTodorov06}. As shown by work of 
Derksen--Weyman--Zelevinsky \cite{DerksenWeymanZelevinsky08},
in the general case of a quiver admitting oriented cycles, in addition
to the quiver $Q$, we need to consider a (non-degenerate) {\em potential},
\ie a formal linear combination  $W$ of cycles of $Q$. In this more general
case, the construction of the corresponding cluster category  $\cc_{Q,W}$ is due
to Amiot \cite{Amiot09} and that of the decategorification map
$\chi\colon \cc_{Q,W} \to \ca$ to Caldero--Chapoton \cite{CalderoChapoton06},
Palu \cite{Palu08a}, Plamondon \cite{Plamondon11} and others, \cf also \cite{Plamondon18}
and \cite{Keller10b, Keller12a}.
These cluster categories also admit natural higher-dimensional analogues
first studied by Guo \cite{Guo11} and later by several other authors, \cf for example
\cite{ThanhofferVandenBergh16, KalckYang16, IyamaYang18, Pressland17, IkedaQiu23, Jin23, Hanihara22a, Hanihara22}.

More generally, a cluster algebra is associated with an {\em ice quiver}, \ie a
quiver endowed with a (not necessarily full) subquiver whose vertices and
arrows are called {\em frozen}. The cluster variables associated with the
frozen vertices then occur in all the clusters. They are also called {\em coefficients}.
This more general class of cluster algebras is important for at least two reasons:
\begin{itemize}
\item[a)] cluster algebras appearing as coordinate algebras of algebraic
varieties (like Grassmannians, unipotent cells, Schubert cells, \ldots)
always have coefficients,
\item[b)] frozen vertices and arrows are crucial when we want to
{\em glue} ice quivers (respectively cluster algebras), an operation which
is of great importance for example in Fock--Goncharov's amalgamation
construction in their approach to higher Teichm\"uller theory
\cite{FockGoncharov06}.
\end{itemize}
The most successful approach to the categorification of cluster algebras
with coefficients is due to Geiss--Leclerc--Schr\"oer, \cf for example 
\cite{GeissLeclercSchroeer05, GeissLeclercSchroeer06,GeissLeclercSchroeer12,
GeissLeclercSchroeer13}. Instead of triangulated
categories, they use Frobenius exact categories whose associated stable
categories are ($\Hom$-finite and) $2$-Calabi--Yau as triangulated categories.
They construct them as extension-closed subcategories of categories of
finite-dimensional modules over preprojective algebras. As they show,
large classes of examples of cluster algebras from Lie theory admit
such {\em Frobenius exact categorifications}. On the other hand, 
it is easy to see that many simple ice quivers cannot admit
a Frobenius exact categorification (for example any non-empty
acyclic quiver with principal coefficients). In \cite{Wu23, Wu21, KellerWu23},
Wu and Keller--Wu have extended Geiss--Leclerc--Schr\"oer's
approach to (almost) all ice quivers with potential $(Q,F,W)$ by
introducing the {\em Higgs category} $\ch_{Q,F,W}$, a certain extriangulated
category in the sense of Nakaoka--Palu \cite{NakaokaPalu19} which generalises
the Frobenius exact categories used by Geiss--Leclerc--Schr\"oer. 
We refer to \cite{Christ22, Christ22a} for a slightly different categorical approach.
As shown by Wu \cite{Wu23}, Higgs categories also admit natural
higher-dimensional analogues.

\subsection{The characterising properties of cluster categories}

The cluster category $\cc_Q$ associated with a finite acyclic quiver $Q$ is a Karoubian, ($\Hom$-finite,) $2$-Calabi--Yau algebraic triangulated category with a canonical cluster-tilting object. In 2008, Keller--Reiten \cite{KellerReiten08} proved that a $\Hom$-finite triangulated category is triangle equivalent to the cluster category associated with a finite acyclic quiver $Q$ if and only if it is $2$-Calabi--Yau, algebraic and contains a cluster-tilting object with endomorphism algebra isomorphic to $kQ$. It is natural to try and extend this theorem to the case of non-acyclic quivers. In 2009, Amiot \cite{Amiot09} constructed the {\em generalised cluster category} $\cc_{Q,W}$ associated with a quiver with potential $(Q,W)$. She showed that for an acyclic quiver with the zero potential, the generalised cluster category $\cc_{Q,0}$ is equivalent to the classical cluster category $\cc_Q$. She proved that if $(Q,W)$ is Jacobi-finite, then $\cc_{Q,W}$ is Karoubian, ($\Hom$-finite,) $2$-Calabi--Yau and contains a canonical cluster-tilting object with endomorphism algebra isomorphic to the Jacobian algebra. In Question~2.20 of \cite{Amiot11}, she asked whether, if the ground field is of characteristic~$0$ and algebraically closed, all triangulated
categories with these properties are equivalent to cluster categories $\cc_{Q,W}$. This is a generalisation of the 
theorem proved by Keller--Reiten in the case where the endomorphism algebra of the cluster-tilting object is hereditary. The question is discussed for example in \cite{KalckYang16}. The fact that the answer should be positive is now known as {\em Amiot's conjecture}, \cf for example Conjecture~1.1 in \cite{KalckYang20}.

There are several known classes where it holds: by a theorem of Crawley--Boevey \cite{CrawleyBoevey00}, the stable category of the preprojective algebra associated with a Dynkin quiver is \linebreak $2$-Calabi--Yau. Moreover, Geiss--Leclerc--Schr\"oer \cite{GeissLeclercSchroeer08b} showed that it contains a canonical cluster-tilting object, \cf also \cite{BuanIyamaReitenScott09}. In 2009, Amiot \cite{Amiot09} proved that the conjecture holds for these examples. In \cite{GeissLeclercSchroeer06, BuanIyamaReitenScott09}, \cf also Theorem~3.23 of \cite{AmiotIyamaReitenTodorov12}, 
they have been generalised as follows:
let $Q$ be an acyclic quiver and $w$ an element of the associated Coxeter group. For each vertex $i$ of the quiver $Q$, let $I_i$ be the two-sided ideal of the completed preprojective algebra $\La_Q$ generated by $1_{\La_Q}-e_i$. Let $w=s_{i_1}\ldots s_{i_l}$ be a reduced expression for $w$ and let $I_w=I_{i_1}\ldots I_{i_l}$. Let $\cc_w=\sub(\La_Q/I_w)$ be the full subcategory of the category of finite-dimensional modules over $\La_Q$ formed by the submodules of finite direct sums of copies of $\La_Q/I_w$. Then $\cc_w$ is a Frobenius exact category whose stable category is $2$-Calabi--Yau and contains a canonical cluster-tilting object. Here as well, the conjecture 
holds as shown in \cite{Amiot09, AmiotReitenTodorov11}. 

There are examples from algebraic geometry as well. Let $R$ be a $3$-dimensional Gorenstein local commutative ring with isolated singularity. Then the stable category of finitely generated maximal Cohen--Macaulay modules over $R$ is ($\Hom$-finite and) $2$-Calabi--Yau by an old theorem of Auslander's \cite{Auslander76}. In certain cases \cite{BurbanIyamaKellerReiten08}, it contains a cluster-tilting object. This holds for example when $R$ is the $G$-invariant ring $k[x_1,x_2,x_3]^G$ for a finite cyclic subgroup $G\subseteq SL_3(k)$. Then the conjecture holds, \cf \cite{AmiotIyamaReiten15, ThanhofferVandenBergh18}. Finally, let us point out that in the case where
the endomorphism algebra of the given cluster-tilting object is monomial, the conjecture was
proved by Garcia Elsener \cite{Garcia21}.

In \cite{Guo11}, Guo generalised Amiot's construction from dimension $2$ to arbitrary dimensions $d\geq 2$, \ie she constructed $d$-Calabi--Yau triangulated categories containing $d$-cluster-tilting objects. She associated these with suitable $(d+1)$-Calabi--Yau dg(=differential graded) algebras. This leads to a natural generalisation of Amiot's conjecture to arbitrary dimensions. In this article, we prove a variant of this generalised conjecture. The main difference between the original statement of the conjecture and the variant we prove concerns the given $d$-Calabi--Yau structure: whereas Amiot supposes that it is a structure on the algebraic triangulated category $\cc$, we assume that it is a $d$-Calabi--Yau structure (in the sense of Kontsevich) on a given dg enhancement $\cc_{dg}$ of $\cc$. Another technical but important difference is that we suppose our categories to be enriched over the category of pseudo-compact vector spaces. Indeed, Davison's examples of Calabi--Yau algebras not coming from potentials \cite{Davison12} suggest that without this assumption, the conjecture might not be true. On the other hand, the variant we prove is more general than the original conjecture insofar as we do not assume the ground field to be algebraically closed.

\subsection{Organisation and outline of this paper}

In section~\ref{section:preliminaries}, we recall basic facts on pseudo-compact vector spaces, augmented algebras and extriangulated categories. In section~\ref{section:pseudo-compact dg algebras}, we prove favourable properties of pseudo-compact dg algebras which will be very useful later on: in Proposition~\ref{prop:complete augmented}, we show that up to quasi-isomorphism, a connective pseudo-compact dg algebra $A$ is augmented and complete if this holds for $H^0(A)$. In Proposition~\ref{prop:minimal cofibrant replacement}, we show that pseudo-compact dg modules over complete $l$-augmented pseudo-compact dg algebras admit minimal cofibrant models. This allows one to detect the perfection of a pseudo-compact dg module by looking at $\RHom_A(M,l)$, \cf Corollary~\ref{cor:perfect}. We deduce that a complete augmented pseudo-compact dg algebra is smooth if and only if its Koszul dual is proper if and only if its perfectly valued derived category is contained in its perfect derived category, \cf Corollary~\ref{cor:smooth}. In Lemma~\ref{lem:forgetful functor}, we show that the category of perfectly valued pseudo-compact dg modules over a smooth pseudo-compact dg algebra embeds fully faithfully into the category of perfectly valued dg modules over the underlying non-topological dg algebra. In Lemma~\ref{lem:pseudo-compact localisation}, we give a sufficient condition for a continuous morphism between pseudo-compact dg algebras to be a pseudo-compact localisation. In section \ref{ss:reminders on Calabi--Yau structures}, we recall the construction of cyclic homology and its variants for (pseudo-compact) dg algebras and give the definitions of left and right $d$-Calabi--Yau structures in the absolute and the relative case. It is well-known that a left Calabi--Yau structure on the perfect dg derived category yields a right Calabi--Yau structure on the perfectly valued dg derived category. In Proposition~\ref{prop:left and right}, we show that for complete augmented pseudo-compact dg algebras, this assignment is bijective. In section~\ref{ss:deformed dg preprojective algebras}, we recall the construction of deformed dg preprojective algebras from \cite{VandenBergh15} and the construction of Ginzburg--Lazaroiu morphisms from \cite{KellerLiu23}. In section~\ref{ss:cluster categories and Higgs categories}, we recall the construction of the cluster category associated with a deformed dg preprojective algebra following \cite{Guo11} and the construction of the relative cluster category and the Higgs category associated with a Ginzburg--Lazaroiu morphism following \cite{Wu23, KellerWu23}.

In section~\ref{ss:htpy short exact sequences}, we recall the definition of homotopy short exact sequences of complexes. Such a sequence yields a long exact sequence in the category of abelian groups, where the connecting morphism can be computed using the homotopy snake lemma, \cf Lemma~\ref{lemma:homotopy snake lemma}. For a full dg subcategory $\cb$ of a dg category $\ca$, the Drinfeld quotient \cite{Drinfeld04} is an explicit description of the dg quotient \cite{Keller99}
of $\ca$ by $\cb$. Using this description we construct a homotopy short exact sequence of dg $\ca$-bimodules, 
\cf Proposition~\ref{prop:htpy short exact sequence of resolutions} and a homotopy short exact sequence of Hochschild complexes, \cf Proposition~\ref{prop:htpy short exact sequence of Hochschild complexes}. In Lemma~\ref{lem:contracting homotopy}, we recall Kontsevich's characterisation of homotopy isomorphisms using contracting homotopies for their mapping cones in a pretriangulated dg category. Then we use these tools to give a (partial but sufficiently precise) description of the connecting morphism in Hochschild homology induced by an exact sequence of dg categories.

\begin{thmx}[=Theorem~\ref{thm:connecting morphism}] \label{thm:A}
Let $\cc=\ca/\cb$ be the Drinfeld quotient of a pretriangulated dg category $\ca$ by a pretriangulated full dg subcategory $\cb$. Let $M$ be the dg $\ca$-bimodule determined by $M(X,Y)=\ca(\nu X,Y)$ associated with a dg endofunctor $\nu$ of $\ca$ taking $\cb$ to itself. Then the connecting morphism
\[
\delta\colon HH_1(\cc,M^\cc)\longrightarrow HH_0(\cb,M_\cb)\: ,
\]
where $M_\cb$ is the restriction of $M$ to $\cb^e$ and $M^\cc$ the induction of $M$ to $\cc^e$, admits a (partial) explicit description.
\end{thmx}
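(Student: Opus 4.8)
The plan is to obtain $\delta$ as the connecting morphism of the long exact sequence in Hochschild homology attached to the homotopy short exact sequence of Hochschild complexes $C(\cb,M_\cb)\to C(\ca,M)\to C(\cc,M^\cc)$ furnished by Proposition~\ref{prop:htpy short exact sequence of Hochschild complexes}, and then to make this connecting morphism explicit by feeding the homotopy snake lemma (Lemma~\ref{lemma:homotopy snake lemma}) the concrete combinatorial model of $\cc=\ca/\cb$ as a Drinfeld quotient. Two features of the situation make such a computation realistic. First, the Drinfeld quotient is \emph{strict}: $\cc$ has the same objects as $\ca$, and $\cc(X,Y)$ is the complex of ``zigzags'', i.e.\ alternating composites of morphisms of $\ca$ and of the adjoined homotopies $h_B\colon B\to B$ of degree $-1$ with $d h_B=\id_B$, one for each object $B$ of $\cb$. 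Second, $\nu$ is a genuine dg endofunctor taking $\cb$ to $\cb$, so it descends strictly to a dg endofunctor $\bar\nu$ of $\cc$ with $\bar\nu(h_B)=h_{\nu B}$; hence $M^\cc$ is represented, up to the quasi-isomorphisms already recorded, by $(X,Y)\mapsto\cc(\bar\nu X,Y)$, compatibly with $M=(X,Y)\mapsto\ca(\nu X,Y)$ and with the inclusion $C(\ca,M)\hookrightarrow C(\cc,M^\cc)$. Under this model, $C(\cc,M^\cc)$ is spanned by tensors of a distinguished ``$M^\cc$-factor'' (a zigzag from $\bar\nu X_n$ to $X_0$) with bar factors (zigzags in $\cc$), the image of $C(\ca,M)$ being the span of those tensors in which no $h_B$ occurs.

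Next I would run the homotopy snake lemma. Given a Hochschild $1$-cycle $z$ representing a class in $HH_1(\cc,M^\cc)$, decompose $z=z_{\mathrm{old}}+z_{\mathrm{new}}$ according to whether the bar and $M^\cc$-factors contain an $h_B$. The chain $z_{\mathrm{old}}$ is the image of a chain $a\in C(\ca,M)$ of the same total degree; it need not be a cycle, and $d a$ together with $d z_{\mathrm{new}}$ is controlled, via the Leibniz rule and the relations $d h_B=\id_B$, by a $0$-chain $b$ of $C(\cb,M_\cb)$ obtained by ``contracting'' each occurrence of $h_B$: applying $d$ to a zigzag $\cdots g\, h_B\, f\cdots$ produces the term $\cdots g\, f\cdots$ with $f$ and $g$ composed through $B$, and collecting these boundary terms across $z$ assembles, after a normalisation using the homotopies in the homotopy short exact sequence, a Hochschild $0$-cycle $b\in C(\cb,M_\cb)$; the homotopy snake lemma then gives $\delta([z])=[b]$. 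Concretely, the part of $\delta$ that I expect to describe completely is its value on classes represented by cycles in which exactly one $h_B$ appears, and there in a single bar factor or in the $M^\cc$-factor: such a cycle breaks at that $h_B$ into an element of $\ca(\nu B,B)=M_\cb(B,B)$ tensored with a bar word that the cycle condition forces to be (homotopic to) the relevant identity contribution, and $\delta$ returns the corresponding class in $HH_0(\cb,M_\cb)$. Kontsevich's characterisation of homotopy isomorphisms via contracting homotopies of mapping cones (Lemma~\ref{lem:contracting homotopy}) is what licenses replacing, inside these computations, the auxiliary homotopies by strict identities whenever a mapping cone in $\ca$ or in $\cb$ is contractible.

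The main obstacle is the combinatorial complexity of the Hochschild complex of a Drinfeld quotient: a general $1$-cycle of $C(\cc,M^\cc)$ is a sum of tensors whose factors are zigzags of unbounded length, and tracking how the differential redistributes the $h_B$'s among the $M^\cc$-factor and the bar factors — together with the signs from the shifts $[1]$ in the bar complex and from the Koszul rule — does not collapse to a closed formula in general. This is precisely why the statement promises only a \emph{partial} explicit description: I would isolate the subspace of $HH_1(\cc,M^\cc)$ spanned by the ``short'' cycles above, which is all that is needed in the applications to Calabi--Yau structures, prove the formula $\delta([z])=[b]$ there by direct bookkeeping, and record compatibility of this computation with the quasi-isomorphisms identifying $M^\cc$ and $M_\cb$ with the induced and restricted bimodules. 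A secondary, more technical, point to get right is that the sequence $C(\cb,M_\cb)\to C(\ca,M)\to C(\cc,M^\cc)$ is only a \emph{homotopy} short exact sequence, so the graded splittings and homotopies entering the homotopy snake lemma must be chosen once and for all on the zigzag model and their contributions carried through; choosing them to be the obvious ``truncate at the first $h_B$'' maps keeps this manageable.
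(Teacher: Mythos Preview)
Your overall framework is correct --- apply the homotopy snake lemma (Lemma~\ref{lemma:homotopy snake lemma}) to the homotopy short exact sequence of Proposition~\ref{prop:htpy short exact sequence of Hochschild complexes} --- but your recipe for the lift $a$ is wrong, and this is exactly where the computation lives.

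You propose to split a cycle as $z=z_{\mathrm{old}}+z_{\mathrm{new}}$ according to whether any $h_B$ occurs, and to take $a=z_{\mathrm{old}}$. For the cycles that actually matter --- the length-$0$ Hochschild chains $c=\Si f\circ\Si\pi\circ h_N\circ\iota$ in $M^\cc(X,X)^{-1}$ coming from a fraction $f\circ s^{-1}$ in the Verdier quotient --- your $z_{\mathrm{old}}$ is zero. The snake-lemma condition $\overline{p(a)+h(b)}=\overline{c}$ then forces $\overline{c}=0$, which is false. The correct lift is obtained not by \emph{deleting} $h_N$ but by \emph{cutting} the composite at $h_N$ so as to raise the bar-length by one: the paper takes the length-$1$ chain $a=\iota\otimes(\Si f\circ\Si\pi)$ in $HH(\ca,M)$ and $b=-\iota\circ\nu\Si f\circ\nu\Si\pi$ in $M_\cb(N,N)$. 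The whole proof is then the single verification
\[
h(b)+p(a)-c \;=\; d\bigl((h_N\circ\iota)\otimes(\Si f\circ\Si\pi)\bigr),
\]
using the explicit homotopy $h$ already written down in Proposition~\ref{prop:htpy short exact sequence of Hochschild complexes}. No general bookkeeping over arbitrary zigzags is needed, and your ``truncate at the first $h_B$'' splitting plays no role.

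You are also missing the input that singles out which cycles to treat. The ``partial explicit description'' in the statement is the commutativity of the square whose top row runs from $H^{-1}\bigl(\coprod_X M^\cc(X,X)\bigr)$ to $H^0\bigl(\coprod_N M_\cb(N,N)\bigr)$; the cycles $c$ are supplied by the calculus of fractions for $H^0(\ca)/H^0(\cb)$ together with Lemma~\ref{lem:contracting homotopy}, which identifies the fraction $f\circ s^{-1}$ with the degree-$(-1)$ zigzag $\Si f\circ\Si\pi\circ h_N\circ\iota$ in the Drinfeld quotient. That lemma is used \emph{before} the snake-lemma step, to produce the cycles; it is not, as you suggest, a tool for simplifying contracting homotopies inside the Hochschild computation.
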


In particular, we lift Amiot's construction of Calabi--Yau structures on certain Verdier quotients to the level of dg enhancements, \cf Remark~\ref{rk:compatibility}. This result was previously announced in \cite{KellerLiu23a}.

For an exact pseudo-compact category with enough projective objects, we show that certain dg subquotients of its dg category of bounded complexes can be endowed with natural structures of pseudo-compact dg categories, \cf Proposition~\ref{prop:pseudo-compact dg categories}. In Theorem~\ref{thm:main}, we state the structure theorem in the absolute case. This result was previously announced in \cite{KellerLiu23a}.

\begin{thmx}[=Theorem~\ref{thm:main}] \label{thm:B}
Under suitable assumptions, for a triangulated category $\cc$ and a positive integer $d$, the following are equivalent.
\begin{itemize}
\item[i)] $\cc$ is triangle equivalent to the cluster category $\cc_\Pi$ associated with a $(d+1)$-dimen\-sional deformed dg preprojective algebra $\Pi$.
\item[ii)] $\cc$ contains a $d$-cluster-tilting object $T$ and admits a dg enhancement $\cc_{dg}$ endowed with a right $d$-Calabi--Yau structure.
\end{itemize}
\end{thmx}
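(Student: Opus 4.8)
\emph{Proof strategy.}
The implication i) $\Rightarrow$ ii) is the ``the construction works'' direction, and is essentially Guo's \cite{Guo11} together with the dg-level lift of Amiot's construction. If $\Pi$ is a $(d{+}1)$-dimensional deformed dg preprojective algebra, then by \cite{VandenBergh15} it is a smooth, complete, $l$-augmented pseudo-compact dg algebra carrying a left $(d{+}1)$-Calabi--Yau structure; Corollary~\ref{cor:smooth} gives $\pvd\Pi\subseteq\per\Pi$, so $\cc_\Pi=\per\Pi/\pvd\Pi$ is defined, and it contains the canonical $d$-cluster-tilting object $\pi\Pi$ by \cite{Guo11}. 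Writing $\ca,\cb$ for the canonical dg enhancements of $\per\Pi$ and $\pvd\Pi$ and presenting the enhancement of $\cc_\Pi$ as the Drinfeld quotient $\ca/\cb$, Proposition~\ref{prop:left and right} turns the left $(d{+}1)$-Calabi--Yau structure on $\ca$ into a right $(d{+}1)$-Calabi--Yau structure on $\cb$, and the dg lift of Amiot's construction --- read off from the explicit description of the connecting morphism in Theorem~\ref{thm:A}, cf.\ Remark~\ref{rk:compatibility} --- then produces a right $d$-Calabi--Yau structure on $\ca/\cb$. One checks that the standing hypotheses hold for $\cc_\Pi$.

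The content of the statement is ii) $\Rightarrow$ i). I would begin by lifting the $d$-cluster-tilting object $T$ to an object $\tilde T$ of $\cc_{dg}$ and forming the dg endomorphism algebra $B=\cc_{dg}(\tilde T,\tilde T)$, which can be taken to be a pseudo-compact dg algebra by Proposition~\ref{prop:pseudo-compact dg categories} and the finiteness in the standing hypotheses. Because a $d$-cluster-tilting object generates, $\cc_{dg}$ is a dg enhancement of $\per B$; and combining the $d$-cluster-tilting property with the Calabi--Yau duality supplied by the given right Calabi--Yau structure, one shows $B$ is connective and proper with $H^0B\cong E:=\End_\cc(T)$. The plan is then to produce a complete, $l$-augmented (with $l=E/\rad E$) pseudo-compact dg algebra $\Pi$ with $H^0\Pi\cong E$, such that the canonical continuous morphism $\Pi\to B$ satisfies the hypotheses of Lemma~\ref{lem:pseudo-compact localisation}; this makes $\Pi\to B$ a pseudo-compact localisation whose kernel is the thick subcategory generated by the simples, so that $\pvd\Pi\hookrightarrow\per\Pi$ and $\per\Pi/\pvd\Pi\simeq\per B\simeq\cc$, with $\pi\Pi\mapsto T$. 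The algebra $\Pi$ itself is to be built from $B$ --- equivalently from $E$ and the $A_\infty$-data of the given Calabi--Yau structure --- by a (deformed) Calabi--Yau-completion-type construction; once it is in hand, Proposition~\ref{prop:complete augmented} (with the minimal model of Proposition~\ref{prop:minimal cofibrant replacement}) shows it is complete and augmented, and Corollary~\ref{cor:smooth} shows it is smooth, its Koszul dual being proper.

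It remains to endow $\Pi$ with a left $(d{+}1)$-Calabi--Yau structure inducing, via Amiot's construction, the prescribed right $d$-Calabi--Yau structure on $\cc_{dg}\simeq\ca/\cb$ (where now $\ca,\cb$ denote the canonical enhancements of $\per\Pi$ and $\pvd\Pi$). This is done by running Amiot's construction backwards: one uses the homotopy short exact sequences of Hochschild complexes of Section~\ref{ss:htpy short exact sequences}, in particular Proposition~\ref{prop:htpy short exact sequence of Hochschild complexes}, and the explicit connecting morphism of Theorem~\ref{thm:A}, to transport the right $d$-Calabi--Yau class on $\ca/\cb$ to a right $(d{+}1)$-Calabi--Yau structure on $\cb$, which Proposition~\ref{prop:left and right} converts into a left $(d{+}1)$-Calabi--Yau structure on $\ca$, i.e.\ on the enhanced $\per\Pi$. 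Since $\Pi$ is now smooth, complete, $l$-augmented and left $(d{+}1)$-Calabi--Yau, it is a $(d{+}1)$-dimensional deformed dg preprojective algebra by \cite{VandenBergh15}, and hence $\cc$ is triangle equivalent to $\cc_\Pi$ (indeed, equivalently enhanced and compatibly with the Calabi--Yau structures).

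The hard part is this backwards transport of the Calabi--Yau structure. One must understand the connecting morphism of the exact sequence of dg categories $\cb\to\ca\to\cc_{dg}$ not merely in Hochschild homology but in negative cyclic homology, compatibly with the Connes differential, and precisely enough to single out the right $(d{+}1)$-Calabi--Yau class on $\cb$ that produces the given class on $\cc_{dg}$ --- which is exactly what the (partial but sufficiently precise) description of the connecting morphism in Theorem~\ref{thm:A} is designed to provide. It is also the step at which the enrichment over pseudo-compact vector spaces cannot be dispensed with: by Davison's examples \cite{Davison12} of Calabi--Yau algebras not coming from potentials, without it the reconstructed $\Pi$ need not be a deformed dg preprojective algebra, and Theorem~\ref{thm:B} would fail.
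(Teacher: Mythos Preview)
Your outline for i) $\Rightarrow$ ii) is correct and matches the paper's argument.

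For ii) $\Rightarrow$ i) there is a genuine gap. You propose to take $B=\cc_{dg}(\tilde T,\tilde T)$, the derived endomorphism algebra of $T$ \emph{in the quotient} $\cc_{dg}$, and assert that $B$ is connective. This is false: by the right $d$-Calabi--Yau structure, $H^d(B)\simeq\cc(T,\Si^d T)\simeq D\cc(T,T)\neq 0$, so $B$ has non-trivial cohomology in positive degree $d$ (and, dually, its negative-degree cohomology is not controlled either). Connectivity of the auxiliary algebra is exactly what makes the connecting morphism $\delta$ in cyclic homology an \emph{isomorphism}, so that one can invert it and transport the Calabi--Yau class; with your $B$ this step has no content. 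Your proposed ``Calabi--Yau-completion-type construction'' of $\Pi$ from $B$ is left unspecified, and there is no ``canonical continuous morphism $\Pi\to B$'' until $\Pi$ has actually been built.

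The paper avoids this by working one level up, using the Frobenius model $\ce$ that is part of the standing hypotheses. One forms $\cm=\add(T\oplus\cp)\subseteq\ce$ and takes $\ol{\Ga}$ to be the endomorphism algebra of $T$ in the dg quotient $\cc^b_{dg}(\cm)/\cc^b_{dg}(\cp)$. By Kalck--Yang \cite{KalckYang20}, $T$ is \emph{silting} there, so $\ol{\Ga}$ is connective; Palu's diagram then yields the exact sequence $\pvd_{dg}\ol{\Ga}\to\per_{dg}\ol{\Ga}\to\cc_{dg}$. Connectivity forces $HC_{-d}(\per_{dg}\ol{\Ga})=HC_{-d-1}(\per_{dg}\ol{\Ga})=0$, so $\delta$ is bijective, and the preimage of the given right $d$-Calabi--Yau class is a candidate right $(d{+}1)$-Calabi--Yau class on $\pvd_{dg}\ol{\Ga}$. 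Its non-degeneracy is not automatic: it is established (Proposition~\ref{prop:detect non-degeneracy}) by comparing, via Theorem~\ref{thm:connecting morphism}, the induced trace forms with those supplied by Keller--Reiten \cite{KellerReiten07} from the $d$-Calabi--Yau structure on $\cc$. Only then does one apply Proposition~\ref{prop:left and right} and Van den Bergh's theorem to conclude that $\ol{\Ga}$ itself is (quasi-isomorphic to) the desired $\Pi$. Your sketch omits the Frobenius/silting input entirely and so lacks any mechanism for producing a connective pseudo-compact dg algebra sitting over $\cc_{dg}$.
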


Then we give an example to illustrate the construction of the deformed dg preprojective algebra associated with a species with potential, \cf Example~\ref{ex:example}. In section~\ref{ss:proof from i) to ii) in main}, we prove the relatively easy implication from i) to ii) in Theorem~\ref{thm:B}. The existence of a (canonical) $d$-cluster-tilting object was already proved by
Amiot for $d=2$ and Guo for $d\geq 2$. For $d=1$, up to derived Morita equivalence, the $2$-dimensional dg preprojective algebra $\Pi$ is associated with a species $S$ of Dynkin type. Then the existence of a (canonical) $1$-cluster-tilting object follows from a triangle equivalence $\cd^b(A)/\tau \iso \cc_\Pi$, where $A$ is the finite-dimensional hereditary $k$-algebra associated with $S$. We construct the right $d$-Calabi--Yau structure on $\cc_{\Pi,dg}$ as follows: in a first step, from Van den Bergh's left $(d+1)$-Calabi--Yau structure on $\Pi$, we obtain a right $(d+1)$-Calabi--Yau structure on its Koszul dual, \cf Proposition~\ref{prop:left and right}. Using the dual connecting morphism associated with the defining sequence of $\cc_\Pi$ we obtain a candidate for a right $d$-Calabi--Yau structure on $\cc_{\Pi,dg}$. To prove that it is indeed non-degenerate, it suffices to do a check at the level of linear forms on Yoneda algebras in the triangulated category $\cc_\Pi$. Thanks to the description of the connecting morphism in Hochschild homology, \cf Theorem~\ref{thm:A}, this reduces to the fact that Amiot's construction yields a $d$-Calabi--Yau structure on the triangulated category $\cc_\Pi$, which was shown by Amiot and by Guo.

In section~\ref{ss:proof from ii) to i) in main}, we prove the relatively hard implication from ii) to i) in Theorem~\ref{thm:B}. In the proof, following Kalck--Yang \cite{KalckYang20}, we first use Palu's diagram \cite{Palu09a} to construct a complete augmented pseudo-compact dg algebra $\ol{\Ga}$ which is connective and smooth such that $\cc_{dg}$ becomes the dg quotient of $\per\!_{dg}\ol{\Ga}$ by $\pvd\!_{dg}\ol{\Ga}$. Our aim is to endow $\ol{\Ga}$ with a left $(d+1)$-Calabi--Yau
structure $[\tilde{\xi}]$ in order to apply Van den Bergh's theorem \cite{VandenBergh15} to $\ol{\Ga}$. The long exact sequence in cyclic homology combined with the passage to the Koszul dual using Proposition~\ref{prop:left and right} yields a candidate for $[\tilde{\xi}]$. In order to show that it is indeed non-degenerate, it suffices to show that the dual connecting morphism detects non-degeneracy, \cf Proposition~\ref{prop:detect non-degeneracy}. In the proof of this proposition, we compare the linear forms on $\pvd \ol{\Ga}$ with the ones in part~c) of Theorem~5.4 of \cite{KellerReiten07} which give rise to a $d$-Calabi--Yau structure on the triangulated category $\cc$. Thanks to Theorem~\ref{thm:A}, they coincide. Finally, we use Theorem~9.4 of Van den Bergh's \cite{VandenBergh15} to conclude that $\ol{\Ga}$ is indeed quasi-isomorphic to a $(d+1)$-dimensional deformed dg preprojective algebra. It follows that $\cc$ is triangle equivalent to the corresponding cluster category.

Section~\ref{section:relative} is devoted to a relative version of Theorem~\ref{thm:B}. In this version, the role of the cluster category $\cc_\Pi$ is played by the {\em Higgs category} $\ch_\gamma$ associated with a Ginzburg--Lazaroiu morphism $\gamma$ by Wu \cite{Wu23} and Keller--Wu \cite{KellerWu23}. In section~\ref{ss:assumptions}, we introduce the assumptions which are used in part~ii) of Theorem~\ref{thm:relative}. In Theorem~\ref{thm:relative}, we state the structure theorem in the relative case.

\begin{thmx}[=Theorem~\ref{thm:relative}] \label{thm:C}
Under suitable assumptions, for a Frobenius exact category $\ce$ with the full subcategory $\cp$ of projective-injective objects and a positive integer $d$, the following are equivalent.
\begin{itemize}
\item[i)] $\ce$ is equivalent, as an exact category, to the Higgs category $\ch_\gamma$ associated with a $(d+1)$-dimensional Ginzburg--Lazaroiu morphism $\gamma$.
\item[ii)] $\ce$ contains a $d$-cluster-tilting object $M$ and the inclusion functor $\ch^b(\cp) \to \cd^b(\ce)$ identifies with the inclusion of the image of $H^0(F)$ for a continuous dg functor $F\colon \tilde{\cp}\to \cd^b_{dg}(\ce)$ endowed with a left relative $(d+1)$-Calabi--Yau structure which yields a right $d$-Calabi--Yau structure on the dg quotient $\cc_{dg}=\cd^b_{dg}(\ce)/ \cd^b_{dg}(\cp)$.
\end{itemize}
\end{thmx}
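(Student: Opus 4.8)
The plan is to run both implications in close parallel with the proof of Theorem~\ref{thm:B}, systematically replacing the cluster category $\cc_\Pi$ by the Higgs category $\ch_\gamma$, the deformed dg preprojective algebra $\Pi$ by the Ginzburg--Lazaroiu morphism $\gamma$, and the absolute left/right Calabi--Yau structures of section~\ref{ss:reminders on Calabi--Yau structures} by their relative counterparts. Throughout, one works not with a single pseudo-compact dg algebra but with a continuous morphism $\ol{G}\colon\ol{\La}\to\ol{\Ga}$ from a ``frozen'' algebra $\ol{\La}$ to a ``full'' algebra $\ol{\Ga}$. Since $\ce$ is Frobenius exact, $H^0(\cc_{dg})=\cd^b(\ce)/\ch^b(\cp)$ is the stable category $\underline{\ce}$, which is a triangulated category playing, relative to $\ol{\Ga}$, the role that $\cc$ plays in Theorem~\ref{thm:B}; the new difficulty over the absolute case is that recovering the exact structure of $\ce$ from $\underline{\ce}$ forces one to carry the frozen data $\ol{\La}$ along at every step.

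\emph{From i) to ii).} Suppose $\ce$ is exact-equivalent to $\ch_\gamma$ for a $(d+1)$-dimensional Ginzburg--Lazaroiu morphism $\gamma$. By Wu's construction \cite{Wu23, KellerWu23}, the Higgs category $\ch_\gamma$ carries a canonical $d$-cluster-tilting object $M$, and the inclusion $\ch^b(\cp)\to\cd^b(\ce)$ has the shape described in ii), with $F$ the relative Koszul dual of $\gamma$. The Ginzburg--Lazaroiu morphism is, by its very definition, endowed with a left relative $(d+1)$-Calabi--Yau structure; transporting it through the relative counterpart of Proposition~\ref{prop:left and right} yields a right relative $(d+1)$-Calabi--Yau structure on $F$, and pushing it along the dual of the relative connecting morphism of the defining sequence of $\cc_\Pi$ --- where one invokes the long exact sequence in relative cyclic homology together with Theorem~\ref{thm:A} --- produces the candidate right $d$-Calabi--Yau structure on $\cc_{dg}$. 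Its non-degeneracy reduces, exactly as in the absolute case, to a check on linear forms on the relevant Yoneda algebras in $\underline{\ce}$, which is precisely the statement, proved by Wu, that the stable category of a Higgs category carries a $d$-Calabi--Yau structure.

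\emph{From ii) to i).} This is the hard direction. Starting from $M$ and the relative Calabi--Yau data, I would first use a relative variant of Palu's diagram \cite{Palu09a}, in the spirit of Kalck--Yang \cite{KalckYang20}, to build a continuous morphism of complete augmented connective pseudo-compact dg algebras $\ol{G}\colon\ol{\La}\to\ol{\Ga}$ which is relatively smooth, through which $F$ factors up to the appropriate dg quotients, and such that $\cc_{dg}$ becomes $\per\!_{dg}\ol{\Ga}\,/\,\pvd\!_{dg}\ol{\Ga}$ compatibly with $\ol{\La}$. Combining the long exact sequence in relative cyclic homology with the relative Koszul duality then provides a candidate left relative $(d+1)$-Calabi--Yau structure $[\tilde{\xi}]$ on $\ol{G}$. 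To prove $[\tilde{\xi}]$ non-degenerate I would establish a relative analogue of Proposition~\ref{prop:detect non-degeneracy}, reducing the question to matching the induced linear forms on $\pvd\ol{\Ga}$ and on the frozen subcategory with the forms furnished by the relative version of part~c) of Theorem~5.4 of \cite{KellerReiten07}, which give a $d$-Calabi--Yau structure on the triangulated category $\underline{\ce}$; these agree by Theorem~\ref{thm:A}. Finally, a relative version of Theorem~9.4 of Van den Bergh's \cite{VandenBergh15} shows that $\ol{G}$ is quasi-isomorphic to a $(d+1)$-dimensional Ginzburg--Lazaroiu morphism $\gamma$, and Wu's reconstruction of the Higgs category from $\gamma$ then identifies $\ce$, as an exact category, with $\ch_\gamma$.

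The main obstacle is the relative non-degeneracy argument in ii)$\Rightarrow$i): one must control simultaneously the ``bulk'' linear forms on $\pvd\ol{\Ga}$ and the ``boundary'' contributions coming from the frozen subcategory $\ol{\La}$, and check that the relative connecting morphism in cyclic homology matches them against the relative Calabi--Yau data on $F$. This is where the relative form of Theorem~\ref{thm:A} and a careful relative strengthening of Proposition~\ref{prop:detect non-degeneracy} must be combined, and where the extriangulated rather than triangulated nature of the Higgs category demands extra care beyond the absolute case; a secondary obstacle is pinning down the relative analogue of Van den Bergh's structure theorem in a form directly applicable to $\ol{G}$.
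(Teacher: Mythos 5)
Your overall architecture matches the paper's: both implications are run in parallel with Theorem~\ref{thm:B}, the candidate class is produced by combining the long exact sequence in (relative) cyclic homology with Koszul duality, and the structure theorem invoked at the end is exactly the relative analogue of Van den Bergh's theorem, namely Theorem~4.3.1 of \cite{KellerLiu23}. However, there are two concrete problems. First, in i)$\Rightarrow$ii) you propose to obtain the Calabi--Yau datum on $F$ by ``transporting through the relative counterpart of Proposition~\ref{prop:left and right}'', producing a \emph{right} relative $(d+1)$-Calabi--Yau structure on $F$. But statement ii) requires a \emph{left} relative structure on $F$: the target $\cd^b_{dg}(\ce)$ is smooth (not proper), so the left/right dictionary of Proposition~\ref{prop:left and right} is not the relevant tool here. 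The paper instead pushes the left relative structure on $\gamma\colon\Pi_F\to\Pi$ forward along the pseudo-compact localisation $\per\!_{dg}\Pi\to\cd^b_{dg}(\ch_\gamma)$ (the variant of part~(c) of Proposition~3.10 of \cite{Keller11b}), and uses the Koszul-dual passage to right structures only on the homotopy cofibre $\per\!_{dg}(\Pi/(l_F))$, via the variant of Corollary~7.1 of \cite{BravDyckerhoff19}, to get the right $d$-Calabi--Yau structure on $\cc_{dg}$.

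Second, in ii)$\Rightarrow$i) you correctly identify the relative non-degeneracy of $[\tilde{\xi}]$ as the main obstacle, but you do not supply the idea that resolves it, and ``matching bulk and boundary linear forms'' is not how the paper proceeds. The mechanism is to test the induced morphism of $\per\!_{dg}\Ga$-bimodules $\Si^{d+1}\cone(\mu)^\vee\to\per\!_{dg}\Ga$ against all pairs $(L,M)$ via $L\lten\,?\lten M$, and to use the semi-orthogonal decomposition of $\cd(\per\!_{dg}\Ga)$ coming from the recollement $\pvd\!_{dg}\ol{\Ga}\to\per\!_{dg}\Ga\to\cd^b_{dg}(\ce)$ to split into two cases: when $L$ and $M$ are restricted from $\cd^b_{dg}(\ce)$, the given non-degeneracy of the left relative structure on $F$ does the work (using that $\per\!_{dg}\Ga\to\cd^b_{dg}(\ce)$ is a pseudo-compact localisation, so $\mu$ there is invertible); when one of them is induced from $\pvd\!_{dg}\ol{\Ga}$, the bimodule dual collapses and the claim reduces verbatim to the absolute Proposition~\ref{prop:detect non-degeneracy}. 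Without this reduction your sketch does not close. Two further points you elide: the candidate $[\tilde{\xi}]$ exists because the canonical map $HN_{d+1}(\per\!_{dg}\Ga,\tilde{\cp})\to HN_{d+1}(\cd^b_{dg}(\ce),\tilde{\cp})$ is \emph{bijective} (Proposition~\ref{prop:bijectivity}, which itself needs Lemmas~\ref{lem:exact sequence of Koszul duals} and \ref{lem:augmented}); and the final identification of $\ce$ with $\ch_\gamma$ as exact categories is not automatic from ``Wu's reconstruction'' but requires the explicit tower-of-triangles characterisation of the objects of $\ch_\gamma$ together with the $d$-cluster-tilting property of $M$ in $\ce$ (via the exact-category variant of Proposition~2.4.1 of \cite{Iyama07}).
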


We first prove some auxiliary lemmas: in Lemma~\ref{lem:exact sequence of Koszul duals}, we give a sufficient condition on an exact sequence of dg categories to yield an exact sequence linking their perfectly valued dg derived categories. In Lemma~\ref{lem:augmented}, we prove that the pseudo-compact derived endomorphism algebra of the cluster-tilting object $M$ is augmented and complete, which allows us to take the Koszul dual. In Proposition~\ref{prop:bijectivity}, we use this to show that under the assumptions of part~ii) in Theorem~\ref{thm:C}, the canonical map (\ref{eq:canonical map}) in relative negative cyclic homology is bijective.

In section~\ref{ss:proof from i) to ii) in relative}, we prove the relatively easy implication from i) to ii) in Theorem~\ref{thm:C}. The existence of a (canonical) $d$-cluster-tilting object in $\ce$ follows from that in the stable category of $\ce$. We define the pseudo-compact dg category $\tilde{\cp}$ to be $\per\!_{dg}\Pi_F$. From Keller--Liu's left relative $(d+1)$-Calabi--Yau structure on $\gamma\colon \Pi_F \to \Pi$, we obtain a left relative $(d+1)$-Calabi--Yau structure $[\tilde{\xi}]$ on the continuous dg functor $\per\!_{dg}\Pi_F \to \per\!_{dg}\Pi$ induced by $\gamma$. We construct the left relative $(d+1)$-Calabi--Yau structure on $F$ and the right \linebreak $d$-Calabi--Yau structure on $\cc_{dg}$ via $[\tilde{\xi}]$ as follows: for the first one, since the continuous dg functor $\per\!_{dg}\Pi \to \cd^b_{dg}(\ch_\gamma)$ is a pseudo-compact localisation, by the variant of part~(c) of Proposition~3.10 of \cite{Keller11b} for pseudo-compact dg categories, the class $[\tilde{\xi}]$ yields a left relative $(d+1)$-Calabi--Yau structure on $F$. For the second one, by the variant of Corollary~7.1 of \cite{BravDyckerhoff19} for pseudo-compact dg categories, the class $[\tilde{\xi}]$ yields a left $(d+1)$-Calabi--Yau structure on the homotopy cofibre of $\per\!_{dg}\Pi_F \to \per\!_{dg}\Pi$. Then by the proof of the implication from i) to ii) in Theorem~\ref{thm:B}, the left $(d+1)$-Calabi--Yau structure on the homotopy cofibre yields a right $d$-Calabi--Yau structure on the dg category $\cc_{dg}$.

In section~\ref{ss:proof from ii) to i) in relative}, we prove the relatively hard implication from ii) to i) in Theorem~\ref{thm:C}. In the proof, we use Palu's diagram \cite{Palu09a} to construct a continuous dg functor $\tilde{F}\colon \tilde{\cp}\to \per\!_{dg}\Ga$ such that $F$ becomes the composition of the pseudo-compact localisation $\per\!_{dg}\Ga \to \cd^b_{dg}(\ce)$ with $\tilde{F}$. Our aim is to endow $\tilde{F}$ with a left relative $(d+1)$-Calabi--Yau structure $[\tilde{\xi}]$ in order to apply Keller--Liu's structure theorem \cite{KellerLiu23} to $\tilde{F}$. The bijectivity of the canonical map (\ref{eq:canonical map}) yields a candidate for $[\tilde{\xi}]$. In order to show that it is indeed non-degenerate, we consider the induced morphism of pseudo-compact dg $\per\!_{dg}\Ga$-bimodules and show that its images under the derived pseudo-compact tensor functors with arbitrary right modules $L$ and left modules $M$ are isomorphisms. By the semi-orthogonal decomposition of $\cd(\per\!_{dg}\Ga)$, it reduces to two cases: a) the modules $L$ and $M$ are restricted from $\cd^b_{dg}(\ce)$ or b) the module $L$ or $M$ is induced from $\pvd\!_{dg}(\ol{\Ga})$. In case a), the claim follows because we have the given left relative $(d+1)$-Calabi--Yau structure on $F$. In case b), the induced module vanishes on $\tilde{\cp}$ so the claim reduces to the absolute case. In this case, it follows from the proof of Proposition~\ref{prop:detect non-degeneracy}. Finally, we use Theorem~4.3.1 of Keller--Liu's \cite{KellerLiu23} to conclude that $\Ga$ is indeed quasi-isomorphic to a $(d+1)$-dimensional deformed relative dg preprojective algebra. It follows that $\cd^b(\ce)$ is triangle equivalent to the corresponding relative cluster category. Upon inspection, we see that this equivalence induces an equivalence of exact categories between the Higgs category $\ch_\gamma$ and $\ce$.

\subsection*{Acknowledgements}

The authors thank Damien Calaque for pointing out
an error in a previous version of the crucial Proposition~\ref{prop:htpy short exact sequence of resolutions}.
The first-named author is grateful to Claire Amiot and Dong Yang for stimulating discussions and help with the references. 

The second-named author is supported by the Chinese Scholarship Council (CSC, Grant No.~202006210272). 
This article is part of his Ph.~D.~thesis defended at the Universit\'e Paris Cit\'e on December 15, 2023.

\section{Notations}

The following notations are used throughout the article: we let $k$ be a field. For a $k$-vector space $V$, we denote its $k$-dual space $\Hom_k(V,k)$ by $DV$. By abuse of notation, following \cite{VandenBergh15}, we write $a=a'\ten a''$ for an element $a=\sum_i a'_i\ten a''_i$ of a tensor product. Unless we specify otherwise, algebras have units but morphisms between algebras do not necessarily preserve the units. Modules are unital right modules. The internal degree of a homogeneous element $a$ in a graded vector space is denoted by $|a|$. We denote the shift functor of graded vector spaces by $\Si$ and write $\si \colon A\to \Si A$ for the canonical map of degree $-1$. We use cohomological grading so that differentials are of degree $1$. For any dg algebra, we denote its differential by $d$. We write $A^e$ for the enveloping algebra $A\ten_k A\op$ of any (dg) $k$-algebra $A$. For a graded $l$-bimodule $V$, we write $T_l V$ for the completed graded tensor algebra $\prod_{p\geq 0}V^{\ten_l p}$. We denote by $A^!$ the Koszul dual of a complete augmented pseudo-compact dg algebra $A$ in the sense of section~12.5 of \cite{VandenBergh15}.

\section{Preliminaries} \label{section:preliminaries}

\subsection{Pseudo-compact objects}

Following section~3 of \cite{VandenBergh15}, \cf also section~IV.3 of \cite{Gabriel62}, a {\em pseudo-compact vector space} is a topological vector space $V$ which has a basis of neighbourhoods of $0$ formed by distinguished subspaces of finite codimension such that $V$ is isomorphic to the inverse limit of the system formed by the quotients $V/V'$, where $V'$ runs through the open subspaces. A finite-dimensional vector space endowed with the discrete topology is a pseudo-compact vector space and conversely the topology on a finite-dimensional pseudo-compact vector space must be the discrete topology. Denote the category of pseudo-compact $k$-vector spaces by $\PC k$. Then we have a duality $\D \colon (\Mod k)\op \to \PC k$ which sends $V$ to its $k$-dual $DV=\Hom_k(V,k)$ endowed with the topology having a basis of neighbourhoods of $0$ formed by the kernels of the restriction maps $DV\to DV'$, where $V'$ runs through the finite-dimensional subspaces of $V$. Its quasi-inverse $\D \colon (\PC k)\op \to \Mod k$ sends $W$ to the $k$-vector space formed by the continuous $k$-linear maps from $W$ to $k$. The category $\PC k$ has a monoidal structure which is given by
\[
V\ten_k W=\D(\D W\ten_k \D V)
\]
for any $V$ and $W$ in $\PC k$. A {\em pseudo-compact graded vector space} is a graded vector space, where each component is endowed with a topology making it into a pseudo-compact vector space. The category of pseudo-compact graded $k$-vector spaces also has a monoidal structure as follows. For any pseudo-compact graded $k$-vector spaces $V$ and $W$, the component of degree $n$ of $V\ten_k W$ is given by
\[
\prod_{i+j=n}V_i\ten_k W_j \: .
\]
Following \cite{Gabriel62, VandenBergh01, KellerYang11}, a {\em pseudo-compact algebra} is a topological algebra $A$ which has a basis of neighbourhoods of $0$ formed by distinguished right ideals of finite codimension such that $A$ is isomorphic to the inverse limit of the system formed by the quotients $A/I$ as a right $A$-module, where $I$ runs through the open right ideals. Equivalently, it is a pseudo-compact vector space endowed with a continuous multiplication. Recall from part~(1) of Lemma~3.1 of \cite{VandenBergh15} that a pseudo-compact algebra also has a basis of neighbourhoods of $0$ formed by distinguished two-sided ideals of finite codimension. If we replace algebras by modules and right ideals by submodules, then we obtain the definition of {\em pseudo-compact modules}. If we replace algebras by graded algebras and right ideals by graded right ideals, then we obtain the definition of {\em pseudo-compact graded algebras}. A {\em pseudo-compact dg algebra} is a pseudo-compact graded algebra endowed with a continuous differential satisfying the graded Leibniz rule. Similarly, one can define {\em pseudo-compact dg modules}.

\subsection{Reminders and complements on pseudo-compact dg categories}

Let $A$ be a pseudo-compact dg $k$-algebra. We define $\cc(A)$ respectively $\ch(A)$ to be the category of pseudo-compact dg $A$-modules respectively the homotopy category and consider them as enriched over the category of $k$-vector spaces (not pseudo-compact $k$-vector spaces).
We write $\cd(A)$ for the (unbounded) derived category in the sense of section~5 of
\cite{VandenBergh15}. So the objects in $\cd(A)$ are the pseudo-compact dg $A$-modules
and its morphisms are obtained from morphisms of pseudo-compact dg $A$-modules by
localising with respect to a suitable class of weak equivalences (which is usually strictly contained in the class of quasi-isomorphisms). We consider $\cd(A)$ as enriched over the category of $k$-vector spaces (not pseudo-compact \linebreak $k$-vector spaces). 
Its thick subcategory generated by the free dg $A$-module of rank one is the {\em perfect derived category} $\per A$. Notice that usually, it does not consist of compact objects in $\cd(A)$.
We define the {\em perfectly valued derived category} $\pvd A$ to be the full subcategory of the 
{\em perfectly valued dg modules} in $\cd(A)$, \ie those dg modules $M$ whose homology is of finite total dimension. Clearly, an object $M$ of $\cd(A)$ belongs to $\pvd A$ if and only if the object $\RHom_A(A,M)$ belongs to $\per k=\pvd k$.
Denote the canonical dg enhancements of $\cd(A)$, $\per A$, $\pvd A$ by $\cd_{dg}(A)$, 
$\per\!_{dg}A$, $\pvd\!_{dg}A$ respectively.
Notice that one can also consider the above categories as enriched over the category of pseudo-compact $k$-vector spaces. We denote the pseudo-compact morphism space between objects $M$ and $N$ 
in $\cc(A)$ by $\Hom_A^{pc}(M,N)$. Its underlying vector space is $\Hom_A(M,N)$ and its topology is given by the basis of neighbourhoods of $0$ formed by the $\Hom_A(M,N')$, where $N'$ runs through the open dg $A$-submodules of $N$. Similarly for $\cd(A)$ and $\cd_{dg}(A)$.

Recall that $A$ is {\em connective} if its homology $H^p(A)$ vanishes for all $p>0$. 
In this case, the derived category $\cd(A)$ has a canonical t-structure whose aisles are
\begin{align*}
\cd(A)^{\leq 0} &= \{M\in \cd(A)\mid H^p(M)=0\mbox{ for all }p>0\}\mbox{ and}\\
\cd(A)^{\geq 0} &= \{M\in \cd(A)\mid H^p(M)=0\mbox{ for all }p<0\}\: .
\end{align*}
Its heart is equivalent to the module category of $H^0(A)$. The dg algebra $A$ is a {\em stalk algebra} if its homology $H^p(A)$ vanishes for all $p\ne 0$. In this case, we have the quasi-isomorphisms
\[
\begin{tikzcd}
A  & \tau_{\leq 0}(A) \arrow[swap]{l}{\sim} \arrow{r}{\sim} & H^0(A)
\end{tikzcd}
\]
so that $A$ is quasi-isomorphic to the ordinary algebra $H^0(A)$.
 
We write $A^e$ for the {\em enveloping algebra} $A\ten_k A\op$. Recall that $A$ is {\em smooth} if 
$A$ is perfect in $\cd(A^e)$ and that $A$ is {\em proper} if its underlying complex lies in $\per k$. Equivalently, $A$ is proper if and only if its homology $H^p(A)$ is finite-dimensional for all integers $p$ and vanishes for all $|p|\gg 0$. The dg algebra $A$ is {\em homologically finite} if its homology $H^p(A)$ is finite-dimensional for all integers $p$.

A {\em pseudo-compact category} is a category enriched over the monoidal category of pseudo-compact vector spaces. It is {\em (Frobenius) exact} if its underlying category is endowed with a (Frobenius) exact structure.
A {\em pseudo-compact dg category} is a category enriched over the monoidal category of
complexes of pseudo-compact vector spaces. It is {\em pretriangulated} if its underlying
dg category is pretriangulated. A {\em continuous dg functor} between pseudo-compact dg categories is a dg functor which induces continuous maps between morphism spaces.

\subsection{Augmented algebras and morphisms} \label{ss:morphisms between pc algebras}

Suppose that $l$ is a finite-dimensional semisimple $k$-algebra. An {\em $l$-augmented dg algebra} is a dg $k$-algebra $A$ endowed with dg $k$-algebra morphisms
\[
\begin{tikzcd}
l \arrow{r}{\eta} & A \arrow{r}{\eps} & l
\end{tikzcd}
\]
satisfying $\eps \circ \eta =\id_{l}$, \cf section~5 of \cite{VandenBergh15}. We call $\eta$ the unit map and $\eps$ the augmentation map. They yield an $l$-bimodule isomorphism $A\simeq l\oplus \ol{A}$, where $\ol{A}$ denotes the the kernel of $\eps$. Similarly for $l$-augmented dg coalgebras. Suppose that $l_A$ and $l_B$ are finite-dimensional semisimple $k$-algebras and $\phi \colon l_B \to l_A$ is a morphism between $k$-algebras (not necessarily preserving the unit!). A {\em $\phi$-augmented morphism between dg algebras} is a morphism \linebreak $f\colon B \to A$ between dg $k$-algebras (not necessarily preserving the unit!) fitting into a commutative diagram
\[
\begin{tikzcd}
l_B \arrow{r}{\eta_B} \arrow[swap]{d}{\phi} & B \arrow{r}{\eps_B} \arrow{d}{f} & l_B \arrow{d}{\phi} \\
l_A \arrow[swap]{r}{\eta_A} & A \arrow[swap]{r}{\eps_A} & l_A\mathrlap{\: .}
\end{tikzcd}
\]
If the above dg algebras are pseudo-compact and morphisms are continuous, we obtain the definition of {\em augmented pseudo-compact dg algebras} and {\em augmented morphisms between pseudo-compact dg algebras}.
For a pseudo-compact dg algebra $A$, its {\em radical} $\rad A$ is defined to be the common annihilator of all the simple pseudo-compact dg $A$-modules. An \linebreak $l$-augmented pseudo-compact dg algebra $A$ is {\em complete} if $\ol{A}$ equals the radical $\rad A$. This holds if and only if the augmented dg coalgebra $C=\D A$ is {\em cocomplete}, \ie $\ol{C}$ is the union of the kernels of the iterated comultiplications $\ol{C}\to \ol{C}^{\ten_l n}$, $n\geq 2$. We endow the category of cocomplete $l$-augmented dg coalgebras with the model category structure described in section~1.3.1 of \cite{Lefevre03}, \cf also section~12.1 of \cite{VandenBergh15}.
Following section~12.4 of \cite{VandenBergh15}, we endow the category of complete $l$-augmented pseudo-compact dg algebras with the dual model category structure. We endow the category of cocomplete dg 
comodules over a cocomplete $l$-augmented dg coalgebra with the model structure of 
section~1.3.1 of \cite{Lefevre03}, \cf also section~12.2 of \cite{VandenBergh15}. Following
section~12.4 of \cite{VandenBergh15}, we endow the category of pseudo-compact dg modules over a complete $l$-augmented pseudo-compact dg algebra with the dual model category structure.

\subsection{Reminders on extriangulated categories}

Following section~2 of \cite{NakaokaPalu19}, the notion of extriangulated category generalises both the notion of exact category and that of triangulated category. It arises naturally in the categorification of cluster algebras with coefficients, \cf \cite{Wu23, Wu21, KellerWu23}.

Let $\ce$ be an additive category and $\mathbb{E}\colon \ce\op \times \ce \to \Mod\Z$ a biadditive bifunctor. We call the elements $\delta$ of $\mathbb{E}(Z,X)$ {\em extensions} of $Z$ by $X$. A {\em morphism of extensions} from $\delta \in \mathbb{E}(Z,X)$ to $\delta' \in \mathbb{E}(Z',X')$ is a pair $(u,w)\in \ce(X,X')\times \ce(Z,Z')$ satisfying $u_*(\delta)=w^*(\delta')$. Two sequences of morphisms
\[
\begin{tikzcd}
X\arrow{r}{i} & Y\arrow{r}{p} & Z
\end{tikzcd}
\quad \mbox{and} \quad
\begin{tikzcd}
X\arrow{r}{i'} & Y'\arrow{r}{p'} & Z
\end{tikzcd}
\]
are {\em equivalent} if there is a morphism $v\colon Y\to Y'$ such that the diagram
\[
\begin{tikzcd}
X\arrow{r}{i}\arrow[equal]{d} & Y\arrow{r}{p}\arrow{d}{v} & Z\arrow[equal]{d} \\
X\arrow[swap]{r}{i'} & Y'\arrow[swap]{r}{p'} & Z
\end{tikzcd}
\]
commutes. A {\em realisation} for $\mathbb{E}$ is a family $s$ of maps
\[
\begin{tikzcd}
s_{Z,X}\colon \mathbb{E}(Z,X)\arrow{r} & \{\mbox{equivalence class of sequences }X\xlongrightarrow{i} Y\xlongrightarrow{p} Z\}\ko X, Z\in \ce \: .
\end{tikzcd}
\]
The sequences
\[
\begin{tikzcd}
X\arrow{r}{i} & Y\arrow{r}{p} & Z
\end{tikzcd}
\]
in the image of $s$ are called {\em extriangles} or {\em conflations}, the morphism $i$ {\em inflations} and the morphism $p$ {\em deflations}. We write
\[
\begin{tikzcd}
X\arrow{r}{i} & Y\arrow{r}{p} & Z\arrow[dashed]{r}{\delta} & \mbox{}
\end{tikzcd}
\]
to indicate that $s(\delta)=(i,p)$. A realisation $s$ is {\em additive} if
\begin{itemize}
\item[a)] for each morphism of extensions $(u,w)\colon \delta \to \delta'$, there is a morphism $v$ fitting into the commutative diagram
\[
\begin{tikzcd}
X\arrow{r}\arrow{d}{u} & Y\arrow{r}\arrow{d}{v} & Z\arrow[dashed]{r}{\delta}\arrow{d}{w} & \mbox{} \\
X'\arrow{r} & Y'\arrow{r} & Z'\arrow[dashed]{r}{\delta'} & \mathrlap{\: ,}
\end{tikzcd}
\]
\item[b)] $s_{Z,X}(0)$ is a split extriangle
\[
\begin{tikzcd}[ampersand replacement=\&]
X\arrow{r}{
\begin{bsmallmatrix}
\id_X \\
0
\end{bsmallmatrix}
} \& X\oplus Z\arrow{r}{
\begin{bsmallmatrix}
0 & \id_Z
\end{bsmallmatrix}
} \& Z\arrow[dashed]{r}{0} \& \mathrlap{\: ,}
\end{tikzcd}
\]
\item[c)] for $\delta \in \mathbb{E}(Z,X)$ and $\delta' \in \mathbb{E}(Z',X')$, the realisation of $
\begin{bmatrix}
\delta & 0 \\
0 & \delta'
\end{bmatrix}
\in \mathbb{E}(Z\oplus Z',X\oplus X')$ is the direct sum of the extriangles
\[
\begin{tikzcd}
X\arrow{r} & Y\arrow{r} & Z\arrow[dashed]{r}{\delta} & \mbox{}
\end{tikzcd}
\quad \mbox{and} \quad
\begin{tikzcd}
X'\arrow{r} & Y'\arrow{r} & Z'\arrow[dashed]{r}{\delta'} & \mathrlap{\: .}
\end{tikzcd}
\]
\end{itemize}
The triple $(u,v,w)$ in part~a) is called a {\em morphism of extriangles}.
An {\em extriangulated category} is a triple $(\ce,\mathbb{E},s)$, where
\begin{itemize}
\item[ET0)\phantom{$\op$}] $\ce$ is an additive category,
\item[ET1)\phantom{$\op$}] $\mathbb{E}\colon \ce\op \times \ce \to \Mod\Z$ is a biadditive bifunctor,
\item[ET2)\phantom{$\op$}] s is an additive realisation,
\item[ET3)\phantom{$\op$}] given extriangles
\[
\begin{tikzcd}
X\arrow{r}{i} & Y\arrow{r}{p} & Z\arrow[dashed]{r}{\delta} & \mbox{}
\end{tikzcd}
\ko
\begin{tikzcd}
X'\arrow{r}{i'} & Y'\arrow{r}{p'} & Z'\arrow[dashed]{r}{\delta'} & \mbox{}
\end{tikzcd}
\]
and a commutative square
\[
\begin{tikzcd}
X\arrow{r}{i}\arrow[swap]{d}{u} & Y\arrow{d}{v} \\
X'\arrow[swap]{r}{i'} & Y'\mathrlap{\: ,}
\end{tikzcd}
\]
there is a morphism $(u,w)\colon \delta \to \delta'$ such that the diagram
\[
\begin{tikzcd}
X\arrow{r}{i}\arrow{d}{u} & Y\arrow{r}{p}\arrow{d}{v} & Z\arrow{d}{w} \\
X'\arrow[swap]{r}{i'} & Y'\arrow[swap]{r}{p'} & Z'
\end{tikzcd}
\]
commutes,
\item[ET3)$\op$] dual of ET3),
\item[ET4)\phantom{$\op$}] given extriangles
\[
\begin{tikzcd}
X\arrow{r}{i} & Y\arrow{r}{p} & Z\arrow[dashed]{r}{\delta} & \mbox{}
\end{tikzcd}
\quad \mbox{and} \quad
\begin{tikzcd}
Y\arrow{r}{j} & T\arrow{r}{q} & U\arrow[dashed]{r}{\delta'} & \mathrlap{\: ,}
\end{tikzcd}
\]
we can fit them into a diagram
\[
\begin{tikzcd}
X\arrow{r}{i}\arrow[equal]{d} & Y\arrow{r}{p}\arrow{d}{j} & Z\arrow[dashed]{r}{\delta}\arrow{d}{k} & \mbox{} \\
X\arrow{r}{j\circ i} & T\arrow{r}{t}\arrow{d}{q} & Y'\arrow[dashed]{r}{\eps}\arrow{d}{r} & \mbox{} \\
 & U\arrow[equal]{r}\arrow[dashed]{d}{\delta'} & U\arrow[dashed]{d}{p_* \delta'} & \\
 & \mbox{} & \mbox{} & 
\end{tikzcd}
\]
such that $(\id_X,j,k)$ and $(p,t,\id_U)$ are morphisms of extriangles and $(i,r)\colon \eps \to \delta'$ is a morphism of extensions,
\item[ET4)$\op$] dual of ET4).
\end{itemize}

\begin{example}
\begin{itemize}
\item[a)] If $\ce$ is exact, take $\mathbb{E}=\Ext^1_\ce(?,-)$ and $s$ the classical realisation taking each $\delta \in \Ext^1_\ce(Z,X)$ to the corresponding equivalence class of conflation
\[
\begin{tikzcd}
X\arrow[rightarrowtail]{r}{i} & Y\arrow[twoheadrightarrow]{r}{p} & Z\: .
\end{tikzcd}
\]
Then the triple $(\ce,\mathbb{E},s)$ is an extriangulated category.
A morphism $f\colon X\to X'$ yields
\[
\begin{tikzcd}
f_*=\Ext^1(Z,f)\colon \Ext^1(Z,X)\arrow{r} & \Ext^1(Z,X')
\end{tikzcd}
\]
taking $\delta$ to $f_*(\delta)$ and a morphism $g\colon Z'\to Z$ yields
\[
\begin{tikzcd}
g^*=\Ext^1(g,X)\colon \Ext^1(Z,X)\arrow{r} & \Ext^1(Z',X)
\end{tikzcd}
\]
taking $\delta$ to $g^*(\delta)$. These correspond to the commutative diagram
\[
\begin{tikzcd}
X\arrow[rightarrowtail]{r}\arrow[equal]{d} & Y'\arrow[twoheadrightarrow]{r}\arrow{d}\arrow[phantom, very near start]{dr}{\lrcorner} & Z'\arrow[dashed]{r}{g^*(\delta)}\arrow{d}{g} & \mbox{} \\
X\arrow[rightarrowtail]{r}{i}\arrow[swap]{d}{f} & Y\arrow[twoheadrightarrow]{r}{p}\arrow{d} & Z\arrow[dashed]{r}{\delta}\arrow[equal]{d} & \mbox{} \\
X'\arrow[rightarrowtail]{r} & Y''\arrow[twoheadrightarrow]{r}\arrow[phantom, very near start]{ul}{\ulcorner} & Z\arrow[dashed]{r}{f_*(\delta)} & \mathrlap{\: .}
\end{tikzcd}
\]
For a morphism of extensions $(u,w)\colon \delta \to \delta'$, the equality $u_*(\delta)=w^*(\delta')$ means that there is a morphism $v\colon Y\to Y'$ fitting into the commutative diagram
\[
\begin{tikzcd}
X\arrow[rightarrowtail]{r}{i}\arrow{d}{u} & Y\arrow[twoheadrightarrow]{r}{p}\arrow{d}{v} & Z\arrow[dashed]{r}{\delta}\arrow{d}{w} & \mbox{} \\
X'\arrow[rightarrowtail,swap]{r}{i'} & Y'\arrow[twoheadrightarrow,swap]{r}{p'} & Z'\arrow[dashed]{r}{\delta'} & \mathrlap{\: .}
\end{tikzcd}
\]
\item[b)] If $\ce$ is triangulated, take $\mathbb{E}=\ce(?,\Si -)$ and $s$ the classical realisation taking each $\delta \in \ce(Z,\Si X)$ to the corresponding equivalence class of triangle
\[
\begin{tikzcd}
X\arrow{r} & Y\arrow{r} & Z\arrow{r}{\delta} & \Si X\: ,
\end{tikzcd}
\]
then the triple $(\ce,\mathbb{E},s)$ is an extriangulated category. A morphism $f\colon X\to X'$ yields $f_*=\ce(Z,\Si f)$ and a morphism $g\colon Z'\to Z$ yields $g^*=\ce(g,\Si X)$ and these correspond to a similar diagram as above.
For a morphism of extensions $(u,w)\colon \delta \to \delta'$, the equality $u_*(\delta)=w^*(\delta')$ means that the square
\[
\begin{tikzcd}
Z\arrow{r}{\delta}\arrow[swap]{d}{w} & \Si X\arrow{d}{\Si u} \\
Z'\arrow[swap]{r}{\delta'} & \Si X'
\end{tikzcd}
\]
commutes.
\item[c)] Let $\ct$ be a triangulated category and $\ce \subseteq \ct$ an extension closed full subcategory. Then $\ce$ with $\mathbb{E}=\ct(?,\Si -)$ and the induced realisation is an extriangulated category.
\end{itemize}
\end{example}

\section{On the virtues of pseudo-compact dg algebras} \label{section:pseudo-compact dg algebras}

\subsection{Construction of augmentations}

Suppose that $l$ is a finite-dimensional semisimple $k$-algebra.

\begin{proposition} \label{prop:complete augmented}
Let $A$ be a connective pseudo-compact dg algebra such that $H^0(A)$ is a complete $l$-augmented pseudo-compact algebra. If the algebra $l$ is separable, then $A$ is quasi-isomorphic to a complete $l$-augmented pseudo-compact dg algebra concentrated in non-positive degrees.
\end{proposition}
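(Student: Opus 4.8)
The plan is to reduce to the case that $A$ is concentrated in non-positive degrees, then to observe that the augmentation map and completeness come essentially for free, so that the whole content lies in producing a unit map $\eta$, for which the separability of $l$ is used. First, replace $A$ by its truncation $\tau_{\leq 0}A$. Since $A$ is connective, the inclusion $\tau_{\leq 0}A\hookrightarrow A$ is a quasi-isomorphism; it is a continuous morphism of pseudo-compact dg algebras because $Z^0(A)=\ker(d\colon A^0\to A^1)$ is a closed subspace of the pseudo-compact space $A^0$, so $\tau_{\leq 0}A$ is again pseudo-compact. Thus we may assume $A^{>0}=0$, whence $A^0=Z^0(A)$ and there is a surjective continuous morphism of pseudo-compact dg algebras $\rho\colon A\twoheadrightarrow H^0(A)$ inducing the identity on $H^0$, with kernel the closed dg ideal $\mathfrak n=A^{<0}+dA^{-1}$. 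Write $\pi=\rho|_{A^0}\colon A^0\twoheadrightarrow H^0(A)$, a continuous surjection of pseudo-compact algebras with kernel $dA^{-1}$.

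Set $\eps:=\eps_0\circ\rho\colon A\to l$, a continuous morphism of pseudo-compact dg algebras. Granting a continuous unit map $\eta\colon l\to A$ with $\eps\circ\eta=\id_l$, the resulting $l$-augmented pseudo-compact dg algebra $\tau_{\leq 0}A$ is automatically complete: since $A$ is connective one has $\rad A=\rho^{-1}(\rad H^0(A))$ — the simple pseudo-compact dg $A$-modules being exactly the simple $H^0(A)$-modules placed in degree $0$, on which $\mathfrak n$ acts by zero — and since $H^0(A)$ is complete one has $\rad H^0(A)=\ol{H^0(A)}=\ker\eps_0$, so $\ol{A}=\ker\eps=\rho^{-1}(\ker\eps_0)=\rad A$. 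Hence all the content is in the construction of $\eta$, and this is where the separability of $l$ enters.

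As $l$ sits in degree $0$ with zero differential and $A^0=Z^0(A)$, a morphism of pseudo-compact dg algebras $l\to A$ is the same as a continuous algebra morphism $\eta\colon l\to A^0$ (not required to preserve units), and $\eps\circ\eta=\id_l$ holds once $\pi\circ\eta$ is a section of $\eps_0$; I will arrange $\pi\circ\eta=\eta_0$. So the task reduces to lifting the algebra morphism $\eta_0\colon l\to H^0(A)$ along $\pi\colon A^0\twoheadrightarrow H^0(A)$. Writing $A^0=\varprojlim_{J}A^0/J$ over the open two-sided ideals $J$ of finite codimension, it suffices to produce compatible lifts $\eta_J\colon l\to A^0/J$ of the induced morphisms $l\to A^0/(dA^{-1}+J)$, the target being a finite-dimensional quotient of $H^0(A)$. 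At each finite level this is a lifting of a morphism out of the \emph{separable} algebra $l$ along a surjection of finite-dimensional algebras; because $l$ is separable, $\hh^{\geq 1}(l,-)$ vanishes, so all obstructions to such lifts vanish and any two lifts are conjugate by an element $1+(\text{kernel})$.

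The main obstacle is that the kernel $(dA^{-1}+J)/J$ need not be nilpotent — indeed $dA^{-1}$ itself need not lie in $\rad A^0$, e.g.\ when $A=(l\times R)\oplus R[1]$ with $d$ carrying the copy $R[1]$ of $R$ in degree $-1$ isomorphically onto the second factor. To handle this I would lift first at the level of semisimple quotients: the image of $l$ in $(A^0/J)/\rad(A^0/J)$ is separable, hence a direct factor, cut out by a central idempotent; lift that idempotent to $e\in A^0/J$, so that the corner algebra $e\,(A^0/J)\,e$ has the said image as its semisimple quotient with \emph{nilpotent} kernel $e\,\rad(A^0/J)\,e$, and apply the Wedderburn--Malcev principal theorem there to get a section; then correct by the remaining nilpotent part (absorbing a conjugation to match $\eta_{0,J}$). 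Compatibility of the $\eta_J$ across the directed system of ideals $J$ is obtained from the conjugacy of lifts, adjusting choices inductively and passing to the inverse limit to define $\eta=\varprojlim_{J}\eta_J\colon l\to A^0\subseteq A$. Then $\eps$ and $\eta$ exhibit $\tau_{\leq 0}A$ as a complete $l$-augmented pseudo-compact dg algebra concentrated in non-positive degrees and quasi-isomorphic to $A$. I expect this limiting construction of $\eta$ — forcing a compatible system of Wedderburn--Malcev liftings through the pseudo-compact filtration despite the non-nilpotent ideals — to be the delicate point, and the separability of $l$ is exactly what makes the individual liftings and their conjugacy available.
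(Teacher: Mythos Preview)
Your diagnosis of the difficulty is exactly right: the kernel $dA^{-1}$ of $A^0\twoheadrightarrow H^0(A)$ need not lie in $\rad A^0$, so a naive Wedderburn--Mal'cev lifting of $\eta_0$ is blocked. You also land on the correct remedy, the idempotent/corner trick. Where your argument diverges from the paper's is in \emph{where} you apply that trick. You perform it at each finite quotient $A^0/J$ and then attempt to thread the resulting sections $\eta_J$ through the inverse system by conjugation. The paper instead applies it once, globally: from the surjection $A^0/\rad(A^0)\twoheadrightarrow l$ one picks an algebra section $s$, lifts the idempotent $s(1_l)$ to an idempotent $e\in A^0$ (idempotent lifting in pseudo-compact rings, \cite{Bass68}), and then simply replaces $A$ by the corner $eAe$. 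One checks that the image of $1_A-e$ in $H^0(A)$ is an idempotent lying in $\rad H^0(A)$, hence zero; so $1_A-e\in dA^{-1}$, the inclusion $eA\hookrightarrow A$ is a quasi-isomorphism of dg $A$-modules, and therefore $eAe\simeq\RHom_A(eA,eA)$ is quasi-isomorphic to $A$. In $eAe$ one has $eAe/\rad(eAe)\cong l$ on the nose, so the ordinary Wedderburn--Mal'cev theorem furnishes the unit map directly, with no inverse-limit bookkeeping.

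The step you yourself flag as delicate --- producing a \emph{compatible} system $(\eta_J)_J$ --- is a genuine gap as written. Conjugacy of lifts gives transitivity level by level, but it does not by itself produce a coherent family across an arbitrary cofiltered system of open ideals; you would need either a cofinal $\omega$-chain (not available for a general pseudo-compact algebra over an arbitrary field) or a nonemptiness-of-inverse-limit argument for the torsors of lifts, neither of which you supply. The paper's global idempotent lift sidesteps this entirely: once $e$ is fixed in $A^0$, the problem reduces to a single application of Wedderburn--Mal'cev in $eAe$, where the kernel of the map to $l$ is the honest radical. I would rewrite your proof along those lines: lift the idempotent once, pass to the corner, and you are done.
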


\begin{remark} \label{rk:Wedderburn}
If $B$ is a pseudo-compact algebra whose radical quotient
$l=B/\rad B$ is finite-dimensional and separable, then by the Wedderburn--Mal'cev theorem, the algebra morphism $B \to l$ admits a section and $B$ becomes a complete $l$-augmented pseudo-compact algebra. So if $A$ is a connective pseudo-compact dg algebra such that the radical quotient $l$ of $H^0(A)$ is finite-dimensional and separable, then $A$ is quasi-isomorphic to a complete 
$l$-augmented pseudo-compact dg algebra concentrated in non-positive degrees.
\end{remark}

\begin{proof}[Proof of Proposition~\ref{prop:complete augmented}]
By truncating we may and will assume that $A$ is concentrated in non-positive degrees. We have the surjective morphisms
\[
\begin{tikzcd}
A^0 \arrow{r} & H^0(A) \arrow{r} & l
\end{tikzcd}
\]
of pseudo-compact algebras which induce a surjective morphism $A^0/\rad(A^0) \to l$ of semisimple pseudo-compact algebras. This morphism has an algebra section $s$ (not necessarily preserving the unit). Then $s(1_l)$ is an idempotent of $A^0/\rad(A^0)$. Since the algebra $A^0$ is pseudo-compact, by Proposition~III.2.10 of \cite{Bass68}, the idempotent $s(1_l)$ lifts to an idempotent $e$ of $A^0$. By construction, the element $1_l-e$ is a boundary. So the inclusion $eA\to A$ is a quasi-isomorphism of pseudo-compact dg $A$-modules. Therefore, the pseudo-compact dg algebra $eAe=\RHom_A(eA,eA)$ 
is quasi-isomorphic to $A=\RHom_A(A,A)$. Since we have isomorphisms
\[
\begin{tikzcd}
eAe/\rad(eAe)\arrow{r}{\sim} & eA^0 e/\rad(eA^0 e)\arrow{r}{\sim} & s(1_l)(A^0/\rad(A^0))s(1_l)\arrow{r}{\sim} & l
\end{tikzcd}
\]
of pseudo-compact dg algebras and $l$ is separable, by the Wedderburn--Mal'cev theorem, the dg algebra $eAe$ underlies a complete $l$-augmented pseudo-compact dg algebra (concentrated in non-positive degrees).
\end{proof}

\subsection{Minimality, perfection and smoothness in the pseudo-compact setting}

\begin{proposition} \label{prop:minimal cofibrant replacement}
Let $A$ be a complete $l$-augmented pseudo-compact dg algebra and $M$ a pseudo-compact dg $A$-module. Then $M$ is weakly equivalent to a cofibrant pseudo-compact dg $A$-module $P$
which is {\em{minimal}}, \ie we have $d(P)\subseteq P\cdot \rad{A}$.
\end{proposition}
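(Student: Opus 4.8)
The plan is to build $P$ in two steps: first take an arbitrary cofibrant replacement of $M$ provided by the model structure on pseudo-compact dg $A$-modules, then split off its maximal contractible direct summand; what remains will be minimal.

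\emph{Step 1 (a cofibrant replacement).} By the model structure on pseudo-compact dg $A$-modules recalled in section~\ref{ss:morphisms between pc algebras} --- the one dual, via the duality $\D$, to the Lefèvre--Hasegawa model structure on cocomplete dg comodules over the coalgebra $C=\D A$ --- the module $M$ admits a cofibrant replacement $p\colon P'\to M$; for instance one can take the standard bar--cobar resolution, whose underlying graded $A$-module is $(M\otimes_l C)\otimes_l A$. More generally, the underlying graded module of any cofibrant object is a retract of one of the form $N\otimes_l A$ for a pseudo-compact graded $l$-bimodule $N$. Since $A$ is complete, $\rad A=\ol A$ is the kernel of the augmentation $\eps$, hence a differential graded ideal; so $P'\cdot\rad A$ is a dg submodule of $P'$ and $\ol{P}':=P'/P'\cdot\rad A$ is a complex of pseudo-compact $l$-modules, with differential $\ol d$ induced by that of $P'$.

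\emph{Step 2 (removing the contractible part).} As $l$ is finite-dimensional semisimple, the abelian category of pseudo-compact $l$-modules is semisimple, so $(\ol{P}',\ol d)$ decomposes as $\ol H\oplus\ol E$ with $\ol d$ vanishing on $\ol H$ and $\ol E$ contractible. The heart of the argument is to lift this to a direct sum decomposition $P'=P\oplus E$ of pseudo-compact dg $A$-modules with $E$ contractible and $E\otimes_A l=\ol E$, so that necessarily $P\otimes_A l=\ol H$. Granting this, $\ol d$ vanishes on $P/P\cdot\rad A$, which says precisely that $d(P)\subseteq P\cdot\rad A$; thus $P$ is minimal, it is a retract of the cofibrant module $P'$ hence cofibrant, and since $E$ is contractible the inclusion $P\hookrightarrow P'$ is a homotopy equivalence, so that $P\to P'\xrightarrow{p}M$ is a weak equivalence, as required.

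\emph{The main obstacle} is the lifting in Step 2. Lifting the contracting homotopy of $\ol E$ (equivalently, the idempotent of $\ol{P}'$ with image $\ol E$) to an $A$-linear map on $P'$ naively fails to be compatible with the differential by a correction term valued in $P'\cdot\rad A$; one then cancels this correction modulo $\rad A$, cancels the resulting new correction modulo $(\rad A)^2$, then modulo $(\rad A)^3$, and so on. Because $A$ is complete, the successive corrections lie in arbitrarily high powers of $\rad A$, and because $P'$ is pseudo-compact this infinite process converges, yielding the desired splitting; this is exactly where the hypotheses that $A$ be complete and pseudo-compact are used, in the same spirit as the idempotent-lifting argument in the proof of Proposition~\ref{prop:complete augmented}. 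One should moreover check that $E$, being a cofibrant module whose reduction $E\otimes_A l$ is contractible, is itself contractible and not merely acyclic; this again follows by lifting the contracting homotopy of $\ol E$ along the $\rad A$-adic filtration.
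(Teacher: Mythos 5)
Your proposal is correct in substance but takes a genuinely different route from the paper's. The paper dualises to the comodule side over $C=\D A$ and invokes Lef\`evre--Hasegawa's minimal-model theorems: it first replaces $C$ by $B_\infty\La$ for a minimal $A_\infty$-algebra $\La$, then replaces $\D M$ by a direct summand of $B_\infty N$ for a minimal $A_\infty$-module $N$, and reads off minimality of $P=\D B_\infty N\simeq \D N\ten_l A$ directly from the minimality of $N$. You stay on the module side and run a homological perturbation argument along the radical filtration of a quasi-free replacement $V\ten_l A$; this is essentially the argument hidden inside Lef\`evre's Proposition~2.4.1.3, so both proofs rest on the same convergence phenomenon, yours being more self-contained (no $A_\infty$ machinery) at the price of executing the perturbation in the pseudo-compact topology, which the paper outsources. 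Two points you should make explicit. First, the filtration to use is by the closures of the powers of $\rad A$; its separatedness is precisely the dual of the cocompleteness of $C=\D A$, i.e.\ this is exactly where completeness of $A$ enters, and together with linear compactness it gives $V\ten_l A\simeq\varprojlim V\ten_l(A/J_n)$ (where $J_n$ denotes the closure of $(\rad A)^n$), which is what makes your series converge. Second, your bookkeeping of the corrections is slightly off: the internal differential $\id_V\ten d_A$ preserves this filtration but does not raise it, so it cannot be treated as a correction to be pushed into higher powers of $\rad A$; it must instead be absorbed into the unperturbed differential $D_0=\ol{d}\ten\id_A\pm\id_V\ten d_A$, for which $\ol{h}\ten\id_A$ is already a contracting homotopy of the $\ol{E}$-part by the tensor-product formula, and only the remainder $t=d-D_0$ (which does raise the filtration, since $t(v\ten a)=r(v)\cdot a$ with $r(v)\in V\ten_l\ol{A}$) is fed into the iteration $h=\sum_{n\ge 0}h_0(th_0)^n$. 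With that adjustment, and the standard side conditions making $e'=dh+hd$ an $A$-linear idempotent commuting with $d$, your splitting $P'=P\oplus E$, the minimality of $P$, its cofibrancy as a retract of a cofibrant object, and the fact that $P\to P'\to M$ is a weak equivalence all go through.
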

\begin{proof}
The coalgebra $C=\D A$ is a cocomplete $l$-augmented coalgebra. By page~41 of \cite{Lefevre03}, it is weakly equivalent to $B_\infty \La$ for a minimal $l$-augmented $A_\infty$-algebra $\La$. Without loss of generality, we may and will assume that the coalgebra $C$ equals $B_\infty \La$. By Proposition~2.4.1.3 of \cite{Lefevre03}, the cocomplete dg $C\op$-comodule $\D M$ is weakly equivalent to a direct summand of $B_\infty N$ for a minimal $A_\infty$-module $N$ over $\La\op$. Since the class of dg modules $M$ satisfying the claim is stable under passage to direct summands, we may and will assume that $\D M$ is weakly equivalent to $B_\infty N$. Put $P=\D B_\infty N$. It is weakly equivalent to $M$. Since $B_\infty N$ is isomorphic to $C\ten_l N$ as a graded $C\op$-comodule, we have
\[
P\simeq \D(C\ten_l N)=\D N\ten_l \D C=\D N\ten_l A
\]
as pseudo-compact graded $A$-modules. So the pseudo-compact dg $A$-module $P$ is cofibrant. Since $N$ is minimal, the composed map
\[
\begin{tikzcd}
l\ten_l N\arrow{r}{\eta \ten \id_N} & B_\infty N\arrow{r}{d} & B_\infty N\arrow{r}{\eps \ten \id_N} & l\ten_l N
\end{tikzcd}
\]
is the zero map. So the dual map
\[
\begin{tikzcd}
\D N\ten_l l\arrow{r}{\id_{\D N}\ten \eta} & P\arrow{r}{d} & P\arrow{r}{\id_{\D N}\ten \eps} & \D N\ten_l l
\end{tikzcd}
\]
is also the zero map. Since the image of the map $\id_{\D N}\ten \eta$ generates $P$ as a pseudo-compact dg $A$-module, the image of $d$ is contained in the kernel of $\id_{\D N}\ten \eps$, which equals $P\cdot \rad A$.
\end{proof}

\begin{corollary} \label{cor:perfect}
Let $A$ be a complete $l$-augmented pseudo-compact dg algebra and $M$ a pseudo-compact dg $A$-module. Then $M$ is perfect if and only if $\RHom_A(M,l)$ lies in $\per k$.
\end{corollary}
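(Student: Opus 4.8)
The plan is to use the minimal cofibrant model $P \xrightarrow{\sim} M$ produced by Proposition~\ref{prop:minimal cofibrant replacement}, for which $d(P) \subseteq P\cdot\rad A$ and $P \simeq \D N \ten_l A$ as pseudo-compact graded $A$-modules for a graded $l$-module $N$. The key observation is that minimality forces the reduced complex $\RHom_A(P,l)$ to have \emph{zero differential}: indeed $\RHom_A(P,l)$ is computed by $\Hom_A^{pc}(P,l)$, and any $A$-linear map $P \to l$ (with $l$ annihilated by $\rad A$) kills $P\cdot\rad A \supseteq d(P)$, so precomposition with $d_P$ vanishes; composition with $d_l = 0$ also vanishes. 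Hence $\RHom_A(M,l) \cong \Hom_A^{pc}(P,l) \cong DN$ with zero differential, as a complex of vector spaces. Therefore $\RHom_A(M,l)$ lies in $\per k$ if and only if $N$ is concentrated in finitely many degrees and is finite-dimensional in each, i.e.\ $N$ is a perfect (bounded, finite-dimensional) graded $l$-module.

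First I would establish the ``only if'' direction, which is the easy one: if $M$ is perfect, then it lies in the thick subcategory generated by $A$, so $\RHom_A(M,l)$ lies in the thick subcategory generated by $\RHom_A(A,l) = l$, which is a perfect complex of $k$-vector spaces; hence $\RHom_A(M,l) \in \per k$. (One does not even need minimality here.)

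For the ``if'' direction, suppose $\RHom_A(M,l) \in \per k$. By the computation above, $N$ is a bounded graded $l$-module with finite-dimensional components, so there is a finite filtration of $N$ by graded $l$-submodules whose subquotients are shifts of simple $l$-modules (equivalently of $l$ itself, since $l$ is semisimple finite-dimensional). Applying $\D(-)\ten_l A$ and using that $P \simeq \D N \ten_l A$, this produces a finite filtration of $P$ by pseudo-compact dg $A$-submodules whose subquotients are (shifts of) direct summands of the free module $A$; since each such subquotient is perfect and perfectness is closed under extensions and shifts, $P$ is perfect, hence so is $M$. The one point requiring a little care is that the filtration of the graded module $N$ can be chosen compatibly with the (possibly nontrivial, since $N$ is only an $A_\infty$-module and not a dg module) higher structure so as to yield genuine dg $A$-submodules of $P$ after dualising and inducing up; alternatively one argues directly with the increasing filtration of $P$ by $P\cdot\rad^n A$ — but because $P\cdot\rad A \supseteq d(P)$ and $P$ is a finitely generated (over the complete algebra $A$) pseudo-compact module with $N$ bounded, the associated graded is a finite sum of shifted copies of $l = A/\rad A$ placed in finitely many homological degrees, and assembling these back up through the (finitely many) extensions shows $P \in \per A$.

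I expect the main obstacle to be this last assembling step: one must check that the ``minimal model'' $P$, although typically infinite-dimensional as a pseudo-compact module (being $\D N \ten_l A$ with $A$ complete), is nevertheless perfect precisely when $N$ is bounded, and that the relevant filtration has \emph{finitely many} steps and dg (not merely graded) subquotients. Once the identification $\RHom_A(M,l) \simeq DN$ with zero differential is in place, everything else is bookkeeping with the complete filtration $\{P\cdot\rad^n A\}$ and the fact that $\per A$ is a thick subcategory.
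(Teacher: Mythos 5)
Your proposal is correct and follows essentially the same route as the paper: both directions rest on the minimal cofibrant model $P\simeq V\ten_l A$ of Proposition~\ref{prop:minimal cofibrant replacement}, the observation that minimality forces $\RHom_A(M,l)\simeq\Hom_k(V,k)$ to have zero differential, and the conclusion that $V$ is of finite total dimension, whence $P$ is perfect. The final assembling step you flag as delicate is exactly the point the paper treats (tersely) by the same finite-filtration argument, so there is no substantive difference.
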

\begin{proof}
The necessity of the condition is obvious. Let us prove that it is sufficient.
By Proposition~\ref{prop:minimal cofibrant replacement}, the pseudo-compact dg $A$-module $M$ is weakly equivalent to a cofibrant pseudo-compact dg $A$-module $P\simeq V\ten_l A$ for a pseudo-compact graded $l$-module $V$ satisfying $d(P)\subseteq P\cdot \rad{A}$. So the complex
\[
\begin{tikzcd}
\RHom_A(M,l)\arrow[no head]{r}{\sim} & \cc_{dg}(A)(V\ten_l A,l)\arrow{r}{\sim} & \Hom_l(V,l)\arrow[no head]{r}{\sim} & \Hom_k(V,k)
\end{tikzcd}
\]
has vanishing differential, where the last isomorphism follows from Lemma~3.2.3 of \cite{KellerLiu23}. Since it lies in $\per k$, the pseudo-compact graded $l$-module $V$ is of finite total dimension. Therefore, the pseudo-compact dg $A$-module $P$ is perfect and hence so is $M$.
\end{proof}

\begin{corollary} \label{cor:smooth}
Let $A$ be a complete $l$-augmented pseudo-compact dg algebra. Then the following are equivalent.
\begin{itemize}
\item[i)] $A$ is smooth.
\item[ii)] $\pvd A$ is contained in $\per A$.
\item[iii)] $A^!$ is proper.
\end{itemize}
\end{corollary}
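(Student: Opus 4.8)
The plan is to establish the cycle of implications $(i)\Rightarrow(ii)\Rightarrow(iii)\Rightarrow(i)$. The main tool is Corollary~\ref{cor:perfect}, together with the identification of the Koszul dual $A^!$ with the derived endomorphism dg algebra $\RHom_A(l,l)$ (see section~12.5 of \cite{VandenBergh15}); in particular $A^!$ is proper if and only if $\RHom_A(l,l)$ lies in $\per k$.

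For $(i)\Rightarrow(ii)$: since $A$ is smooth, the identity bimodule lies in the thick subcategory of $\cd(A^e)$ generated by the free bimodule $A\ten_k A$. Applying the triangle functor $M\ten^{\mathbf{L}}_A(-)\colon \cd(A^e)\to\cd(A)$ for a given $M\in\pvd A$, which sends the identity bimodule to $M$ and the free bimodule to $M\ten_k A$, we see that $M$ lies in the thick subcategory of $\cd(A)$ generated by $M\ten_k A$. Since $M$ is isomorphic, as a complex of pseudo-compact $k$-vector spaces, to its finite-dimensional homology $H^*(M)$, the module $M\ten_k A$ is a finite direct sum of shifted copies of $A$, hence perfect; so $M$ is perfect. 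For $(ii)\Rightarrow(iii)$: the $A$-module $l$ (via the augmentation) has finite-dimensional homology, so it belongs to $\pvd A$, hence to $\per A$ by $(ii)$; by Corollary~\ref{cor:perfect} applied with $M=l$, the complex $\RHom_A(l,l)\simeq A^!$ lies in $\per k$, i.e.\ $A^!$ is proper.

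The implication $(iii)\Rightarrow(i)$ is the main point, and the plan is to apply Corollary~\ref{cor:perfect} not to $A$ but to the enveloping algebra $A^e=A\ten_k A\op$. One first checks that $A^e$ is again a complete $l^e$-augmented pseudo-compact dg algebra, where $l^e=l\ten_k l\op$; here one uses that $l$, hence $l^e$, is separable (a mild hypothesis which holds in all our applications). Moreover $\Hom_k(l,l)$ is a finite-dimensional faithful $l^e$-module, so it generates $\per(l^e)$; hence, by the trivial variant of Corollary~\ref{cor:perfect} in which $l^e$ is replaced by any such module, $A$ is smooth, i.e.\ $A\in\per(A^e)$, if and only if $\RHom_{A^e}(A,\Hom_k(l,l))$ lies in $\per k$. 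Finally, the standard adjunction isomorphism $\RHom_{A^e}(A,\Hom_k(l,l))\simeq\RHom_A(l,l)$ (Hochschild cohomology of $A$ with coefficients in this bimodule) identifies this complex with $A^!$, which lies in $\per k$ by $(iii)$. So $A$ is smooth. The delicate points are the verification that $A^e$ is complete $l^e$-augmented — where separability enters — and the compatibility of the Hochschild-cohomology adjunction with the pseudo-compact derived categories; both are routine but must be handled with care. (Alternatively, one may deduce $(iii)\Rightarrow(i)$ from the fact that for complete augmented $A$ the double Koszul dual of $A$ is $A$, together with the fact that the Koszul dual of a proper augmented dg algebra is smooth.)

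For completeness I note that $(ii)$ and $(iii)$ can also be related directly, which is morally the natural argument: by Proposition~\ref{prop:minimal cofibrant replacement} an object $M$ of $\cd(A)$ has a minimal cofibrant model $P\simeq V\ten_l A$ with $d(P)\subseteq P\cdot\rad A$, whence $M\ten^{\mathbf{L}}_A l\simeq V$ with zero differential (as $\rad A$ annihilates $l$) while $\RHom_A(M,l)\simeq\Hom_k(V,k)$ as in the proof of Corollary~\ref{cor:perfect}; therefore $M$ is perfect if and only if $M\ten^{\mathbf{L}}_A l$ lies in $\per k$. When $A^!$ is proper, $l$ is perfect over $A$ on both sides (Corollary~\ref{cor:perfect} applied to $A$ and to $A\op$), so for every $M\in\pvd A$ the complex $M\ten^{\mathbf{L}}_A l$ is built from finitely many shifts of the complex underlying $M$ by cones and retracts, hence lies in $\per k$; this gives $(iii)\Rightarrow(ii)$, and the converse is the argument given above.
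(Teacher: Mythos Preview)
Your proof is correct and follows essentially the same route as the paper's: the implications $(i)\Rightarrow(ii)\Rightarrow(iii)$ are handled identically, and for $(iii)\Rightarrow(i)$ both you and the paper apply Corollary~\ref{cor:perfect} to $A^e$ together with the adjunction identifying $\RHom_{A^e}(A,\Hom_k(l,l))$ with the underlying complex of $A^!$ (the paper writes this as $\RHom_{A\op}(l,l)$, which differs from your $\RHom_A(l,l)$ only by the choice of side and does not affect properness). Your additional direct argument for $(iii)\Rightarrow(ii)$ via minimal cofibrant models is a nice complement but not needed for the cycle.
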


\begin{remark} The implications from i) to ii) and from ii) to iii) are well-known and also hold
for dg algebras $A$ which are not pseudo-compact. The implication from iii) to i) needs the pseudo-compactness assumption.
\end{remark}

\begin{proof}[Proof of Corollary~\ref{cor:smooth}]
Let us show that i) implies ii): let $M$ be an object in $\pvd A$. Then the underlying complex of $M$ lies in $\per k$ and therefore, the object $M\lten_k A$ lies
in $\per A$. Since $A$ is perfect over $A^e$, the object $M \liso M \lten_A A$ lies in the thick subcategory of $\cd(A)$ generated by $M\lten_A A^e \iso M\lten_k A$ and so in $\per A$.

Clearly ii) implies iii): since the object $\RHom_A(A,l)$ lies in $\per k$ and $l$ lies in $\per A$, the underlying complex of $A^!\simeq \RHom_A(l,l)\op$ lies in $\per k$.

We show that iii) implies i): we have isomorphisms
\[
\begin{tikzcd}
\RHom_{A^e}(A,l^e)\arrow{r}{\sim} & \RHom_{A^e}(A,\Hom_k(l,l)) & \RHom_{A\op}(l,l)\arrow[swap]{l}{\sim} \: .
\end{tikzcd}
\]
Since the underlying complex of $\RHom_{A\op}(l,l)\simeq A^!$ lies in $\per k$, by Corollary~\ref{cor:perfect}, the pseudo-compact dg $A^e$-module $A$ is perfect.
\end{proof}

\begin{lemma} \label{lem:forgetful functor}
Let $A$ be a smooth pseudo-compact dg algebra and $A^u$ its underlying dg algebra. Then the forgetful functor $\pvd A \to \pvd A^u$ is fully faithful.
\end{lemma}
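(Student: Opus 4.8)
The plan is a dévissage onto the free generator $A$. First I would invoke smoothness: by the easy implication i) $\Rightarrow$ ii) of Corollary~\ref{cor:smooth} (which, as the subsequent remark notes, holds without any completeness or pseudo-compactness hypothesis), the category $\pvd A$ is contained in $\per A$. It therefore suffices to prove the sharper statement that for every $M$ in $\per A$ and every $N$ in $\cd(A)$, the canonical forgetful morphism
\[
\rho_{M,N}\colon \RHom_A(M,N)\longrightarrow \RHom_{A^u}(M^u,N^u)
\]
is an isomorphism in $\cd(k)$: applying this to $M,N$ in $\pvd A\subseteq \per A$ and taking $H^0$ yields the bijection $\cd(A)(M,N)\iso \cd(A^u)(M^u,N^u)$ expressing full faithfulness (note that $M^u$ indeed lies in $\pvd A^u$, since the total homology is unchanged).

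Next I would observe that the forgetful functor $(-)^u\colon \cd(A)\to \cd(A^u)$ is well defined, because weak equivalences of pseudo-compact dg $A$-modules are in particular quasi-isomorphisms, and that it is triangulated, because it is induced by a dg functor and hence preserves shifts and mapping cones. Now fix $N$ and let $\cs$ be the full subcategory of $\cd(A)$ consisting of those $M$ for which $\rho_{M,N}$ is invertible. Since $\RHom_A(-,N)$ and $\RHom_{A^u}((-)^u,N^u)$ are triangulated functors $\cd(A)\op \to \cd(k)$ and $\rho_{-,N}$ is a morphism between them, the subcategory $\cs$ is triangulated (closure under cones via the five lemma applied to the induced morphism of triangles) and thick (closure under retracts via the naturality of $\rho$). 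Finally $A\in \cs$, since $\RHom_A(A,N)\simeq N$ and $\RHom_{A^u}(A^u,N^u)\simeq N^u$ naturally and under these identifications $\rho_{A,N}$ is the identity of the underlying complex. As $\per A$ is by definition the thick subcategory of $\cd(A)$ generated by $A$, this gives $\per A\subseteq \cs$ and finishes the proof.

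The one point I expect to require genuine care, rather than routine verification, is the compatibility between the forgetful functor and the pseudo-compact derived category $\cd(A)$, whose morphisms are obtained by inverting a class of weak equivalences strictly smaller than the quasi-isomorphisms: one must check that this class still maps to isomorphisms in $\cd(A^u)$, that $(-)^u$ is triangulated in the precise sense needed for the five-lemma step, and that the canonical identification $\cd(A)(A,N)=H^0(N)$ is compatible with its counterpart over $A^u$. Once these are in place, the dévissage above is entirely formal; smoothness enters only through the single, essential inclusion $\pvd A\subseteq \per A$.
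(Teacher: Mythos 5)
Your proposal is correct and follows essentially the same route as the paper: smoothness is used only to invoke the implication i)\,$\Rightarrow$\,ii) of Corollary~\ref{cor:smooth} and obtain $\pvd A\subseteq \per A$, after which full faithfulness on $\per A$ is reduced by d\'evissage to the free generator $A$, where it holds because the homology of $A^u$ coincides with that of $A$. The paper compresses this d\'evissage into the single assertion that $\per A\to\per A^u$ is an equivalence since both categories are generated by $A$; your write-up merely makes the thick-subcategory argument and the well-definedness of the forgetful functor explicit.
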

\begin{proof}
The forgetful functor from $\per A \to \per A^u$ is an equivalence because both categories are generated by $A$ and the homology of $A^u$ coincides with the homology of $A$. Since the pseudo-compact dg algebra $A$ is smooth, by the implication from i) to ii) in Corollary~\ref{cor:smooth}, the equivalence $\per A \to \per A^u$ restricts to a fully faithful functor $\pvd A \to \pvd A^u$.
\end{proof}

\begin{lemma} \label{lem:pseudo-compact localisation}
Let $A$ and $B$ be pseudo-compact dg algebras and $A^u$ respectively $B^u$ their underlying dg algebras. Let $f\colon A\to B$ be a continuous morphism between pseudo-compact dg algebras and $f^u$ its underlying morphism between dg algebras. If $A$ is smooth and the induction functor $f^{u*}\colon \cd(A^u)\to \cd(B^u)$ is a localisation, then $f$ is a pseudo-compact localisation, \ie the multiplication morphism $\mu\colon B\lten_A B \to B$ in $\cd(B^e)$ is an isomorphism.
\end{lemma}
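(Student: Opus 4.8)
\emph{Proof strategy.} The plan is to deduce the pseudo-compact statement from its non-pseudo-compact counterpart, which is exactly the hypothesis, by contracting both factors of $B\lten_A B$ against the semisimple quotient $l=B/\rad B$ (we assume throughout, as holds in the situations of interest, that $A$ and $B$ are complete augmented, with a fixed $k$-algebra map $l_A\to l$ compatible with $f$). Unravelling the definition, we must show that the cone $C$ of $\mu\colon B\lten_A B\to B$ is zero in $\cd(B^e)$. As $B^e$ is a complete augmented pseudo-compact dg algebra, weak equivalences of pseudo-compact dg $B^e$-modules are detected by contracting with the simple $B^e$-module, i.e.\ by the functor $l\lten_B(-)\lten_B l$; and a direct computation, using $l\lten_B B=l=B\lten_B l$, identifies $l\lten_B C\lten_B l$ with $\cone(l\lten_A l\to l\lten_B l)$. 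So it suffices to prove that the canonical morphism $l\lten_A l\to l\lten_B l$ is a quasi-isomorphism.

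Here the smoothness of $A$ enters. By Corollary~\ref{cor:smooth} we have $\pvd A\subseteq\per A$, so the finite-dimensional module $l$ lies in $\per A$ and, by Proposition~\ref{prop:minimal cofibrant replacement}, admits a \emph{finite-dimensional} minimal cofibrant model $P=V\ten_{l_A}A$. By minimality, the induced complex $l\lten_A l\simeq P\ten_A l=V\ten_{l_A}l$ has zero differential; in particular it is a bounded complex of finite-dimensional vector spaces. Since $V$ and $l$ are finite-dimensional, forgetting the topology commutes with the tensor products involved, so $V\ten_{l_A}A^u$ is a cofibrant model of $l$ over $A^u$ and the same complex $V\ten_{l_A}l$ computes $l\lten_{A^u}l$; thus $l\lten_A l\iso l\lten_{A^u}l$. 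The same reasoning applies over $B$ once we know $l\in\per B$, i.e.\ that $B$ is smooth: this follows from the hypothesis, since $f^{u*}$ being a localisation makes restriction along $f^u$ fully faithful, so that $\RHom_{B^u}(l,l)\iso\RHom_{A^u}(l,l)$, and the latter is proper because $l\in\per A^u$; the smoothness of $B$ then follows by the arguments of Corollaries~\ref{cor:perfect} and~\ref{cor:smooth} (using also that the forgetful functor $\per B\to\per B^u$ is an equivalence, as in the proof of Lemma~\ref{lem:forgetful functor}). Consequently $l\lten_B l\iso l\lten_{B^u}l$ as well, compatibly with the morphisms induced by $f$ and $f^u$.

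Finally, the morphism $l\lten_{A^u}l\to l\lten_{B^u}l$ is obtained by applying $l\lten_{B^u}(-)\lten_{B^u}l$ to the isomorphism $B^u\lten_{A^u}B^u\iso B^u$ in $\cd((B^u)^e)$, which holds precisely because $f^{u*}$ is a localisation; hence it is an isomorphism. Combining this with the identifications of the previous paragraph, the morphism $l\lten_A l\to l\lten_B l$ is a quasi-isomorphism, so $C=0$ in $\cd(B^e)$ and $\mu$ is an isomorphism, as required.

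The heart of the matter — and the reason smoothness cannot be dispensed with — is the discrepancy between the pseudo-compact and the ordinary derived tensor product: the underived pseudo-compact tensor product is a completed tensor product, so in general $(B\lten_A B)^u$ is \emph{not} $B^u\lten_{A^u}B^u$, and one cannot compare $\mu$ directly with its non-pseudo-compact analogue. Smoothness is exactly what allows one to trade the troublesome infinite-dimensional bimodules $B$ and $B\lten_A B$ for the finite-dimensional module $l$, over which no completion intervenes; the remaining effort is the bookkeeping needed to justify detecting weak equivalences by contraction with $l$ and to identify $l\lten_B C\lten_B l$ with $\cone(l\lten_A l\to l\lten_B l)$.
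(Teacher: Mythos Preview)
Your strategy is dual to the paper's. The paper detects the isomorphism of $\mu$ via the functors $\RHom_{B^e}(-,\Hom_k(L,M))$ for $L,M\in\pvd B$, using that $\cd(B^e)^{\op}$ is compactly generated by $(\pvd B^e)^{\op}$ for \emph{any} pseudo-compact $B$; this reduces at once to the full faithfulness of the restriction $f_*\colon\pvd B\to\pvd A$, which is read off from the commutative square comparing $\pvd B,\pvd A$ with $\pvd B^u,\pvd A^u$ together with Lemma~\ref{lem:forgetful functor}. Your tensor-based detection instead forces you to assume $A$ and $B$ complete augmented (so as to have a single detecting bimodule $l^e$), which already narrows the scope below what the lemma asserts.

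The more serious problem is the circularity in your argument that $B$ is smooth. You show that $\RHom_{B^u}(l,l)$ is proper, but the implication (iii)$\Rightarrow$(i) of Corollary~\ref{cor:smooth} requires $B^!=\RHom_B(l,l)$, computed in the \emph{pseudo-compact} derived category, to be proper. Identifying $\RHom_B(l,l)$ with $\RHom_{B^u}(l,l)$ is exactly the content of Lemma~\ref{lem:forgetful functor} applied to $B$, and that lemma presupposes $B$ smooth. The equivalence $\per B\iso\per B^u$ you invoke does not rescue this: one can indeed show $l\in\per B^u$ (since $l\in\per A\iso\per A^u$ and $f^{u*}(l)\iso l$ by full faithfulness of $f^u_*$), but transporting this along $\per B\iso\per B^u$ only yields an object of $\per B$ whose \emph{underlying} module is quasi-isomorphic to $l$, not the statement that the pseudo-compact $B$-module $l$ itself lies in $\per B$---and it is the latter you need for your minimal-model comparison $l\lten_B l\iso l\lten_{B^u} l$. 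Without an independent argument for $l\in\per B$ (equivalently, for the smoothness of $B$), the step ``the same reasoning applies over $B$'' does not go through, and with it the whole reduction collapses.
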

\begin{proof}
Since the category $\cd(B^e)\op$ is compactly generated by $(\pvd(B^e))\op$, it suffices to show that the image of $\mu$ under the functor $\RHom_{B^e}(?,\Hom_k(L,M))$ is an isomorphism for all $L$ and $M$ in $\pvd B$. Since there are isomorphisms
\[
\begin{tikzcd}
\RHom_{B^e}(B,\Hom_k(L,M)) & \RHom_B(L,M)\arrow[swap]{l}{\sim}
\end{tikzcd}
\]
and
\[
\begin{tikzcd}
\RHom_{B^e}(B\lten_A B,\Hom_k(L,M))\arrow[no head]{r}{\sim} & \RHom_{A^e}(A,\Hom_k(L,M)) & \RHom_A(L,M)\arrow[swap]{l}{\sim}\: ,
\end{tikzcd}
\]
it suffices to show that the morphism
\begin{equation} \label{eq:restriction}
\begin{tikzcd}
\RHom_B(L,M)\arrow{r} & \RHom_A(L,M)
\end{tikzcd}
\end{equation}
induced by the restriction functor $f_*\colon \pvd B\to \pvd A$ is an isomorphism. Since the induction functor $f^{u*}\colon \cd(A^u)\to \cd(B^u)$ is a localisation, the restriction functor
\[
f^u_*\colon \pvd B^u\to \pvd A^u
\]
is fully faithful. By Lemma~\ref{lem:forgetful functor}, the vertical functors in the commutative square
\[
\begin{tikzcd} 
\pvd B \arrow{r}{f_*} \arrow{d} & \pvd A \arrow{d} \\
\pvd B^u \arrow[swap]{r}{f_*^u} & \pvd A^u
\end{tikzcd}
\]
are fully faithful. Thus, the functor $f_* \colon \pvd B \to \pvd A$ is fully faithful and the morphism (\ref{eq:restriction}) is an isomorphism.
\end{proof}

\subsection{Reminders on Calabi--Yau structures} \label{ss:reminders on Calabi--Yau structures}

In this section, we recall the necessary background on Hochschild and cyclic homology, absolute and relative Calabi--Yau structures. We work in the setting of (pseudo-compact) dg algebras but everything generalises to the setting of small (pseudo-compact) dg categories.

Following section~1 of \cite{Kassel85}, a {\em mixed complex} over $k$ is a dg module over the dg algebra $\La=k[t]/(t^2)$, where $t$ is an indeterminate of degree $-1$ satisfying $d(t)=0$. Let $l$ be a finite-dimensional separable $k$-algebra. For a dg $l$-algebra $A$, its {\em bar resolution} $B(A)$ is defined to be the sum total complex of
\[
\begin{tikzcd}
  \cdots\arrow{r} & A^{\ten_l 4} \arrow{r}{b'} & A^{\ten_l 3} \arrow{r}{b'} & A^{\ten_l 2}\: ,
\end{tikzcd}
\]
where the differential of $A^{\ten_l p}$ sends $a_1\ten\cdots\ten a_p$ to
\[
\sum_{i=1}^p (-1)^{i-1+|a_1|+\cdots+|a_{i-1}|}a_1\ten\cdots\ten d(a_i)\ten\cdots\ten a_p
\]
and the differential $b'$ sends $a_1\ten\cdots\ten a_p$ to 
\[
\sum_{i=1}^{p-1}(-1)^{i-1+|a_1|+\cdots+|a_i|}a_1\ten\cdots\ten a_i a_{i+1}\ten\cdots\ten a_p\: .
\]
Its {\em mixed complex} $M(A)$ is defined as follows. Its underlying complex is defined to be the cone of the map $\id-\tau$ from the sum total complex $B^+(A)_l$ of
\[
\begin{tikzcd}
  \cdots\arrow{r} & (A^{\ten_l 3})_l \arrow{r}{b'} & (A^{\ten_l 2})_l \arrow{r}{b'} & A_l
\end{tikzcd}
\]
to the sum total complex $C(A)$ of
\[
\begin{tikzcd}
  \cdots\arrow{r} & (A^{\ten_l 3})_l \arrow{r}{b} & (A^{\ten_l 2})_l \arrow{r}{b} & A_l \: .
\end{tikzcd}
\]
Here $\tau$ sends $a_1\ten\cdots\ten a_p$ to
\[
(-1)^{(|a_p|+1)(p-1+|a_1|+\cdots+|a_{p-1}|)}a_p\ten a_1\ten \cdots\ten a_{p-1}\: ,
\]
the differential of $(A^{\ten_l p})_l$ is given by the same formula as above, the map $b$ is the differential of the Hochschild chain complex and $b'$ is induced by that of the augmented bar resolution. Explicitly, the differential $b$ sends $a_1\ten\cdots\ten a_p$ to
\begin{align*}
& \sum_{i=1}^{p-1}(-1)^{i-1+|a_1|+\cdots+|a_i|} a_1\ten\cdots\ten a_i a_{i+1}\ten\cdots\ten a_p \\
& +(-1)^{(|a_p|+1)(p+|a_1|+\cdots+|a_{p-1}|)-1}a_p a_1\ten\cdots\ten a_{p-1}
\end{align*}
and $b'$ is given by the same formula as above. The $\La$-module structure on $M(A)$ is determined by the action of $t$, which vanishes on $B^+(A)_l$ and sends the component $(A^{\ten_l p})_l$ of $C(A)$ to the corresponding component of $B^+(A)_l$ via the map $\sum_{i=0}^{p-1}\tau^i$.

The {\em Hochschild complex} $HH(A)$ of $A$ is defined to be the underlying complex of $M(A)$. 
Via the canonical projection from the underlying complex of $M(A)$ to $C(A)$, it is quasi-isomorphic to $C(A)\liso A\lten_{A^e}A$ because each row of $B^+(A)_l$ is acyclic.
The {\em cyclic complex} $HC(A)$ of $A$ is defined to be the complex $M(A)\lten_\La k$. The {\em negative cyclic complex} $HN(A)$ of $A$ is defined to be the complex $\RHom_\La (k,M(A))$. The {\em periodic cyclic complex} $HP(A)$ of $A$ is defined to be the derived inverse limit of the system
\[
\begin{tikzcd}
\cdots \arrow{r} & \Si^{-4}\RHom_\La (k,M(A))\arrow{r} & \Si^{-2}\RHom_\La (k,M(A))\arrow{r} & \RHom_\La (k,M(A))\: ,
\end{tikzcd}
\]
where the transition maps are induced by the canonical map $k \to \Si^2 k$.
For a dg $A$-bimodule $M$, the {\em Hochschild complex of $A$ with coefficients in $M$} denoted by $HH(A,M)$ is defined to be the sum total complex of
\[
\begin{tikzcd}
  \cdots\arrow{r} & (M\ten_l A^{\ten_l 2})_l \arrow{r}{b} & (M\ten_l A)_l \arrow{r}{b} & M_l \: ,
\end{tikzcd}
\]
where the differential of $(M\ten_l A^{\ten_l p})_l$ sends $m\ten a_1\ten\cdots\ten a_p$ to
\[
d(m)\ten a_1 \ten\cdots\ten a_p +\sum_{i=1}^p (-1)^{i+|m|+|a_1|+\cdots+|a_{i-1}|}m\ten a_1 \ten\cdots\ten d(a_i)\ten\cdots\ten a_p
\]
and the differential $b$ sends $m\ten a_1\ten\cdots\ten a_p$ to
\begin{align*}
& (-1)^{|m|}ma_1\ten\cdots\ten a_p \\
& +\sum_{i=1}^{p-1}(-1)^{i+|m|+|a_1|+\cdots+|a_i|} m\ten a_1\ten\cdots\ten a_i a_{i+1}\ten\cdots\ten a_p \\
& +(-1)^{(|a_p|+1)(p+1+|m|+|a_1|+\cdots+|a_{p-1}|)-1}a_p m\ten a_1\ten\cdots\ten a_{p-1}\: .
\end{align*}
Their homologies are called {\em Hochschild homology} $HH_*(A)$, {\em cyclic homology} $HC_*(A)$, {\em negative cyclic homology} $HN_*(A)$, {\em periodic cyclic homology} $HP_*(A)$, {\em Hochschild homology with coefficients in $M$} denoted by $HH_*(A,M)$ respectively. Because $l$ is separable, the \linebreak $l$-coinvariant functor is exact. This implies that the mixed complexes of $A$ and the Hochschild complexes of $A$ with coefficients in $M$, where $A$ and $M$ are considered as graded $l$-bimodules and considered as graded vector spaces, are quasi-isomorphic. We refer the reader to section~6.1 of \cite{VandenBergh15} for the analogues of the above constructions for pseudo-compact dg algebras (where sum total complexes have to be replaced with product total complexes). In the sequel, we use them freely.

Fix an integer $d$. For a smooth pseudo-compact dg $l$-algebra $A$, a {\em left (absolute) \linebreak $d$-Calabi--Yau structure} on $A$ is a class $[\tilde{\xi}]$ in $HN_d(A)$ which is {\em non-degenerate}, \ie the morphism $\Si^d A^\vee \to A$ in $\cd(A^e)$ obtained from the image $[\xi]$ of $[\tilde{\xi}]$ under the canonical map $HN_d(A)\to HH_d(A)$ via
\[
\begin{tikzcd}
HH_d(A)\arrow[no head]{r}{\sim} & H^{-d}(A\lten_{A^e}A)\arrow{r} & H^{-d}(\RHom_{A^e}^{pc}(A^\vee,A))\arrow[no head]{r}{\sim} & \Hom_{\cd(A^e)}^{pc}(\Si^d A^\vee,A)
\end{tikzcd}
\]
is an isomorphism, where we denote the derived bimodule dual $\RHom_{A^e}^{pc}(A,A^e)$ of $A$ by $A^\vee$.

For a homologically finite dg $l$-algebra $A$ (not supposed to be pseudo-compact), a {\em right (absolute) $d$-Calabi--Yau structure} on $A$ is a class $[\tilde{x}]$ in $DHC_{-d}(A)$ which is {\em non-degenerate}, \ie the morphism $A\to \Si^{-d}DA$ in $\cd(A^e)$ obtained from the image $[x]$ of $[\tilde{x}]$ under the canonical map $DHC_{-d}(A)\to DHH_{-d}(A)$ via 
\[
DHH_{-d}(A)\simeq H^{-d}(D(A\lten_{A^e}A))\simeq H^{-d}(\RHom_{A^e}(A, DA))\simeq \Hom_{\cd(A^e)}(A,\Si^{-d}DA)
\]
is an isomorphism.

Recall that a $k$-linear $\Hom$-finite triangulated category $\cc$ is {\em $d$-Calabi--Yau} if it is endowed with bifunctorial isomorphisms
\[
\Hom(X, Y) \xlongrightarrow{_\sim} D\Hom(Y, \Si^d X) \: ,
\]
where $X$ and $Y$ lie in $\cc$. By definition, a right $d$-Calabi--Yau structure on a pretriangulated dg category $\ca$ yields a $d$-Calabi--Yau structure on the triangulated category $H^0(\ca)$.

The construction of the mixed complex $M(A)$ is functorial with respect to (not necessarily unital!) morphisms between dg algebras as defined in section~3.4 of \cite{KellerLiu23}. For a morphism $f\colon B\to A$ between dg algebras, its {\em relative mixed complex} $M(A,B)$ is defined to be the cone of the induced morphism from $M(B)$ to $M(A)$. The {\em relative} version of each type of complex and homology is obtained by applying the constructions of the absolute version to the relative mixed complex $M(A,B)$. The construction of the relative mixed complex and the associated homologies for a morphism between pseudo-compact dg algebras is analogous.

For a morphism $f\colon B\to A$ between smooth pseudo-compact dg algebras, a {\em left relative $d$-Calabi--Yau structure} on $f$ is a class $[(\tilde{\xi_A},\si \tilde{\xi_B})]$ in $HN_d(A,B)$ which is {\em non-degenerate}, \ie the morphism $[\hat{\xi_B}\si^{1-d}]\colon \Si^{d-1}B^\vee \to B$ in $\cd(B^e)$ and the morphism of triangles
\[
\begin{tikzcd}
  \Si^{d-1}A^\vee\arrow{rr}{(-1)^{d-1}\Si^{d-1}\mu^\vee}\arrow{d}{\Si^{-1}[\hat{\xi}'']} & & \Si^{d-1}(A\lten_B A)^\vee\arrow{r}\arrow{d}{\mathbf{L}f^{e*}[\hat{\xi_B}\si^{1-d}]}& \Si^{d}\cone(\mu)^\vee\arrow{r}\arrow{d}{[\hat{\xi}']} & \Si^d A^\vee\arrow{d}{[\hat{\xi}'']}\\
  \Si^{-1}\cone(\mu)\arrow{rr} & & A\lten_{B}A\arrow[swap]{r}{\mu} & A\arrow{r} & \cone(\mu)
\end{tikzcd}
\]
in $\cd(A^e)$ obtained from the image $[(\xi_A,\si \xi_B)]$ of $[(\tilde{\xi_A},\si \tilde{\xi_B})]$ under the canonical map $HN_d(A,B)\to HH_d(A,B)$ are isomorphisms. Here we denote the induction functor $\cc(B^e) \to \cc(A^e)$ by $f^{e*}$.
Let us write $X(A)$ for a cofibrant resolution of $A$ as a pseudo-compact dg $A$-bimodule and similarly for $B$. We denote the morphism of degree $-d$ corresponding to $\xi_A\in A\ten_{A^e}X(A)$ by $\hat{\xi_A}\colon X(A)^\vee \to A$ and the morphism of degree $1-d$ corresponding to $\xi_B\in B\ten_{B^e}X(B)$ by $\hat{\xi_B}\colon X(B)^\vee \to B$. We denote by $\hat{\xi}'$ and $\hat{\xi}''$ the morphisms
\[
\begin{bmatrix}
(-1)^{1-d}\mu\circ f^{e*}(\hat{\xi_B}\si^{1-d}) & -\hat{\xi_A}\si^{-d}
\end{bmatrix}
\colon \Si^{d}\cone(\mu)^\vee \longrightarrow A
\]
respectively
\[
\begin{bmatrix}
(-1)^{d-1}\hat{\xi_A}\si^{-d} \\
(-1)^{d-1} \Si f^{e*}(\hat{\xi_B}\si^{1-d})\circ \Si^d\mu^\vee
\end{bmatrix}
\colon \Si^{d}X(A)^\vee \longrightarrow \cone(\mu)
\]
in $\cc(A^e)$ and use the identification $(A\lten_B A)^\vee \simeq A\lten_B B^\vee\lten_B A$. In particular, the class $[\tilde{\xi_B}]$ in $HN_{d-1}(B)$ is a left $(d-1)$-Calabi--Yau structure on $B$. If the dg algebra $B$ vanishes, then we recover the absolute notion.

For a morphism $f\colon A\to B$ between homologically finite dg algebras (not supposed to be pseudo-compact), a {\em right relative $d$-Calabi--Yau structure} on $f$ is a class $[(\si \tilde{x_B},\tilde{x_A})]$ in $DHC_{1-d}(B,A)$ which is {\em non-degenerate}, \ie the morphism $[\si^{1-d}\hat{x_B}]\colon B\to \Si^{1-d}DB$ in $\cd(B^e)$ and the morphism of triangles
\[
\begin{tikzcd}
  \cocone(f)\arrow{r}\arrow{d}{[\hat{x}'']}&A\arrow{r}{f}\arrow{d}{[\hat{x}']}&B\arrow{rr}\arrow{d}{f^e_*[\si^{1-d}\hat{x_B}]}& &\Si\cocone(f)\arrow{d}{\Si[\hat{x}'']}\\
  \Si^{-d}DA\arrow{r}&\Si^{-d}D\cocone(f)\arrow{r}&\Si^{1-d}DB\arrow[swap]{rr}{(-1)^{1-d}\Si^{1-d}Df}& &\Si^{1-d}DA
\end{tikzcd}
\]
in $\cd(A^e)$ obtained from the image $[(\si x_B,x_A)]$ of $[(\si \tilde{x_B},\tilde{x_A})]$ under the canonical map $DHC_{1-d}(B,A)\to DHH_{1-d}(B,A)$ are isomorphisms. Here we denote the restriction functor $\cc(B^e) \to \cc(A^e)$ by $f^e_*$.
Let us write $X(A)$ for a cofibrant resolution of $A$ as a dg $A$-bimodule and similarly for $B$. We denote the morphism of degree $-d$ corresponding to $x_A\in D(A\ten_{A^e}X(A))$ by $\hat{x_A}\colon X(A) \to DA$ and the morphism of degree $1-d$ corresponding to $x_B\in D(B\ten_{B^e}X(B))$ by $\hat{x_B}\colon X(B) \to DB$. We denote by $\hat{x}'$ and $\hat{x}''$ the morphisms
\[
\begin{bmatrix}
(-1)^{d-1}f^e_*(\si^{1-d}\hat{x_B})\circ f \\
(-1)^{d-1}\si^{-d}\hat{x_A}
\end{bmatrix}
\colon A \longrightarrow \Si^{-d}D\cocone(f)
\]
respectively
\[
\begin{bmatrix}
(-1)^{1-d}\Si^{-d}Df\circ \Si^{-1}f^e_*(\si^{1-d}\hat{x_B}) & -\si^{-d}\hat{x_A}
\end{bmatrix}
\colon \cocone(f) \longrightarrow \Si^{-d}DA
\]
in $\cc(A^e)$. In particular, the class $[\tilde{x_B}]$ in $DHC_{1-d}(B)$ is a right $(d-1)$-Calabi--Yau structure on $B$. If the dg algebra $B$ vanishes, then we recover the absolute notion.

\subsection{Bijection between left and right Calabi--Yau structures in the pseudo-compact setting} \label{ss:left and right}

If $A$ is a complete $l$-augmented pseudo-compact dg algebra, by Corollary~15.2 of \cite{VandenBergh15}, we have a quasi-isomorphism $M(A^!)\to \D M(A)$. Since the functor $\D\colon (\Mod k)\op \to \PC k$ is an equivalence, it yields a quasi-isomorphism $M(A)\to \D M(A^!)$ of (pseudo-compact) mixed complexes. Therefore, we have the canonical quasi-isomorphism
\[
HN(A)=\RHom_\La (k,M(A))\longrightarrow \RHom_\La (k,\D M(A^!))\simeq \D(k\lten_\La M(A^!))\simeq \D HC(A^!)\: .
\]

\begin{proposition} \label{prop:left and right}
Let $A$ be a smooth complete $l$-augmented pseudo-compact dg algebra. Then a class in $HN_d(A)$ is a left $d$-Calabi--Yau structure on $A$ if and only if its image under the $k$-linear bijection
\[
\begin{tikzcd}
HN_d(A)\arrow{r}{\sim} & \D HC_{-d}(A^!)=DHC_{-d}(A^!) 
\end{tikzcd}
\]
is a right $d$-Calabi--Yau structure on $A^!$.
\end{proposition}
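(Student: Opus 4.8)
The plan is to express both non-degeneracy conditions as the invertibility of an explicit morphism of dg bimodules, and then to compare these conditions by means of Koszul duality.

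First I would reduce the statement to the level of Hochschild homology. By construction, the bijection $HN_d(A)\iso DHC_{-d}(A^!)$ is obtained by applying $\RHom_\Lambda(k,-)$ to the quasi-isomorphism $M(A)\to\D M(A^!)$ of mixed complexes from Corollary~15.2 of \cite{VandenBergh15} and using the natural isomorphism $\RHom_\Lambda(k,\D N)\simeq\D(k\lten_\Lambda N)$. Since $M(A)\to\D M(A^!)$ is a morphism of dg $\Lambda$-modules, applying $\RHom_\Lambda(-,-)$ to it together with the augmentation $\Lambda\to k$ produces a commutative square comparing this bijection, the bijection $HH_d(A)\iso DHH_{-d}(A^!)$ induced by $M(A)\to\D M(A^!)$ on underlying Hochschild complexes, and the two canonical maps $HN_d(A)\to HH_d(A)$ and $DHC_{-d}(A^!)\to DHH_{-d}(A^!)$. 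It therefore suffices to show: if $[\xi]\in HH_d(A)$ has image $[x]\in DHH_{-d}(A^!)$, then the bimodule morphism $\phi\colon\Si^d A^\vee\to A$ in $\cd(A^e)$ attached to $[\xi]$ is invertible if and only if the bimodule morphism $\psi\colon A^!\to\Si^{-d}D(A^!)$ in $\cd((A^!)^e)$ attached to $[x]$ is invertible. Here $\phi$ is produced using that $A$ is smooth, so that the canonical map $A\lten_{A^e}A\to\RHom_{A^e}(A^\vee,A)$ is an isomorphism; and $\psi$ is produced using that $A^!$ is proper (Corollary~\ref{cor:smooth}), so that tensor--hom adjunction gives $D(A^!\lten_{(A^!)^e}A^!)\simeq\RHom_{(A^!)^e}(A^!,D(A^!))$.

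Next I would translate each invertibility condition into a Calabi--Yau property of a triangulated category. As $A$ is smooth, $\pvd A$ is contained in $\per A$; testing $\phi$ against the bimodules $\Hom_k(M,N)$ with $M$, $N$ in $\pvd A$ (which generate $\pvd(A^e)$ and so detect isomorphisms in $\cd(A^e)$) shows that $\phi$ is invertible if and only if the maps $\RHom_A(M,N)\to D\RHom_A(N,\Si^d M)$ induced by $[\xi]$ are quasi-isomorphisms for all $M$, $N$ in $\pvd A$ --- that is, if and only if $[\xi]$ endows the triangulated category $\pvd A$ with a $d$-Calabi--Yau structure. Dually, since $A^!$ is proper, the same kind of argument (propagating from the bimodule $A^!$ itself to all of $\per A^!$) shows that $\psi$ is invertible if and only if $[x]$ endows $\per A^!$ with a $d$-Calabi--Yau structure.

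The remaining step, which I expect to be the main obstacle, is to match these two Calabi--Yau structures via Koszul duality. Koszul duality supplies a triangle equivalence of pretriangulated dg categories $\pvd\!_{dg}A\iso\per\!_{dg}A^!$. What has to be checked is that this equivalence is compatible with Hochschild homology in the precise sense that the induced isomorphism $DHH_{-d}(\pvd\!_{dg}A)\iso DHH_{-d}(A^!)$, precomposed with the canonical map $HH_d(A)=HH_d(\per\!_{dg}A)\to DHH_{-d}(\pvd\!_{dg}A)$ available because $A$ is smooth, is the Hochschild-level Koszul-duality bijection $HH_d(A)\iso DHH_{-d}(A^!)$ underlying Corollary~15.2 of \cite{VandenBergh15}. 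Granting this, the $d$-Calabi--Yau structure on $\pvd A$ determined by $[\xi]$ corresponds, under the equivalence, to the one on $\per A^!$ determined by $[x]$, so the two conditions of the previous paragraph are equivalent and $\phi$ is invertible precisely when $\psi$ is, as desired. The difficulty is exactly in this compatibility: it is not a question of whether the relevant categories are equivalent --- they are, by general Koszul duality --- but of tracking Van den Bergh's explicit quasi-isomorphism of Hochschild complexes through the equivalence, a delicate naturality and sign computation, and it is here that completeness and pseudo-compactness of $A$ are genuinely used (Davison's examples show that the bijection can fail without such hypotheses).
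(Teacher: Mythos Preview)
Your first reduction step --- passing from negative cyclic/cyclic homology to Hochschild homology via the commutative square coming from the map $M(A)\to\D M(A^!)$ --- is exactly what the paper does. The divergence is in how you then check that the Hochschild-level bijection preserves and detects non-degeneracy.

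You reformulate each non-degeneracy condition as a Calabi--Yau property of a triangulated category ($\pvd A$ on one side, $\per A^!$ on the other) and then aim to match these via the Koszul-duality equivalence $\pvd\!_{dg}A\simeq\per\!_{dg}A^!$. As you yourself note, this leaves as the ``main obstacle'' a compatibility between Van den Bergh's explicit quasi-isomorphism of Hochschild complexes and the trace maps on the two categories --- a check you do not carry out. So as written, the proposal has a genuine gap at precisely the point you flag.

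The paper avoids this detour entirely. Rather than testing $\phi\colon\Si^d A^\vee\to A$ against all of $\pvd(A^e)$, it observes that $\pvd(A^e)$ is generated by the single object $\Hom_k(l,l)$, so $\phi$ is invertible if and only if $\RHom_{A^e}(\phi,\Hom_k(l,l))$ is. Unwinding this with the adjunctions
\[
\RHom_{A^e}(A,\Hom_k(l,l))\simeq\RHom_A(l,l)\quad\mbox{and}\quad\RHom_{A^e}(\Si^d A^\vee,\Hom_k(l,l))\simeq D\RHom_{A\op}(l,\Si^d l)
\]
gives a single morphism $t\colon\RHom_A(l,l)\to D\RHom_{A\op}(l,\Si^d l)$, and one checks directly that $t$ coincides with the morphism $A^!\to\Si^{-d}D(A^!)$ attached to the image class in $DHH_{-d}(A^!)$. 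This is a computation on the single object $l$ rather than a categorical compatibility statement, and it closes the argument in one line. The upshot: your route is not wrong, but it trades a short direct calculation for a global naturality check that you do not complete; the paper's insight is that testing on the generator $l$ suffices and makes the compatibility transparent.
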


\begin{remark} The necessity of the condition holds for arbitrary dg algebras (and categories), \cf Lemma~4.1 of \cite{Keller08d} and Theorem~3.1 of \cite{BravDyckerhoff19}.
\end{remark}

\begin{proof}[Proof of Proposition~\ref{prop:left and right}]
Since the composed bijection fits into the following commutative square
\[
\begin{tikzcd}
HN_d(A)\arrow{r}\arrow{d} & DHC_{-d}(A^!)\arrow{d} \\
HH_d(A)\arrow{r} & DHH_{-d}(A^!)\mathrlap{\: ,}
\end{tikzcd}
\]
it suffices to prove that the bottom horizontal map preserves and detects non-degeneracy. Let $[\xi]$ be a class in $HH_d(A)$. Since $A$ is a complete $l$-augmented pseudo-compact dg algebra, the category $\cd(A^e)\op$ is compactly generated by $(\pvd A^e)\op$ and the category $\pvd A^e$ is generated by the object $\Hom_k(l,l)$. Therefore, the morphism $\phi \colon \Si^d A^\vee \to A$ in $\cd(A^e)$ obtained from the $[\xi]$ is an isomorphism if and only if its image $\phi^*$ under the functor $\RHom_{A^e}(?,\Hom_k(l,l))$ is an isomorphism. 
We compose and precompose $\phi^*$ with natural isomorphisms to obtain the following 
morphism $t\colon$
\begin{align*}
\RHom_A(l,l) & \simeq \RHom_{A^e}(A,\Hom_k(l,l)) \\
& \to \RHom_{A^e}(\Si^d A^\vee,\Hom_k(l,l)) \\
& \simeq \RHom_{A\op}(\Si^d A^\vee \lten_A \, l,l) \\
& \simeq D\RHom_{A\op}(l,\Si^d l)\: .
\end{align*}
One checks that $t$ agrees with the morphism $A^! \to \Si^{-d}A^!$ corresponding to the
image of $[\xi]$ in $DHH_{-d}(A^!)$. This implies the claim.
\end{proof}

\subsection{Deformed dg preprojective algebras and Ginzburg--Lazaroiu morphisms} \label{ss:deformed dg preprojective algebras}

We introduce a special class of augmented pseudo-compact dg algebras and a special class of augmented morphisms between pseudo-compact dg algebras. Let $k$ be a field of characteristic $0$ and $d\geq 2$ an integer. For a dg algebra $A$, we use the notation $\Tr(A)$ for the quotient complex $A/[A,A]$ of $A$ by the subcomplex generated by the graded commutators. Let $l$ be a finite-dimensional semisimple $k$-algebra. Following \cite{VandenBergh15}, let $\si$ be the Casimir element associated with a given trace on $l$. Suppose that we are given a triple $(V_c,\eta,w)$ satisfying the following assumptions.

\begin{assumptions}\mbox{}
\begin{itemize}
\item[a)] $V_c$ is a pseudo-compact graded $l$-bimodule of finite total dimension concentrated in degrees $[2-d,0]$.
\item[b)] $\eta$ is a non-degenerate and graded anti-symmetric element of $V_c\ten_{l^e}V_c$ which is of degree $2-d$.
\item[c)] $w$ is an element of $\Tr(T_l V_c)$ which is of degree $3-d$ and only contains cubic and higher terms such that we have $\{w,w\}_{\omega_\eta}=0$.
\end{itemize}
\end{assumptions}

The {\em $d$-dimensional deformed dg preprojective algebra} associated with the above triple $(V_c,\eta,w)$ is the $l$-augmented pseudo-compact dg algebra
\[
\Pi=\Pi_d(l,V_c,\eta,w)=(T_l(V_c \oplus zl),d)\: ,
\]
where $z$ is an $l$-central indeterminate of degree $1-d$. The topology is the product topology. The unit and the augmentation maps are the natural ones. The differential is determined by
\[
d(v)=\{w,v\}_{\omega_\eta}\mbox{ for all }v\in V_c \quad \mbox{and}\quad d(z)=\si' \eta\, \si''\: .
\]
By section~9.2 of \cite{VandenBergh15}, the algebra $\Pi$ is an honest dg algebra. By the implication from (2) to (1) in Theorem~9.4 of \cite{VandenBergh15}, the $d$-dimensional deformed dg preprojective algebra $\Pi$ carries a left $d$-Calabi--Yau structure. Many examples of deformed dg preprojective algebras arise as deformed Calabi--Yau completions as introduced in \cite{Keller11b}.

Let $\ol{l}$ and $l_F$ be finite-dimensional semisimple $k$-algebras. Denote their product by $l$ and the canonical algebra inclusion $l_F \to l$ by $\phi$. Let $\ol{\si}$ and $\si_F$ be the Casimir elements associated with given trace on $\ol{l}$ respectively $l_F$. Denote their sum by $\si$. Suppose that we are given a quintuple $(N,F,\eta,w,w_F)$ satisfying the following assumptions.
\begin{assumptions}\mbox{}
\begin{itemize}
\item[a)] $F$ is a pseudo-compact graded $l_F$-bimodule of finite total dimension concentrated in degrees $[\frac{3-d}{2},0]$ and $N$ is a pseudo-compact graded $l$-bimodule of finite total dimension concentrated in degrees $[2-d,0]$.
\item[b)] $\eta$ is a non-degenerate and graded anti-symmetric element of $N\ten_{l^e}N$ which is of degree $2-d$.
\item[c)] $w_F$ is an element of $\Tr(T_{l_F}(F\oplus R))$ which is of degree $4-d$ and only contains cubic and higher terms such that we have $\{w_F, w_F\}_{\omega_{\eta_F}}=0$. $w$ is an element of $\Tr(T_l(F\oplus N))$ which is of degree $3-d$ and only contains cubic and higher terms such that the element $w_F+\{w_F,w\}_{\omega_{\eta_F}}+\frac{1}{2}\{w,w\}_{\omega_\eta}$ lies in the kernel of the canonical surjection $\Tr(T_l(F\oplus R\oplus N)) \to \Tr(T_l(F\oplus N))$.
\end{itemize}
\end{assumptions}

Here we define $R=\Si^{d-3}DF$ and denote by $\eta_F$ the image of the identity $\id_{F}$ under the composed map
\[
\begin{tikzcd}
 \Hom_k(F,F) & F\ten_k DF \arrow[swap]{l}{\sim} \arrow{r} & (F\oplus R)\ten_{l_F^e}(F\oplus R)\: ,
 \end{tikzcd}
\]
where the first map is the canonical graded $k$-linear bijection and the second map sends $a\ten b$ to
\[
(-1)^{(d-3)|a|}a\ten s^{d-3}b-(-1)^{|a||b|}s^{d-3}b\ten a\: .
\]
Clearly, the element $\eta_F$ is non-degenerate and graded anti-symmetric of degree $3-d$.

The {\em $d$-dimensional Ginzburg--Lazaroiu morphism} associated with the above quintuple $(N,F,\eta,w,w_F)$ is the $\phi$-augmented morphism
\[
\gamma=\gamma_d(\phi,N,F,\eta,w,w_F) \colon (T_{l_F}(F\oplus R\oplus z_F\, l_F),d)\longrightarrow (T_l(F\oplus N\oplus z\ol{l}),d)
\]
between pseudo-compact dg algebras, where $z_F$ is an $l_F$-central indeterminate of degree $2-d$ and $z$ is an $l$-central indeterminate of degree $1-d$ whose annihilator is $\ol{l}$. The topology of both pseudo-compact graded algebras are the product topology. The unit and the augmentation maps of both augmented pseudo-compact graded algebras are the natural ones. The differential of $T_{l_F}(F\oplus R\oplus z_F\, l_F)$ is determined by
\[
d(v)=\{w_F,v\}_{\omega_{\eta_F}}\mbox{ for all }v\in F\oplus R\quad \mbox{and}\quad d(z_F)=\si_F'\, \eta_F\, \si_F''\: .
\]
The differential of $T_l(F\oplus N\oplus z\ol{l})$ is determined by
\[
d(v)=\{w_F,v\}_{\omega_{\eta_F}}\mbox{ for all }v\in F\ko d(v)=\{w,v\}_{\omega_\eta}\mbox{ for all }v\in N\quad\mbox{and}\quad d(z)=\ol{\si}'\eta \,\ol{\si}''\: .
\]
The morphism $\gamma$ is determined by
\[
\gamma(v)=v\mbox{ for all }v\in F\ko \gamma(v)=-\{w,v\}_{\omega_{\eta_F}}\mbox{ for all }v\in R\quad \mbox{and}\quad \gamma(z_F)=\si_F'\, \eta \, \si_F''\: .
\]
By Propositions~4.2.3, 4.2.4, 4.2.5, Lemmas~4.2.6 and 4.2.7 of \cite{KellerLiu23}, the source and target of $\gamma$ are honest algebras and $\gamma$ is an honest morphism between dg algebras. By the implication from i) to ii) in Theorem~4.3.1 of \cite{KellerLiu23}, the $d$-dimensional Ginzburg--Lazaroiu morphism $\gamma$ carries a left relative $d$-Calabi--Yau structure. Many examples of Ginzburg--Lazaroiu morphisms arise as deformed relative Calabi--Yau completions as introduced in \cite{Yeung16} and studied, for example, in \cite{BozecCalaqueScherotzke20, Wu23}. The target $\Pi=\Pi_d(\phi,N,F,\eta,w,w_F)$ of $\gamma$ is called the {\em $d$-dimensional deformed relative dg preprojective algebra} associated with the quintuple $(N,F,\eta,w,w_F)$. If the algebra $l_F$ vanishes, then we recover the absolute notion.

\subsection{Cluster categories and Higgs categories} \label{ss:cluster categories and Higgs categories}

Let $d$ be a positive integer and $\ce$ an extriangulated category. We refer the reader to section~3.1 of \cite{GorskyNakaokaPalu21} and section~3.4 of \cite{Chen24} for the definition of the higher extension bifunctors $\mathbb{E}^i(?,-)$. In the sequel, we use them freely. A subcategory $\ct \subseteq \ce$ is {\em $d$-cluster-tilting} if it is functorially finite and we have 
\begin{align*}
\ct &=\{X\in \ce \mid \mathbb{E}^i(T,X)=0\mbox{ for all objects }T\in \ct\mbox{ and }i=1,\ldots,d-1\} \\
     &=\{X\in \ce \mid \mathbb{E}^i(X,T)=0\mbox{ for all objects }T\in \ct\mbox{ and }i=1,\ldots,d-1\}\: .
\end{align*}
An object $T$ in $\ce$ is {\em $d$-cluster-tilting} if the subcategory $\add T\subseteq \ce$ is 
$d$-cluster-tilting. In particular, an object is $1$-cluster-tilting if and only if it is an additive generator.
A subcategory $\ct \subseteq \ce$ is {\em $d$-rigid} if we have $\mathbb{E}^i(T,T')=0$ for all objects $T,T'\in \ct$ and all integers $i=1$, $\ldots$, $d-1$.

Now we assume that the field $k$ is of characteristic $0$. Let $\Pi=\Pi_{d+1}(l, V_c,\eta,w)$ be a $(d+1)$-dimensional deformed dg preprojective algebra. The associated {\em cluster category} $\cc_\Pi$ is defined \cite{Amiot09,Plamondon11,Guo11} as the idempotent completion of the Verdier quotient $\per \Pi/\pvd \Pi$. If $d$ is greater than or equal to $2$ and $H^0(\Pi)$ is finite-dimensional, then the Verdier quotient $\per \Pi/\pvd \Pi$ is Karoubian, ($\Hom$-finite,) $d$-Calabi--Yau and contains $\Pi$ as a canonical \linebreak $d$-cluster-tilting object with endomorphism algebra isomorphic to $H^0(\Pi)$, \cf Theorem~2.2 of \cite{Guo11}.

Let $\gamma=\gamma_{d+1}(\phi,N,F,\eta,w,w_F)$ be a $(d+1)$-dimensional Ginzburg--Lazaroiu morphism with target $\Pi$. The associated {\em relative cluster category} $\cc_\gamma$ is defined \cite{Wu23,KellerWu23} as the idempotent 
completion of the Verdier quotient $\per \Pi/\pvd\!_F \Pi$, where $\pvd\!_F \Pi$ denotes the triangulated subcategory of $\pvd \Pi$ generated by the dg $\Pi$-modules which restrict to zero on the source of $\gamma$. If $d$ is greater than or equal to $2$ and $H^0(\Pi)$ is finite-dimensional, then the Verdier quotient $\per \Pi/\pvd\!_F \Pi$ is Karoubian and $\Hom$-finite, \cf Corollaries~4.17 and 4.19 of \cite{Wu23}. If the algebra $l_F$ vanishes, then we recover the absolute notion.
If $d$ equals $1$, the associated {\em Higgs category} $\ch_\gamma$ is defined as the full subcategory $\add \Pi$ of $\cc_\gamma$. If $d$ is greater than or equal to $2$, the associated {\em Higgs category} $\ch_\gamma$ is defined \cite{Wu23,KellerWu23} as the full subcategory of $\cc_\gamma$ whose objects are the $X$ such that the vector spaces $\cc_\gamma(\Pi,\Si^i X)$ are finite-dimensional for all $i=1$, $\ldots$, $d-1$ and there exist triangles
\[
\begin{tikzcd}
X_1 \arrow{r}{f_1} & M_0 \arrow{r} & X \arrow{r} & \Si X_1\: ,
\end{tikzcd}
\]
\[
\begin{tikzcd}
X_2 \arrow{r}{f_2} & M_1 \arrow{r} & X_1 \arrow{r} & \Si X_2\: ,
\end{tikzcd}
\]
\[
\ldots
\]
\[
\begin{tikzcd}
X_{d-2} \arrow{r}{f_{d-2}} & M_{d-3} \arrow{r} & X_{d-3} \arrow{r} & \Si X_{d-2}\: ,
\end{tikzcd}
\]
\[
\begin{tikzcd}
M_{d-1} \arrow{r}{f_{d-1}} & M_{d-2} \arrow{r} & X_{d-2} \arrow{r} & \Si M_{d-1}
\end{tikzcd}
\]
in $\cc_\gamma$ with all $M_i$ in $\add \Pi$ and all $\cc_\gamma(f_i,l_F\Pi)$ surjective, and there exist triangles
\[
\begin{tikzcd}
X \arrow{r} & N_0 \arrow{r}{g_1} & X'_1 \arrow{r} & \Si X\: ,
\end{tikzcd}
\]
\[
\begin{tikzcd}
X'_1 \arrow{r} & N_1 \arrow{r}{g_2} & X'_2 \arrow{r} & \Si X'_1 \: ,
\end{tikzcd}
\]
\[
\ldots
\]
\[
\begin{tikzcd}
X'_{d-3} \arrow{r} & N_{d-3} \arrow{r}{g_{d-2}} & X'_{d-2} \arrow{r} & \Si X'_{d-3}\: ,
\end{tikzcd}
\]
\[
\begin{tikzcd}
X'_{d-2} \arrow{r} & N_{d-2} \arrow{r}{g_{d-1}} & N_{d-1} \arrow{r} & \Si X'_{d-2}
\end{tikzcd}
\]
in $\cc_\gamma$ with all $N_i$ in $\add \Pi$ and all $\cc_\gamma(l_F\Pi,g_i)$ surjective. The following properties are shown for the case of $d=2$ in \cite{KellerWu23} but their proofs for the case $d>2$ are similar: the Higgs category $\ch_\gamma$ is Karoubian, \cf Lemma~3.23 of \cite{KellerWu23}, and an extension closed subcategory of $\cc_\gamma$ hence an extriangulated category, \cf Proposition~3.26 of \cite{KellerWu23}. If the subcategory $\add l_F \Pi \subseteq \add \Pi$ is functorially finite and $H^0(\Pi/(l_F))$ is finite-dimensional, then $\ch_\gamma$ is a Frobenius extriangulated category with the full subcategory $\add l_F\Pi$ of projective-injective objects and contains $\Pi$ as a canonical $d$-cluster-tilting object with endomorphism algebra isomorphic to $H^0(\Pi)$. Its stable category is equivalent to the cluster category associated with the dg quotient $\Pi/(l_F)$ as an extriangulated category. If moreover, the homology of $\Pi$ is concentrated in degree $0$, then $\ch_\gamma$ is an exact category and the bounded derived category $\cd^b(\ch_\gamma)$ is triangle equivalent to the relative cluster category $\cc_\gamma$, \cf Theorem~4.18 of \cite{KellerWu23}.

\section{Calabi--Yau structures on Drinfeld quotients}

\subsection{Homotopy short exact sequences} \label{ss:htpy short exact sequences}

We define a {\em homotopy short exact sequence of complexes} to be a diagram
\[
\begin{tikzcd}
B\arrow[swap]{r}{i}\arrow[bend left]{rr}{h} & A\arrow[swap]{r}{p} & C
\end{tikzcd}
\]
of complexes of abelian groups such that $i$ and $p$ are morphisms of complexes and $h$ is a homogeneous morphism of degree $-1$ satisfying $d(h)=p\circ i$ and that the graded abelian
group $\Si^{-1}C\oplus A\oplus \Si B$ endowed with 
the differential
\[
\begin{bmatrix}
-\si^{-1}d\si & -\si^{-1}p & -\si^{-1}h\si^{-1} \\
0 & d & i\si^{-1} \\
0 & 0 & -\si d\si^{-1}
\end{bmatrix}
\]
is acyclic. Notice that the latter condition is equivalent to the morphism
\[
\begin{bmatrix}
p & h\si^{-1}
\end{bmatrix}
\colon \cone(i)\to C
\]
being a quasi-isomorphism, or the morphism
\[
\begin{bmatrix}
-\si^{-1}h & i
\end{bmatrix}
\colon B\to \cocone(p)
\]
being a quasi-isomorphism. Such a diagram yields a triangle
\[
\begin{tikzcd}
B\arrow{r} & A\arrow{r} & C\arrow{r}{\delta} & \Si B
\end{tikzcd}
\]
in the derived category of abelian groups, where the connecting morphism $\delta$ is represented by the (right) fraction
\[
\begin{tikzcd}[ampersand replacement=\&]
C \& \& \cone(i)\arrow{ll}[swap]{\begin{bsmallmatrix} p & h\sigma^{-1} \end{bsmallmatrix}}\arrow{rr}{\begin{bsmallmatrix} 0 & -\id_{\Si B} \end{bsmallmatrix}} \& \& \Si B \: .
\end{tikzcd}
\]
So we can compute the connecting morphism
\[
\delta \colon H^q(C) \to H^{q+1}(B)
\]
thanks to the following homotopy snake lemma.
\begin{lemma} \label{lemma:homotopy snake lemma}
Suppose that $b$ lies in $B^{q+1}$ and $c$ lies in $C^q$ such that we have $d(b)=0$ and $d(c)=0$. Then we have $\delta(\ol{c})=-\ol{b}$ if and only if there is an element $a$ in $A$ such that we have $d(a)+i(b)=0$ and $\ol{p(a)+h(b)}=\ol{c}$.
\end{lemma}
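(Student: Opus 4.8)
The plan is to read $\delta(\ol c)$ off directly from the right fraction that represents it. Recall that $\cone(i)=A\oplus\Si B$ as a graded abelian group, with differential the lower-right $2\times 2$ block of the displayed acyclic differential; concretely, a degree-$q$ element $(a,\si b')$ of $\cone(i)$, with $a\in A^q$ and $b'\in B^{q+1}$, is a cycle exactly when $d(b')=0$ and $d(a)+i(b')=0$. The numerator $\begin{bmatrix} p & h\si^{-1}\end{bmatrix}$ of the fraction sends such a $(a,\si b')$ to $p(a)+h(b')\in C^q$, while the denominator $\begin{bmatrix} 0 & -\id_{\Si B}\end{bmatrix}$ sends it to $-\si b'$, representing $-\ol{b'}$ under the identification $H^q(\Si B)\cong H^{q+1}(B)$.

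The first step is the key computation. Since $\begin{bmatrix} p & h\si^{-1}\end{bmatrix}\colon\cone(i)\to C$ is a quasi-isomorphism, applying $H^\bullet$ gives an isomorphism; hence for the cocycle $c$ there is a cycle $\zeta=(a_0,\si b_0)$ in $\cone(i)^q$ with $\ol{p(a_0)+h(b_0)}=\ol c$, and chasing this lift through the denominator of the fraction yields $\delta(\ol c)=-\ol{b_0}$, a value that does not depend on the chosen lift. Granting this, the ``if'' direction is immediate: if $a\in A^q$ satisfies $d(a)+i(b)=0$ and $\ol{p(a)+h(b)}=\ol c$, then, as $d(b)=0$, the pair $(a,\si b)$ is a cycle of $\cone(i)^q$ lifting $\ol c$, whence $\delta(\ol c)=-\ol b$.

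For the ``only if'' direction I would start from an arbitrary cycle lift $\zeta_0=(a_0,\si b_0)$ of $\ol c$, so that $\delta(\ol c)=-\ol{b_0}$; if $\delta(\ol c)=-\ol b$ then $b-b_0=d(\beta)$ for some $\beta\in B^q$, and I would set $a:=a_0-i(\beta)$. Using that $i$ is a chain map and $d(a_0)=-i(b_0)$, one finds $d(a)+i(b)=0$; using the Hom-complex relation $d(h)=p\circ i$, that is $d_C\circ h+h\circ d_B=p\circ i$ for the degree-$(-1)$ map $h$, one computes $p(a)+h(b)=p(a_0)+h(b_0)-d_C\!\big(h(\beta)\big)$, hence $\ol{p(a)+h(b)}=\ol c$. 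This $a$ is the one required.

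The argument is essentially a bookkeeping exercise; the only genuine points of care are the sign conventions in the differential of $\cone(i)$ and in the Hom-complex differential of $h$, the identification $H^q(\Si B)\cong H^{q+1}(B)$, and---the one real idea---recognising in the ``only if'' part that one cannot expect the given cocycle $b$ to be the second component of a lift on the nose, but must adjust an arbitrary lift by the coboundary $i(\beta)$ within its cohomology class, the resulting correction term disappearing in cohomology thanks to $d(h)=p\circ i$. I expect this last adjustment, together with keeping all signs straight, to be the main (if modest) obstacle.
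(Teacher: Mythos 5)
Your proof is correct and follows exactly the route the paper sets up: the lemma is stated without proof immediately after the right-fraction description of $\delta$, and the intended argument is precisely your unwinding of that fraction, with the two key points being the identification of cycles of $\cone(i)^q$ as pairs $(a,\si b')$ with $d(b')=0$, $d(a)+i(b')=0$, and the adjustment $a:=a_0-i(\beta)$ in the ``only if'' direction, where the correction term dies in cohomology because $d(h)=p\circ i$. (Only a cosmetic remark: in the right fraction $f\circ s^{-1}$ the quasi-isomorphism $\begin{bmatrix} p & h\si^{-1}\end{bmatrix}$ is conventionally the denominator and $\begin{bmatrix} 0 & -\id_{\Si B}\end{bmatrix}$ the numerator, so your labels are swapped, but this has no bearing on the argument.)
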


\subsection{Homotopy short exact sequences from Drinfeld quotients}

Let $\ca$ be a dg category and $\cb\subseteq \ca$ a full dg subcategory. By definition \cite{Drinfeld04}, the {\em Drinfeld quotient} \linebreak $\ca/\cb=\ca/_{Dr}\,\cb$ is obtained from $\ca$ by formally adjoining a contracting homotopy \linebreak $h_N\colon N \to N$ for each object $N$ in $\cb$, where $h_N$ is of degree $-1$ satisfying $d(h_N)=\id_N$.
As shown in \cite{Drinfeld04}, the sequence of dg categories
\[
\begin{tikzcd}
0 \arrow{r} & \cb \arrow{r} & \ca \arrow{r} & \ca/\cb \arrow{r} & 0
\end{tikzcd}
\]
is {\em exact}, \ie the sequence of triangulated categories
\[
\begin{tikzcd}
0 \arrow{r} &\cd(\cb) \arrow{r} & \cd(\ca) \arrow{r} & \cd(\ca/\cb) \arrow{r} & 0
\end{tikzcd}
\]
is exact.

\begin{proposition} \label{prop:htpy short exact sequence of resolutions}
Let $\ca$ be a small dg category and $\cb\subseteq\ca$ a full dg subcategory. Define $\cc=\ca/\cb$ to
be the Drinfeld quotient of $\ca$ by $\cb$ so that we have an exact sequence of dg categories
\[
\begin{tikzcd}
0 \arrow{r} & \cb \arrow{r}{I} & \ca \arrow{r}{Q} & \cc \arrow{r} & 0\: .
\end{tikzcd}
\]
Then the diagram
\begin{equation} \label{eq:htpy short exact sequence of resolutions}
\begin{tikzcd}
I^{e*}(B(\cb)) \arrow[bend left]{rr}{\tilde{h}} \arrow[swap]{r}{\tilde{i}} & B(\ca) \arrow[swap]{r}{\tilde{p}} & Q^e_*(B(\cc))
\end{tikzcd}
\end{equation}
is a homotopy short exact sequence of dg $\ca$-bimodules, where $\tilde{i}$ and $\tilde{p}$ denote the natural morphisms induced by $I$ respectively $Q$ and $\tilde{h}$ sends
\[
f_n\ten \cdots \ten f_{-1}
\]
in $\ca(B_n,-)\ten_k \cdots \ten_k \ca(?,B_0)$ to
\[
\sum_{i=0}^n (-1)^{i+1+|f_{n-i}|+\cdots+|f_n|}f_n \ten\cdots\ten f_{n-i}\ten h_{B_{n-i}}f_{n-i-1}\ldots h_{B_0}f_{-1}
\]
in $\cc(B_n,Q-)\ten_k \cdots \ten_k \cc(Q\, ?,B_0)$ for all $B_0$, $\ldots$, $B_n$ in $\cb$.
\end{proposition}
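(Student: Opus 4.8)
The plan is to make everything explicit on morphism spaces and then identify the resulting complexes of $\ca$-bimodules with familiar ones. Recall that $\cc=\ca/\cb$ has the same objects as $\ca$ and that $\cc(X,Y)$ is spanned by the alternating words $f_0\,h_{B_1}\,f_1\cdots h_{B_p}\,f_p$ with the $f_j$ in $\ca$ and the $B_i$ in $\cb$, where $h_{B_i}$ has degree $-1$ and $d(h_{B_i})=\id_{B_i}$; accordingly $B(\ca)$, $I^{e*}(B(\cb))$ and $Q^e_*(B(\cc))$ become explicit complexes of dg $\ca$-bimodules with monomial bases. That $\tilde{i}$ and $\tilde{p}$ are morphisms of dg $\ca$-bimodules is then just functoriality of the bar construction together with the induction--restriction adjunction, and $\tilde{h}$ is visibly a morphism of graded $\ca$-bimodules of degree $-1$, since it only rebrackets the right-hand tail of a word and leaves the outermost factors (on which the bimodule structure is read off) untouched. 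The first real task is the identity $d(\tilde{h})=\tilde{p}\circ\tilde{i}$: one expands $d\circ\tilde{h}+\tilde{h}\circ d$ on $f_n\ten\cdots\ten f_{-1}$ and checks that the internal differentials of the $f_j$ cancel their counterparts coming from $d$ of the rebracketed tail, that the bar-merging terms of $\tilde{h}\circ d$ cancel those of $d\circ\tilde{h}$ together with the terms in which some $h_{B_j}$ is contracted to $\id_{B_j}$ in a non-extremal position, and that the only surviving contribution is the one in which every $h_{B_j}$ has been contracted --- which is precisely $Qf_n\ten\cdots\ten Qf_{-1}=\tilde{p}\,\tilde{i}(f_n\ten\cdots\ten f_{-1})$. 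This is a bookkeeping of signs and contains no surprise.

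The heart of the matter is the acyclicity condition, equivalently that the canonical morphism $[\,\tilde{p}\;\;\tilde{h}\si^{-1}\,]\colon\cone(\tilde{i})\to Q^e_*(B(\cc))$ is a quasi-isomorphism. I would argue as follows. On the one hand, $B(\ca)$ is quasi-isomorphic to $\ca$ and, since $B(\cb)$ is a cofibrant resolution of the diagonal $\cb$-bimodule, $I^{e*}(B(\cb))=\ca\ten_\cb B(\cb)\ten_\cb\ca$ is quasi-isomorphic to $\ca\ten^{\mathbf{L}}_\cb\ca$, so that $\cone(\tilde{i})$ is a model for $\cone(\mu\colon\ca\ten^{\mathbf{L}}_\cb\ca\to\ca)$, where $\mu$ denotes ``composition through $\cb$''. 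On the other hand, filter $\cc$, as a dg $\ca$-bimodule, by the number of occurrences of the adjoined homotopies, $F_0\cc=\ca\subseteq F_1\cc\subseteq\cdots$; reading off the differential of $\cc$, the associated subquotient tower $\cdots\to F_2\cc/F_1\cc\to F_1\cc/F_0\cc\to\ca$ is exactly the two-sided bar complex of $\ca$ relative to $\cb$, augmented to $\ca$, and hence $\cc$ --- and therefore $Q^e_*(B(\cc))$, which is quasi-isomorphic to $\cc$ --- is likewise a model for $\cone(\mu\colon\ca\ten^{\mathbf{L}}_\cb\ca\to\ca)$; this is the incarnation at the level of bimodules of the exactness of $0\to\cd(\cb)\to\cd(\ca)\to\cd(\cc)\to 0$ recalled above. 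It then remains to check that $[\,\tilde{p}\;\;\tilde{h}\si^{-1}\,]$ realises this identification: $\tilde{p}$ corresponds to the quotient functor $\ca\to\cc$, and --- the one genuinely new point --- $\tilde{h}$ corresponds, under the identifications above, to the connecting morphism $\cc\to\Si(\ca\ten^{\mathbf{L}}_\cb\ca)$ of the defining triangle. Here the explicit ``push all the $h$'s to the right'' formula defining $\tilde{h}$ is precisely what realises this connecting morphism at the level of the filtration $F_\bullet\cc$, so the verification is a direct comparison on monomials.

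With this in hand, the criterion recalled just after the definition of homotopy short exact sequence --- that $[\,\tilde{p}\;\;\tilde{h}\si^{-1}\,]\colon\cone(\tilde{i})\to Q^e_*(B(\cc))$ be a quasi-isomorphism --- finishes the proof. I expect the main obstacle to be exactly this last comparison, namely matching the combinatorial homotopy $\tilde{h}$ with the connecting morphism of the triangle $\ca\ten^{\mathbf{L}}_\cb\ca\to\ca\to\cc$, together with the (routine but not entirely automatic) checks that the filtration identifications above respect the full $\ca$-bimodule structure and that, over the ground field $k$, the relative two-sided bar complex indeed computes $\ca\ten^{\mathbf{L}}_\cb\ca$; keeping the signs straight in $d(\tilde{h})=\tilde{p}\,\tilde{i}$ is a secondary nuisance.
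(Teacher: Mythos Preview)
Your treatment of the identity $d(\tilde{h})=\tilde{p}\circ\tilde{i}$ is in the same spirit as the paper's: both do the explicit expansion and observe the cancellations, though the paper organises the surviving terms as a two-term telescoping sum rather than as ``every $h_{B_j}$ contracted''.

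For the acyclicity, your route is genuinely different from the paper's. You attack the question globally: identify $\cone(\tilde{i})$ and $Q^e_*(B(\cc))\simeq\cc$ with the same object $\cone(\mu\colon\ca\lten_\cb\ca\to\ca)$ via the filtration of $\cc$ by the number of contracting homotopies (using crucially that $\cb$ is \emph{full}, so the intermediate factors are honest $\cb$-morphisms and the subquotient tower is the bar complex $B(\ca,\cb,\ca)$ augmented to $\ca$), and then argue that the explicit map $[\tilde{p}\;\;\tilde{h}\si^{-1}]$ realises this identification. This last step is the one you flag as the main obstacle, and rightly so: knowing that source and target are abstractly quasi-isomorphic does not yet show that \emph{this particular} map is a quasi-isomorphism, and the ``direct comparison on monomials'' you promise would still have to be carried out. (A minor slip: $\tilde{h}\si^{-1}$ is a map $\Si(\ca\lten_\cb\ca)\to\cc$, so it corresponds to the inclusion of the second summand of $\cone(\mu)$, not to the connecting morphism $\cc\to\Si(\ca\lten_\cb\ca)$ as you write.)

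The paper sidesteps this matching problem entirely. Instead of comparing the two sides as bimodules, it applies $L\lten_\ca(-)$ and uses the semi-orthogonal decomposition of $\cd(\ca)$ coming from the exact sequence to reduce to two generating cases. For $L=\ca(?,B)$ with $B\in\cb$, both $L\lten_\ca\cone(\tilde{i})$ and $L\lten_\ca Q^e_*(B(\cc))$ vanish (the first because $\tilde{i}$ becomes an isomorphism after tensoring, the second because $B$ is contractible in $\cc$), so any map between them is an isomorphism. For $L=\cc(Q?,C)$, the term $L\lten_\ca I^{e*}(B(\cb))$ vanishes, so $\phi_L$ reduces to $L\lten_\ca\tilde{p}$, which is visibly an isomorphism. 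The upshot is that the paper never has to know what $\tilde{h}$ does beyond the identity $d(\tilde{h})=\tilde{p}\circ\tilde{i}$: in one case both sides are zero, in the other $\tilde{h}$ drops out. Your filtration argument is conceptually appealing and explains \emph{why} the statement should hold, but the paper's test-on-generators argument is shorter and avoids the bookkeeping you correctly anticipate.
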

\begin{proof}
For simplicity, we write $h$ for the contracting homotopy associated with any object. For any $f_n\ten \cdots \ten f_{-1}$ in $\ca(B_n,-)\ten_k \cdots \ten_k \ca(?,B_0)$, we have
\begin{align*}
 & d(\tilde{h}(f_n\ten\cdots\ten f_{-1})) \\
= & d(\sum_{i=0}^n (-1)^{i+1+|f_{n-i}|+\cdots+|f_n|}f_n\ten\cdots\ten f_{n-i}\ten hf_{n-i-1}\ldots hf_{-1}) \\
= & \sum_{0\leq j<i\leq n}(-1)^{i+j+1+|f_{n-i}|+\cdots+|f_{n-j-1}|}f_n\ten\cdots\ten f_{n-j}f_{n-j-1}\ten\cdots\ten f_{n-i}\ten hf_{n-i-1}\ldots hf_{-1} \\
 & -\sum_{i=1}^n f_n\ten\cdots\ten f_{n-i+1}\ten f_{n-i}hf_{n-i-1}\ldots hf_{-1} \\
 & +\sum_{0\leq j\leq i\leq n}(-1)^{i+j+1+|f_{n-i}|+\cdots+|f_{n-j}|}f_n\ten\cdots\ten d(f_{n-j})\ten\cdots\ten f_{n-i}\ten hf_{n-i-1}\ldots hf_{-1} \\
 & +\sum_{0\leq i\leq j\leq n}(-1)^{i+j+|f_{n-j}|+\cdots+|f_{n-i-1}|}f_n\ten\cdots\ten f_{n-i}\ten hf_{n-i-1}\ldots hf_{n-j}f_{n-j-1}\ldots hf_{-1} \\
 & +\sum_{0\leq i<j\leq n+1}(-1)^{i+j+|f_{n-j+1}|+\cdots+|f_{n-i-1}|}f_n\ten\cdots\ten f_{n-i}\ten hf_{n-i-1}\ldots hd(f_{n-j})\ldots hf_{-1}
\end{align*}
and
\begin{align*}
 & \tilde{h}(d(f_n\ten\cdots\ten f_{-1})) \\
= & \tilde{h}(\sum_{j=0}^n (-1)^{j+|f_{n-j}|+\cdots+|f_n|}f_n\ten\cdots\ten f_{n-j}f_{n-j-1}\ten\cdots\ten f_{-1} \\
 & +\sum_{j=0}^{n+1}(-1)^{j+|f_{n-j+1}|+\cdots+|f_n|}f_n\ten\cdots\ten d(f_{n-j})\ten\cdots\ten f_{-1}) \\
= & \sum_{0\leq i<j\leq n}(-1)^{i+j+1+|f_{n-j}|+\cdots+|f_{n-i-1}|}f_n\ten\cdots\ten f_{n-i}\ten hf_{n-i-1}\ldots hf_{n-j}f_{n-j-1}\ldots hf_{-1} \\
 & +\sum_{0\leq j<i\leq n}(-1)^{i+j+|f_{n-i}|+\cdots+|f_{n-j-1}|}f_n\ten\cdots\ten f_{n-j}f_{n-j-1}\ten\cdots\ten f_{n-i}\ten hf_{n-i-1}\ldots hf_{-1} \\
 & +\sum_{0\leq i<j\leq n+1}(-1)^{i+j+1+|f_{n-j+1}|+\cdots+|f_{n-i-1}|}f_n\ten\cdots\ten f_{n-i}\ten hf_{n-i-1}\ldots hd(f_{n-j})\ldots hf_{-1} \\
 & +\sum_{0\leq j\leq i\leq n}(-1)^{i+j+|f_{n-i}|+\cdots+|f_{n-j}|}f_n\ten\cdots\ten d(f_{n-j})\ten\cdots\ten f_{n-i}\ten hf_{n-i-1}\ldots hf_{-1}\: .
\end{align*}
We deduce that
\begin{align*}
d(\tilde{h})(f_n\ten \cdots \ten f_{-1})= & (d\circ \tilde{h}+\tilde{h}\circ d)(f_n\ten \cdots \ten f_{-1}) \\
= & -\sum_{i=1}^n f_n\ten\cdots\ten f_{n-i+1}\ten f_{n-i}hf_{n-i-1}\ldots hf_{-1} \\
 & +\sum_{i=0}^n f_n\ten\cdots\ten f_{n-i}\ten f_{n-i-1}hf_{n-i-2}\ldots hf_{-1} \\
= & f_n\ten\cdots\ten f_{-1} \: .
\end{align*}
This means that we have $d(\tilde{h})=\tilde{p}\circ \tilde{i}$.
To prove that the morphism
\[
\begin{bmatrix}
\tilde{p} & \tilde{h}\si^{-1}
\end{bmatrix}
\colon \cone(\tilde{i})\to Q^e_*(B(\cc))
\]
in $\cd(\ca^e)$ is an isomorphism, it suffices to prove that the morphism
\[
\phi_L=L\lten_{\ca}
\begin{bmatrix}
\tilde{p} & \tilde{h}\si^{-1}
\end{bmatrix}\colon L\lten_\ca \cone(\tilde{i})\longrightarrow L\lten_\ca Q^e_*(B(\cc))
\]
is an isomorphism for all $L$ in $\cd(\ca)$. Since the exact sequence of dg categories
\[
\begin{tikzcd}
0 \arrow{r} & \cb \arrow{r}{I} & \ca \arrow{r}{Q} & \cc \arrow{r} & 0
\end{tikzcd}
\]
yields a semi-orthogonal decomposition of the derived category $\cd(\ca)$, there is a triangle
\[
\begin{tikzcd}
(\mathbf{L}I^*\circ \mathbf{R}I_*)L \arrow{r} & L \arrow{r} & (\mathbf{R}Q_*\circ \mathbf{L}Q^*)L \arrow{r} & \Si(\mathbf{L}I^*\circ \mathbf{R}I_*)L
\end{tikzcd}
\]
for all $L$ in $\cd(\ca)$. So it suffices to prove that $\phi_L$ is an isomorphism for all $L$ which lie in the image of the functor $\mathbf{L}I^*$ or that of the functor $\mathbf{R}Q_*$. Moreover, since the derived categories $\cd(\cb)$ and $\cd(\cc)$ equal the localising subcategories generated by the dg $\cb$-modules $\cb(?,B)$, $B\in \cb$, respectively dg $\cc$-modules $\cc(?,C)$, $C\in \cc$, and the functors $\mathbf{L}I^*$, $\mathbf{R}Q_*$, $?\, \lten_\ca \cone(\tilde{i})$, $?\, \lten_\ca Q^e_*(B(\cc))$ are triangle functors which commute with arbitrary coproducts,
it suffices to prove that $\phi_L$ is an isomorphism for $L=\mathbf{L}I^*(\cb(?,B))=\ca(?,B)$ and for $L=\mathbf{R}Q_*(\cc(?,C))=\cc(Q\, ?,C)$, where $B\in\cb$ and $C\in\cc$. In the first case, the morphism $L\lten_\ca \, \tilde{i}$ is 
the composed isomorphism
\[
\begin{tikzcd}
\ca(?,B)\ten_\ca I^{e*}(B(\cb)) \arrow{r}{\sim} & \ca(?,B) & \ca(?,B)\ten_\ca B(\ca)\arrow[swap]{l}{\sim}
\end{tikzcd}
\]
in $\cd(\ca)$. Thus, its cone $L\lten_\ca \cone(\tilde{i})$ vanishes. On the other hand, the dg $\ca$-module
\[
\begin{tikzcd}
L\lten_\ca Q^e_*(B(\cc))=\ca(?,B)\ten_\ca Q^e_*(B(\cc)) \arrow{r}{\sim} & \cc(Q\, ?,B)
\end{tikzcd}
\]
in $\cd(\ca)$ vanishes because $B$ is a contractible object in $\cc$. 
Therefore, the morphism $\phi_L$ is an isomorphism in this case. In the second case, the dg $\ca$-module
\[
L\lten_\ca I^{e*}(B(\cb))=\cc(Q\, ?,C)\ten_\ca I^{e*}(B(\cb))\iso \cc((Q\circ I)\, ?,C)\ten_\cb \ca=(\mathbf{L}I^*\circ \mathbf{R}I_*\circ \mathbf{R}Q_*)\, \cc(?,C)
\]
in $\cd(\ca)$ vanishes. This means that the canonical morphism $L\lten_\ca B(\ca)\to L\lten_\ca \cone(\tilde{i})$ is an isomorphism. On the other hand, the composition $L\lten_\ca \, \tilde{p}$ of $\phi_L$ with the canonical morphism $L\lten_\ca B(\ca)\to L\lten_\ca \cone(\tilde{i})$ is the composed isomorphism
\[
\begin{tikzcd}
\cc(Q\, ?,C)\ten_\ca B(\ca) \arrow{r}{\sim} & \cc(Q\, ?,C) & \cc(Q\, ?,C)\ten_\ca Q^e_*(B(\cc))\arrow[swap]{l}{\sim}
\end{tikzcd}
\]
in $\cd(\ca)$. Therefore, the morphism $\phi_L$ is also an isomorphism in this case. This implies the assertion.
\end{proof}

\begin{proposition} \label{prop:htpy short exact sequence of Hochschild complexes}
Let $\ca$, $\cb$, $\cc$, $I$, $Q$ be as in Proposition~\ref{prop:htpy short exact sequence of resolutions}.
Let $\nu$ be a dg endofunctor of $\ca$ which takes $\cb$ to itself.
Let $M$ be the $\ca$-bimodule determined by $M(X,Y)=\ca(\nu X,Y)$.
Denote $I^e_* M$ by $M_\cb$ and $Q^{e*} M$ by $M^\cc$. Then the diagram
\begin{equation} \label{eq:htpy short exact sequence of Hochschild complexes}
\begin{tikzcd}
M_\cb \ten_{\cb^e}B(\cb) \arrow[bend left]{rr}{h}\arrow[swap]{r}{i} & M\ten_{\ca^e}B(\ca) \arrow[swap]{r}{p} & M^\cc \ten_{\cc^e}B(\cc)
\end{tikzcd}
\end{equation}
is a homotopy short exact sequence of complexes, where $i$ and $p$ denote the natural morphisms induced by $I$ respectively $Q$ and $h$ sends
\[
f_n\ten \cdots \ten f_0
\]
in $\cb(\nu B_n,B_0)\ten_k \cb(B_{n-1},B_n)\ten_k \cdots \ten_k \cb(B_0,B_1)$ to
\[
\sum_{i=0}^n (-1)^{(i+1+|f_{n-i}|+\cdots+|f_n|)(n-i+|f_0|+\cdots+|f_{n-i-1}|)}h_{B_{n-i}}f_{n-i-1}\ldots h_{B_1}f_0 h_{B_0}f_n\ten f_{n-1}\ten\cdots\ten f_{n-i}
\]
in $\cc(\nu B_n,B_0)\ten_k \cc(B_{n-1},B_n)\ten_k \cdots \ten_k \cc(B_0,B_1)$ for all $B_0$, $\ldots$, $B_n$ in $\cb$.
\end{proposition}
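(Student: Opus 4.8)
The strategy is to obtain this proposition by applying the functor $M\ten_{\ca^e}(-)$, which computes Hochschild chains, to the homotopy short exact sequence~(\ref{eq:htpy short exact sequence of resolutions}) of dg $\ca$-bimodules provided by Proposition~\ref{prop:htpy short exact sequence of resolutions}. Since $\nu$ takes $\cb$ to itself, it induces a dg endofunctor $\ol{\nu}$ of $\cc=\ca/\cb$; and since $\cb\subseteq\ca$ is full, the bimodule $M=\ca(\nu-,-)$ restricts to $M_\cb=I^e_*M=\cb(\nu-,-)$ and induces to $M^\cc=Q^{e*}M=\cc(\ol{\nu}-,-)$. First I would record the base-change isomorphisms of complexes
\[
M\ten_{\ca^e}I^{e*}(B(\cb))\;\iso\;M_\cb\ten_{\cb^e}B(\cb)\ko
M\ten_{\ca^e}Q^e_*(B(\cc))\;\iso\;M^\cc\ten_{\cc^e}B(\cc)\: ,
\]
the first coming from $I^{e*}(B(\cb))=\ca^e\ten_{\cb^e}B(\cb)$ together with $M\ten_{\ca^e}\ca^e\cong M_\cb$ as right $\cb^e$-modules, the second from $M^\cc\ten_{\cc^e}B(\cc)=(M\ten_{\ca^e}\cc^e)\ten_{\cc^e}B(\cc)\cong M\ten_{\ca^e}Q^e_*(B(\cc))$; the identification $M^\cc=Q^{e*}M$ at the chain level here uses $\nu(\cb)\subseteq\cb$, so that the homotopies adjoined in forming $\cc$ can be absorbed into the bar factors. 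The middle term of~(\ref{eq:htpy short exact sequence of Hochschild complexes}) is $M\ten_{\ca^e}B(\ca)$ on the nose, and under the above isomorphisms the morphisms $M\ten_{\ca^e}\tilde{i}$ and $M\ten_{\ca^e}\tilde{p}$ become the natural maps $i$ and $p$ of the statement.

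Next I would identify the degree $-1$ homotopy $M\ten_{\ca^e}\tilde{h}$ with the explicit $h$ of the statement; this is the one genuinely computational point. In passing from $B(\cb)$ to $M_\cb\ten_{\cb^e}B(\cb)$ one cyclically rotates the tensor factors so that the two bimodule legs of $B(\cb)$ merge into the $M$-leg, and the contracting homotopies that $\tilde{h}$ inserts then read off as the composite $h_{B_{n-i}}f_{n-i-1}\cdots h_{B_1}f_0h_{B_0}f_n$ in $\cc(\ol{\nu}B_n,B_0)$; keeping track of the Koszul signs of this rotation, together with the sign conventions for the Hochschild differential fixed in Section~\ref{ss:reminders on Calabi--Yau structures}, produces exactly the exponent $(i+1+|f_{n-i}|+\cdots+|f_n|)(n-i+|f_0|+\cdots+|f_{n-i-1}|)$ appearing in the statement. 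Once this is done, the identity $d(\tilde{h})=\tilde{p}\circ\tilde{i}$ from Proposition~\ref{prop:htpy short exact sequence of resolutions} immediately gives $d(h)=p\circ i$, because $M\ten_{\ca^e}(-)$ is a dg functor (alternatively, $d(h)=p\circ i$ can be verified directly by the same sort of computation as in that proposition).

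Finally I would check the acyclicity condition. By the characterisation recalled in Section~\ref{ss:htpy short exact sequences}, it suffices to show that $\begin{bmatrix} p & h\si^{-1}\end{bmatrix}\colon\cone(i)\to M^\cc\ten_{\cc^e}B(\cc)$ is a quasi-isomorphism; under the identifications above this is $M\ten_{\ca^e}(-)$ applied to the quasi-isomorphism $\begin{bmatrix}\tilde{p} & \tilde{h}\si^{-1}\end{bmatrix}\colon\cone(\tilde{i})\to Q^e_*(B(\cc))$ of dg $\ca$-bimodules established in Proposition~\ref{prop:htpy short exact sequence of resolutions}. Its source $\cone(\tilde{i})$ is semi-free over $\ca^e$, being built from the bar resolutions $I^{e*}(B(\cb))$ and $B(\ca)$; and its target $Q^e_*(B(\cc))$ is $K$-flat over $\ca^e$ as well, because $B(\cc)$ is a complex of direct sums of representable $\cc$-bimodules $\cc(C,-)\ten_k\cc(-,C')$ and Drinfeld's explicit description of $\cc=\ca/\cb$ exhibits each one-sided module $\cc(C,Q-)$ and $\cc(Q-,C')$ as a direct sum of representable $\ca$-modules. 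Therefore $M\ten_{\ca^e}(-)$ carries the quasi-isomorphism to a quasi-isomorphism, which completes the proof. I expect the main obstacles to be pinning down the rotation sign in $M\ten_{\ca^e}\tilde{h}=h$ and carefully justifying the second base-change isomorphism together with the $K$-flatness of $Q^e_*(B(\cc))$.
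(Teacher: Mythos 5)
Your proposal is correct and follows essentially the same route as the paper: apply $M\ten_{\ca^e}(-)$ to the homotopy short exact sequence of Proposition~\ref{prop:htpy short exact sequence of resolutions} and conclude via the two base-change isomorphisms. The only (immaterial) difference is in how preservation of the exactness is justified: the paper observes that all three terms of the bimodule sequence are cofibrant over $\ca^e$, so its acyclic total complex is contractible and remains acyclic under any dg functor, whereas you argue that the quasi-isomorphism $\begin{bmatrix}\tilde{p} & \tilde{h}\si^{-1}\end{bmatrix}$ is one between cofibrant (hence K-flat) objects and is therefore preserved by the tensor functor -- two phrasings of the same cofibrancy input.
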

\begin{proof}
Since $\cc$ is a Drinfeld quotient of $\ca$, the dg $\ca$-module $\cc(Q\, ?,C)$ and dg $\ca\op$-module $\cc(C,Q\, ?)$ are cofibrant for all $C$ in $\cc$. Therefore, the dg 
$\ca^e$-module $Q^e_*(B(\cc))$ is cofibrant.
So the terms of the sequence~(\ref{eq:htpy short exact sequence of resolutions})
are cofibrant over $\ca^e$ and its sum total complex is cofibrant and acyclic hence contractible. Therefore,
the image of the sequence~(\ref{eq:htpy short exact sequence of resolutions}) under the dg functor $M\ten_{\ca^e}\, ?$ is still homotopy short exact. Now the assertion follows from the isomorphisms
\[
\begin{tikzcd}
M\ten_{\ca^e}I^{e*}(B(\cb)) \arrow[no head]{r}{\sim} & M\ten_{\ca^e}\ca^e\ten_{\cb^e}B(\cb) \arrow{r}{\sim} & M_\cb\ten_{\cb^e}B(\cb)
\end{tikzcd}
\]
and
\[
\begin{tikzcd}
M\ten_{\ca^e}Q^e_*(B(\cc)) & M\ten_{\ca^e}\cc^e \ten_{\cc^e}B(\cc) \arrow[swap]{l}{\sim}\arrow[no head]{r}{\sim} & M^\cc \ten_{\cc^e}B(\cc)
\end{tikzcd}
\]
of complexes.
\end{proof}

\subsection{Amiot's construction and connecting morphisms}

If the dg category $\ca$ and its full dg subcategory $\cb\subseteq \ca$ are pretriangulated, then so is $\ca/\cb$ and we have an induced exact sequence of triangulated categories
\[
\begin{tikzcd}
0 \arrow{r} & H^0(\cb)\arrow{r} & H^0(\ca) \arrow{r} & H^0(\ca/\cb) \arrow{r} & 0 \: .
\end{tikzcd}
\]
Any morphism from $X$ to $Y$ in the Verdier quotient $H^0(\ca)/H^0(\cb)$ is represented by a fraction $f\circ s^{-1}$, where $s\colon X'\to X$ and $f\colon X'\to Y$ are morphisms in $H^0(\ca)$ such that $s$ fits into a triangle
\[
\begin{tikzcd}
\Si^{-1}N\arrow{r}{\pi} & X'\arrow{r}{s} & X\arrow{r}{\iota} & N
\end{tikzcd}
\]
with $N$ lying in $H^0(\cb)$. Thanks to the following lemma, the image of $f\circ s^{-1}$ under the canonical map
\[
\begin{tikzcd}
H^0(\ca)/H^0(\cb)(X,Y)\arrow{r}{\sim} & H^0(\ca/\cb)(X,Y)\arrow[no head]{r}{\sim} & H^{-1}(\ca/\cb)(X,\Si Y)
\end{tikzcd}
\]
is $\Si f\circ \Si \pi \circ h_N \circ \iota$. 

\begin{lemma}[Kontsevich, \cf Drinfeld \cite{Drinfeld04}] \label{lem:contracting homotopy}
Let $\ca$ be a pretriangulated dg category. 
Suppose that $f\colon X\to Y$ is a closed morphism of degree $0$ in $\ca$
and $\cone(f)$ is its cone with underlying graded object $Y\oplus \Si X$.
Let
\[
h=
\begin{bmatrix}
h_{11} & h_{12}\si^{-1} \\
\si h_{21} & -\si h_{22}\si^{-1}
\end{bmatrix}
\colon \cone(f)\to \cone(f)
\]
be an endomorphism of $\cone(f)$ in $\ca$ which is of degree $-1$.
Then $h$ is a contracting homotopy for $\cone(f)$ if and only if we have
\[
d\circ h_{21}=h_{21}\circ d\ko \id_X-h_{21}\circ f=d(h_{22})\ko \id_Y-f\circ h_{21}=d(h_{11})\ko f\circ h_{22}-h_{11}\circ f=d(h_{12})\: .
\]
\end{lemma}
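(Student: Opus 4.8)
The plan is to prove the statement by a direct $2\times 2$ matrix computation, using only the explicit form of the cone in a pretriangulated dg category. First I would record that, with respect to the given decomposition $Y\oplus\Si X$ of its underlying graded object, the differential of $\cone(f)$ is the block matrix
\[
d_{\cone(f)}=\begin{bmatrix} d & f\si^{-1} \\ 0 & -\si d\si^{-1}\end{bmatrix}\: ,
\]
and that $\id_{\cone(f)}=\begin{bmatrix}\id_Y & 0 \\ 0 & \si\id_X\si^{-1}\end{bmatrix}$. Since $h$ has degree $-1$, it is a contracting homotopy for $\cone(f)$ if and only if $d(h)=d_{\cone(f)}\circ h+h\circ d_{\cone(f)}$ equals $\id_{\cone(f)}$.

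The main step is then to expand $d_{\cone(f)}\circ h+h\circ d_{\cone(f)}$ from the given block form of $h$, cancel the inner factors $\si^{-1}\circ\si$ and $\si\circ\si^{-1}$, and compare the result with $\id_{\cone(f)}$ block by block. Keeping track of the degrees of the blocks ($|h_{11}|=|h_{22}|=-1$, $|h_{21}|=0$, $|h_{12}|=-2$), the graded Leibniz rule lets one recognise the $(1,1)$-block of $d(h)$ as $d(h_{11})+f\circ h_{21}$, the $(2,2)$-block (after conjugating back by $\si$ and $\si^{-1}$) as $d(h_{22})+h_{21}\circ f$, the $(2,1)$-block as $\pm\,\si(d\circ h_{21}-h_{21}\circ d)$, and the $(1,2)$-block as $\bigl(d(h_{12})+h_{11}\circ f-f\circ h_{22}\bigr)\si^{-1}$. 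Equating each of these to the corresponding block of $\id_{\cone(f)}$ gives exactly the four displayed identities; since every step is reversible, these identities are also sufficient for $h$ to be a contracting homotopy.

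The only point demanding care is the consistent handling of signs: the minus sign in the lower-right block $-\si d\si^{-1}$ of $d_{\cone(f)}$, the sign in the graded Leibniz rule (which varies with the unequal degrees of the four blocks $h_{ij}$), and the conjugations by $\si$, $\si^{-1}$ needed to translate identities between morphisms involving $\Si X$ into identities between morphisms among $X$ and $Y$. With the conventions of the paper fixed ($\si$ of degree $-1$, cohomological grading, differentials of degree $+1$), this is routine bookkeeping with no conceptual difficulty; it is essentially the same calculation that shows $\cone(f)$ is contractible whenever $f$ is a homotopy equivalence.
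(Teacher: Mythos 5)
Your proposal is correct, and the direct block-matrix computation you describe is exactly the intended argument: with $d_{\cone(f)}=\begin{bmatrix} d & f\si^{-1} \\ 0 & -\si d\si^{-1}\end{bmatrix}$ one finds that the $(1,1)$, $(2,2)$, $(2,1)$ and $(1,2)$ blocks of $d_{\cone(f)}\circ h+h\circ d_{\cone(f)}$ are $d(h_{11})+f\circ h_{21}$, $\si(d(h_{22})+h_{21}\circ f)\si^{-1}$, $-\si(d\circ h_{21}-h_{21}\circ d)$ and $(d(h_{12})+h_{11}\circ f-f\circ h_{22})\si^{-1}$, and equating with $\id_{\cone(f)}$ yields precisely the four stated identities. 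The paper itself gives no proof (the lemma is cited from Kontsevich via Drinfeld), so there is nothing to contrast with; your sign bookkeeping is consistent with the paper's conventions as used in section~4.1.
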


\begin{remark} Thus, with the notations of the lemma, the morphism $f$ becomes invertible in $H^0(\ca)$ if and only if there is a morphism $h$ as in the lemma and in this case, the morphism $h_{21}$ is an inverse up to homotopy of $f$.
\end{remark}

Let us briefly recall Amiot's construction from section~1.1 of \cite{Amiot09} using the notations above summed up in the following diagram (notice that our $\nu$ corresponds to Amiot's $\nu^{-1}$)
\[
\begin{tikzcd}
\nu N \arrow{rr}{\nu\Si\pi} 	          & & \nu\Si X'\arrow{d}{\nu\Si f} & & & & \\
X'\arrow{rr}{s}\arrow[swap]{d}{f} & & \nu X\arrow{rr}{\iota}\arrow[bend left,dashed]{ll}{\si^{-1}\circ \Si\pi \circ h_N \circ \iota} 
& & N\arrow{rr}{\Si\pi}\arrow[out=225,in=315,loop,swap]{}{h_N}  & & \Si X'\\
\Si^{-1}X\mathrlap{\: .} & & & & & &
\end{tikzcd}
\]
In Amiot's setup, we are given linear forms $\tr_L \colon H^0(\cb)(\nu L,L)\to k$, $L \in H^0(\cb)$.
We then define a linear form $\tr_X \colon H^0(\ca)/H^0(\cb)(\nu X, \Si^{-1} X) \to k$ for each object $X$ in the quotient category $H^0(\ca)/H^0(\cb)$ by sending a fraction $f \circ s^{-1}$ as above to the scalar
\[
\tr_N(\iota \circ \nu\Si f \circ \nu\Si \pi)\: .
\]
In the following theorem, we lift this construction to the dg level
and link it to the connecting morphism in Hochschild homology.

\begin{theorem} \label{thm:connecting morphism}
We keep the notations in Proposition~\ref{prop:htpy short exact sequence of Hochschild complexes}. Assume that the dg categories $\ca$ and $\cb$ are pretriangulated. Then the square
\[
\begin{tikzcd}
H^{-1}(\coprod_{X\in \cc} M^\cc(X,X))\arrow{r}\arrow{d} & H^0(\coprod_{N\in \cb} M_\cb(N,N))\arrow{d} \\
HH_1(\cc,M^\cc)\arrow[swap]{r}{\delta} & HH_0(\cb,M_\cb)
\end{tikzcd}
\]
is commutative, where the maps are described as follows: the vertical ones are the canonical maps and the top horizontal one sends
$\Si f\circ \Si \pi \circ h_{N}\circ \iota$ to $\iota \circ \nu\Si f\circ \nu\Si \pi$,
where $s\colon X'\to \nu X$ and $f\colon X'\to \Si^{-1}X$ are morphisms in $H^0(\ca)$ such that 
$s$ fits into a triangle
\[
\begin{tikzcd}
\Si^{-1}N \arrow{r}{\pi} & X'\arrow{r}{s} & \nu X\arrow{r}{\iota} & N
\end{tikzcd}
\]
with $N$ lying in $H^0(\cb)$.
\end{theorem}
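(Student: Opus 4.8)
The plan is to compute the connecting morphism $\delta$ directly at the chain level, by applying the homotopy snake lemma (Lemma~\ref{lemma:homotopy snake lemma}) to the homotopy short exact sequence of Hochschild complexes~(\ref{eq:htpy short exact sequence of Hochschild complexes}). By linearity it suffices to treat one object $X\in\cc$ and one class in $H^{-1}(M^\cc(X,X))=H^0(\ca)/H^0(\cb)(\nu X,\Si^{-1}X)$ represented by a fraction $f\circ s^{-1}$, where $s\colon X'\to\nu X$ and $f\colon X'\to\Si^{-1}X$ are closed of degree $0$ in $\ca$ and $s$ fits into a triangle $\Si^{-1}N\xrightarrow{\pi}X'\xrightarrow{s}\nu X\xrightarrow{\iota}N$ with $N\in H^0(\cb)$. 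I would first arrange this triangle to be a strict one in the pretriangulated dg category $\ca$, taking $N=\cone(s)$ with $\iota$, $\Si\pi$ its structure morphisms, so that $\Si\pi\circ\iota=0$ already on the nose. Then $\tilde g:=\Si f\circ\Si\pi\circ h_N\circ\iota$ is a genuine cycle of degree $-1$, viewed in bar-degree $0$ of $HH(\cc,M^\cc)$, and by Lemma~\ref{lem:contracting homotopy} it represents the image of $f\circ s^{-1}$ under the canonical map, \ie the image of our class under the left vertical map of the square; likewise $\beta:=\iota\circ\nu\Si f\circ\nu\Si\pi$ is a cycle in bar-degree $0$ of $HH(\cb,M_\cb)$ (all three factors are closed), and the right vertical map sends the element $\iota\circ\nu\Si f\circ\nu\Si\pi$ in the statement to $\ol\beta$. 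So the assertion to be proved is exactly $\delta(\ol{\tilde g})=\ol\beta$.

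By the homotopy snake lemma, applied with the cycle $-\beta$, it then suffices to produce $a\in HH(\ca,M)^{-1}$ with $d(a)=i(\beta)$ and $\ol{p(a)-h(\beta)}=\ol{\tilde g}$; this yields $\delta(\ol{\tilde g})=-\ol{-\beta}=\ol\beta$. I would take for $a$, up to sign, the bar-degree-$1$ Hochschild chain $\iota\ten_\ca(\Si f\circ\Si\pi)$, with $\iota\in M(X,N)=\ca(\nu X,N)$ and $\Si f\circ\Si\pi\in\ca(N,X)$. Its internal differential vanishes because $\iota$ and $\Si f\circ\Si\pi$ are closed, and in the bar differential the a priori second term $(\Si f\circ\Si\pi)\circ\iota=\Si f\circ(\Si\pi\circ\iota)$ vanishes precisely because $N$ is a strict cone; hence $d(a)=\pm\,\iota\circ\nu(\Si f\circ\Si\pi)=\pm\,\iota\circ\nu\Si f\circ\nu\Si\pi$, which after fixing the sign of $a$ equals $i(\beta)$. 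Since $h$ in bar-degree $0$ simply post-composes the adjoined contracting homotopy (formula of Proposition~\ref{prop:htpy short exact sequence of Hochschild complexes}), we have $h(\beta)=h_N\circ\beta$, and $p(a)-h(\beta)$ is the degree-$(-1)$ cycle $\pm\,\iota\ten(\Si f\circ\Si\pi)-h_N\circ\iota\circ\nu\Si f\circ\nu\Si\pi$ of $HH(\cc,M^\cc)$.

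It remains to see that this cycle is homologous to $\tilde g$. Here I would work in the normalised Hochschild complex and use auxiliary chains of $HH(\cc,M^\cc)$ built from $\iota$, $\Si f$, $\Si\pi$ and $h_N$ — for instance $\iota\ten(\Si f\circ\Si\pi\circ h_N)$ and $\iota\ten(\Si f\circ\Si\pi)\ten h_N$ — and compute their differentials using repeatedly that $d(h_N)=\id_N$ and that a dg endofunctor carries a contracting homotopy of an object to one of its image. The effect is to "unzip" the single occurrence of $h_N$ in $\tilde g=\Si f\circ\Si\pi\circ h_N\circ\iota$ and to transport it, at the cost of boundaries, around the cyclic word until it becomes the post-composition by $h_N$ appearing in $h(\beta)$; the leftover bar-degree-$1$ term produced along the way is exactly $p(a)$. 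Assembling these boundaries identifies $p(a)-h(\beta)-\tilde g$ as a boundary, whence $\delta(\ol{\tilde g})=\ol\beta$ and the square commutes.

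I expect the principal difficulty to be the sign bookkeeping throughout the last two paragraphs: Koszul signs must be carried through the bar differential (whose internal part already twists by the bar position), through the sign-laden formula for $h$, through Drinfeld-quotient composition and through $\nu$, and then reconciled with the sign occurring in the homotopy snake lemma and with the conventions implicit in the top horizontal map. A related subtlety is that $\nu$ applied to the \emph{adjoined} homotopy $h_N$ is only \emph{some} contracting homotopy of $\nu N$, not the adjoined one $h_{\nu N}$; the two differ by a null-homotopic cycle, so this does not affect the relevant homology class, but it must be checked. Finally, the strict-cone choice is what makes $a$ a single term with no higher corrections; for a non-strict representative of the triangle one additionally carries a bar-degree-$0$ correction term $\pm\,\Si f\circ k$ in $a$, coming from a null-homotopy $k$ of $\Si\pi\circ\iota$ in $\ca$ (together with the matching adjustment to $\tilde g$), after which the argument proceeds in the same way.
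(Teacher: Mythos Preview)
Your approach is essentially the paper's: apply the homotopy snake lemma to the sequence~(\ref{eq:htpy short exact sequence of Hochschild complexes}) with the same choices $a=\iota\ten(\Si f\circ\Si\pi)$, $b=-\beta=-\iota\circ\nu\Si f\circ\nu\Si\pi$ and $c=\tilde g=\Si f\circ\Si\pi\circ h_N\circ\iota$. The only difference is in the last step, where you propose a multi-step homotopy involving auxiliary chains such as $\iota\ten(\Si f\circ\Si\pi\circ h_N)$ or $\iota\ten(\Si f\circ\Si\pi)\ten h_N$, and consequently worry about $\nu h_N$ versus $h_{\nu N}$. The paper shortcuts this entirely: it places $h_N$ in the \emph{bimodule} slot rather than an $\ca$-slot, taking the single chain $(h_N\circ\iota)\ten(\Si f\circ\Si\pi)$ in $HH(\cc,M^\cc)$, and checks directly that $h(b)+p(a)-c=d\bigl((h_N\circ\iota)\ten(\Si f\circ\Si\pi)\bigr)$. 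Since the right $\ca$-action on $M^\cc(X,N)=\cc(\nu X,N)$ is precomposition with $\nu(-)$ but the internal differential just hits $h_N$ with $d(h_N)=\id_N$, no $\nu h_N$ ever appears and your anticipated subtlety evaporates. Your strict-cone reduction (to get $\Si\pi\circ\iota=0$ on the nose, hence $d(a)+i(b)=0$) is implicitly used in the paper as well.
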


\begin{remark} \label{rk:compatibility} Thus, we see that if a class $x$ in $DHH_0(\cb, M_\cb)$ gives rise to linear forms
$\tr_N \colon H^0(\cb)(\nu N, N) \to k$, $N\in H^0(\cb)$, then its image $(D\delta)(x)$ gives rise to the linear forms
$\tr_X \colon H^0(\ca)/H^0(\cb)(\nu X, \Si^{-1}X) \to k$, $X \in H^0(\ca)/H^0(\cb)$, obtained from the \linebreak $\tr_N$, $N\in H^0(\cb)$, by Amiot's construction described above. This result was previously announced in \cite{KellerLiu23a}.
\end{remark}

\begin{proof}[Proof of theorem \ref{thm:connecting morphism}]
We first apply Lemma~\ref{lemma:homotopy snake lemma} to the homotopy short exact sequen\-ce~(\ref{eq:htpy short exact sequence of Hochschild complexes}) in Proposition~\ref{prop:htpy short exact sequence of Hochschild complexes} for the elements $a=\iota \ten(\Si f\circ \Si \pi)$ of $HH(\ca,M)$, \linebreak $b=-\iota \circ \nu\Si f\circ \nu\Si \pi$ of $HH(\cb,M_\cb)$ and $c=\Si f\circ \Si \pi \circ h_N \circ \iota$ of $HH(\cc,M^\cc)$. Then the assertion follows from that the element $h(b)+p(a)-c=d((h_N \circ \iota)\ten(\Si f\circ \Si \pi))$ is a boundary.
\end{proof}

\section{The structure theorem in the absolute case}

\subsection{Pseudo-compact dg quotients}

Let $\ce$ be an exact pseudo-compact category with enough projective objects. Let $\cp \subseteq \ce$ be the full subcategory of projective objects and $\cm \subseteq \ce$ an additive subcategory containing $\cp$.

\begin{proposition} \label{prop:pseudo-compact dg categories}
The following dg quotients carry natural structures of pseudo-compact dg categories:
\begin{itemize}
\item[a)] the dg derived category $\cd^b_{dg}(\ce)$;
\item[b)] the dg quotient $\cc^b_{dg}(\cm)/\cc^b_{dg}(\cp)$;
\item[c)] if the category $\ce$ is Frobenius, the dg singularity category $\cd^b_{dg}(\ce)/\cd^b_{dg}(\cp)$.
\end{itemize}
\end{proposition}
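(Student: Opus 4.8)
The plan is to replace each of the three dg quotients, up to quasi-equivalence, by a dg category whose morphism complexes are manifestly assembled from the pseudo-compact vector spaces $\ce(X,Y)$ by iterated (completed) tensor products and arbitrary products, and then to observe that such complexes are pseudo-compact and that composition and the differential are continuous. The elementary ingredients are: the dg category $\cc^b_{dg}(\ce)$ of bounded complexes over $\ce$ is already a pseudo-compact dg category, since its degree-$n$ morphism space $\cc^b_{dg}(\ce)(X^\bullet,Y^\bullet)^n=\prod_{j-i=n}\ce(X^i,Y^j)$ is a finite product of pseudo-compact vector spaces and composition and the differential act componentwise through the (continuous) operations of $\ce$; any full dg subcategory of a pseudo-compact dg category is again pseudo-compact; and an arbitrary product of pseudo-compact vector spaces is pseudo-compact, for instance because $\PC k\simeq(\Mod k)\op$ is complete. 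In particular $\cc^b_{dg}(\cm)$, $\cc^b_{dg}(\cp)$ and the full dg subcategory $\cc^b_{dg,ac}(\ce)$ of acyclic complexes are pseudo-compact dg categories.

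For a) I would use the classical resolution model: since $\ce$ has enough projectives, the natural functor from the homotopy category $\ch^{-,b}(\cp)$ of right-bounded complexes of projectives with bounded homology to $\cd^b(\ce)$ is a triangle equivalence, and it lifts to a quasi-equivalence $\cd^b_{dg}(\ce)=\cc^b_{dg}(\ce)/\cc^b_{dg,ac}(\ce)\simeq\cc^{-,b}_{dg}(\cp)$ because right-bounded complexes of projective objects are homotopically projective. The dg category $\cc^{-,b}_{dg}(\cp)$ is pseudo-compact: its morphism complexes are the products $\prod_{j-i=n}\ce(P^i,Q^j)$, now infinite (indexed by a half-line) but still pseudo-compact, with the same componentwise, hence continuous, composition and differential. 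For c), with $\ce$ Frobenius, I would argue the same way using the complete-resolution model: the stable category $\underline{\ce}=\cd^b(\ce)/\cd^b(\cp)$ is triangle equivalent to the homotopy category of acyclic $\Z$-graded complexes of projective-injective objects, and this lifts to $\cd^b_{dg}(\ce)/\cd^b_{dg}(\cp)\simeq\cc_{\mathrm{ac},dg}(\cp)$, whose morphism complexes are the pseudo-compact products $\prod_{j-i=n}\ce(P^i,Q^j)$ over all of $\Z$.

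For b), the subtlest case, I would work with the Drinfeld model directly. Since $\ch^b(\cp)$ is the thick closure inside $\ch^b(\cm)$ of the stalk complexes $[P]$, $P\in\cp$, concentrated in degree $0$, the dg quotient is unchanged if $\cc^b_{dg}(\cp)$ is replaced by the full dg subcategory $\cv$ of $\cc^b_{dg}(\cm)$ spanned by these $[P]$, so $\cc^b_{dg}(\cm)/\cc^b_{dg}(\cp)\simeq\cc^b_{dg}(\cm)/\cv$. In the Drinfeld model of $\cc^b_{dg}(\cm)/\cv$, the degree-$d$ part of the morphism space from $X^\bullet$ to $Y^\bullet$ is $\cc^b_{dg}(\cm)(X^\bullet,Y^\bullet)^d$ together with, for each $n\geq1$ and each $P_1,\dots,P_n\in\cp$, a summand
\[
\bigl(\cc^b_{dg}(\cm)([P_1],Y^\bullet)\ten\Si\ce(P_2,P_1)\ten\cdots\ten\Si\ce(P_n,P_{n-1})\ten\Si\cc^b_{dg}(\cm)(X^\bullet,[P_n])\bigr)^d .
\]
Each intermediate factor $\Si\ce(P_{i+1},P_i)$ is concentrated in degree $-1$, while the two outer factors live in degree intervals fixed by the (finite) amplitudes of $Y^\bullet$ and $X^\bullet$; a short count then shows that a given degree $d$ receives contributions from only finitely many $n$. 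Hence, at each degree, the Drinfeld complex is a direct sum, over those finitely many $n$ and over the tuples $P_1,\dots,P_n$, of tensor products of pseudo-compact vector spaces; its pseudo-compact completion — replacing the sums over tuples by the corresponding products (in the spirit of $\ce(\bigoplus_\alpha P_\alpha,-)=\prod_\alpha\ce(P_\alpha,-)$) and the tensor products by completed tensor products — is a pseudo-compact dg category, and the plan is to show it is quasi-equivalent to $\cc^b_{dg}(\cm)/\cv$.

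The step I expect to cost the most work is verifying the quasi-equivalences with the honest dg quotients and checking that the resulting pseudo-compact structures are natural, i.e. independent of the auxiliary choices of resolutions and of generating sets. For a) and c) this is the standard comparison of dg enhancements of $\cd^b(\ce)$, resp. of $\underline{\ce}$, via homotopy projectivity of right-bounded (resp. suitably truncated) complexes of projectives. For b) the genuine obstacle is the passage from the Drinfeld complex to its pseudo-compact completion in the object variables, which is only an issue when $\cp$ has infinitely many indecomposables: one must check that this completion preserves the quasi-isomorphism type of the morphism complexes, a Mittag-Leffler-type statement that I would deduce from the degreewise finiteness in the bar direction established above together with the exactness of the inverse limits defining $\PC k$.
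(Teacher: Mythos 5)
Your treatments of a) and c) coincide with the paper's: both replace the dg quotient, up to quasi-equivalence, by the resolution model $\cc^{-,b}_{dg}(\cp)$, respectively the complete-resolution model $\cc_{ac,dg}(\cp)$, each of which is a full dg subcategory of the pseudo-compact dg category $\cc_{dg}(\cp)$ and hence inherits the structure. (The paper implements the comparison by exhibiting a right quasi-representable bimodule inducing an equivalence of perfect derived categories rather than by appealing to homotopy projectivity directly, but this is the same argument.)

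For b) you take a genuinely different route, and it contains a gap. The Drinfeld complex of $\cc^b_{dg}(\cm)$ modulo the stalk complexes $[P]$, $P\in\cp$, is built from direct sums over tuples $P_1,\dots,P_n$ and from ordinary tensor products over $k$; these are not pseudo-compact once $\cp$ has infinitely many indecomposables or infinite-dimensional morphism spaces. Your remedy --- replacing the sums by products and the tensor products by completed tensor products --- does produce a pseudo-compact dg category, but the assertion that this completion is quasi-equivalent to the Drinfeld quotient is precisely the point at issue, and it is not a routine Mittag--Leffler argument: the completion map is a degreewise dense inclusion, and such inclusions need not preserve homology (the product can contain cycles that are not approximated by cycles in the sum, and the bar differential mixes the different values of $n$). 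Your finiteness-in-$n$ observation controls the bar direction but says nothing about the completion in the object variables and in the tensor factors, which is where the difficulty lies. The paper sidesteps this entirely: the projective resolution triangles $\bp X \to X \to \ba X \to \Si \bp X$ give a semi-orthogonal decomposition $(\ch^-(\cp),\ch^-_{ac}(\cm))$ of $\ch^-(\cm)$, whence the dg quotient $\cc^-_{dg}(\cm)/\cc^-_{dg}(\cp)$ is quasi-equivalent to the full dg subcategory $\cc^-_{ac,dg}(\cm)\subseteq\cc^-_{dg}(\ce)$ of $\ce$-acyclic right-bounded complexes, which is pseudo-compact for the same elementary reasons as in a) and c); the bounded quotient $\cc^b_{dg}(\cm)/\cc^b_{dg}(\cp)$ then embeds quasi-fully faithfully into this one. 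You should either adopt such an identification by a full subcategory of an ambient pseudo-compact dg category, or supply an actual proof that your completion map is a quasi-equivalence.
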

\begin{proof}
a) By definition, the dg derived category $\ca=\cd^b_{dg}(\ce)$ is the dg quotient of the dg category of bounded complexes $\cc^b_{dg}(\ce)$ by its full dg subcategory of $\ce$-acyclic complexes. Denote by $\cb$ the dg category $\cc^{-,b}_{dg}(\cp)$ of right bounded
complexes with components in $\cp$ which are $\ce$-acyclic in all
degrees $p\ll 0$. We define a dg $\ca$-$\cb$-bimodule $X$ by putting
\[
X(P,M)=\cc_{dg}(\ce)(P,M)\: .
\]
It is not hard to check that $X$ is right quasi-representable and that the derived functor
$?\lten_\ca X$ induces an equivalence $\per \ca \iso \per \cb$. Thus, the dg derived category $\cd^b_{dg}(\ce)$ is naturally quasi-equivalent to the dg category $\cc^{-,b}_{dg}(\cp)$. This dg category is a full dg subcategory of the pseudo-compact dg category $\cc_{dg}(\cp)$ of complexes with components in $\cp$ and therefore inherits a pseudo-compact dg category structure.

b) Denote by $\ch^-(\cm)$ the homotopy category of right bounded complexes
with components in $\cm$ and by $\ch^-(\cp)$ its full subcategory whose objects are
the complexes with components in $\cp$. For each object $X$ of $\ch^-(\cm)$, we have
a triangle
\[
\begin{tikzcd}
\bp X \arrow{r} & X \arrow{r} & \ba X \arrow{r} & \Si \bp X \: ,
\end{tikzcd}
\]
where $\bp X \to X$ is a projective resolution of the right bounded complex $X$ and
$\ba X$ belongs to the subcategory $\ch^-_{ac}(\cm) \subseteq \ch^-(\cm)$ of $\ce$-acyclic complexes with components in $\cm$. This shows that the triangulated category $\ch^-(\cm)$
admits the semi-orthogonal decomposition $(\ch^-(\cp), \ch^-_{ac}(\cm))$. In particular,
the quotient functor induces an equivalence
\[
\ch^-(\cm)/\ch^-(\cp) \xlongrightarrow{\sim} \ch^-_{ac}(\cm)\: .
\]
Clearly, the dg subcategory $\cc^-_{ac,dg}(\cm)$ inherits the structure of pseudo-compact dg category from $\cc^-_{dg}(\cm) \subseteq \cc^-_{dg}(\ce)$. Thus, the dg quotient $\cc^-_{dg}(\cm)/\cc^-_{dg}(\cp)$ becomes a pseudo-compact dg category. One checks easily that the canonical dg functor
\[
\cc^b_{dg}(\cm)/\cc^b_{dg}(\cp) \to \cc^-_{dg}(\cm)/\cc^-_{dg}(\cp)
\]
is quasi-fully faithful, which implies the claim.

c) By definition, the dg singularity category is the dg quotient of the dg derived category $\ca=\cd^b_{dg}(\ce)$ by its full dg subcategory of bounded complexes with components in $\cp$. Denote by $\cb'=\cc_{ac,dg}(\cp)$ the dg category of $\ce$-acyclic complexes with components in $\cp$.
We define a dg $\ca$-$\cb'$-bimodule $X$ by putting
\[
X(P,M)=\cc_{dg}(\ce)(P,\Si M)\: .
\]
It is not hard to check that $X$ is right quasi-representable and that the derived functor $?\lten_\ca X$ induces an equivalence $\per(\cd^b_{dg}(\ce)/\cd^b_{dg}(\cp))\iso \per \cb'$, \cf the proof of Lemma~5.12 of \cite{HuaKeller18}. Thus, the dg singularity category of $\ce$ is naturally quasi-equivalent to the dg category $\cc_{ac,dg}(\cp)$. We conclude as in part~a).
\end{proof}

\subsection{The structure theorem}

The following theorem was previously announced in \cite{KellerLiu23a}.

\begin{theorem} \label{thm:main} Let $k$ be a field of characteristic $0$ and $d$ a positive integer. Let $\cc$ be a $k$-linear algebraic triangulated category which is Karoubian and $\Hom$-finite.
Assume that there is a small $k$-linear Frobenius exact pseudo-compact category $\ce$ with the full subcategory $\cp$ of projective-injective objects and the stable category $\cc$. Then the following are equivalent.
\begin{itemize}
\item[i)] There exists a $(d+1)$-dimensional deformed dg preprojective algebra
\[
\Pi=\Pi_{d+1}(l, V_c,\eta,w)
\]
and a triangle equivalence $\cc_\Pi \iso \cc$.
\item[ii)] The triangulated category $\cc$ contains a $d$-cluster-tilting object $T$ and the dg category $\cc_{dg}=\cd^b_{dg}(\ce)/\cd^b_{dg}(\cp)$ carries a right $d$-Calabi--Yau structure.
\end{itemize}
\end{theorem}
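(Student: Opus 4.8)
The plan is to establish the two implications separately, the implication from~i) to~ii) being the relatively easy one. So suppose first that $\cc\simeq\cc_\Pi$ for $\Pi=\Pi_{d+1}(l,V_c,\eta,w)$, the idempotent completion of $\per\Pi/\pvd\Pi$. A canonical $d$-cluster-tilting object $T$ (the image of $\Pi$) already exists: for $d\geq 2$ this is Guo's theorem (Theorem~2.2 of \cite{Guo11}, after Amiot for $d=2$), and for $d=1$ one reduces, up to derived Morita equivalence, to a species $S$ of Dynkin type and uses the triangle equivalence $\cd^b(A)/\tau\iso\cc_\Pi$ with $A$ the associated finite-dimensional hereditary $k$-algebra. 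To endow $\cc_{dg}=\cc_{\Pi,dg}$ with a right $d$-Calabi--Yau structure, I would start from Van den Bergh's left $(d+1)$-Calabi--Yau structure on $\Pi$ (the implication from~(2) to~(1) in Theorem~9.4 of \cite{VandenBergh15}), transport it to a right $(d+1)$-Calabi--Yau structure on the Koszul dual $\Pi^!$ by Proposition~\ref{prop:left and right} (legitimate since $\Pi$ is smooth, complete and $l$-augmented), and then push it along the long exact sequence in cyclic homology of the defining exact sequence $\pvd\!_{dg}\Pi\to\per\!_{dg}\Pi\to\cc_{\Pi,dg}$, the dual connecting map producing a candidate class in $DHC_{-d}(\cc_{\Pi,dg})$. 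Non-degeneracy is then checked at the level of trace forms on the Yoneda algebras in the triangulated category $\cc_\Pi$; by the explicit description of the connecting morphism in Hochschild homology of Theorem~\ref{thm:A} (see Remark~\ref{rk:compatibility}), these forms are precisely the ones produced by Amiot's construction, which are known to define a $d$-Calabi--Yau structure on $\cc_\Pi$ by the work of Amiot and Guo.

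For the converse, the first step is to build a good pseudo-compact dg model of $\cc$. Following Kalck--Yang \cite{KalckYang20} and applying Palu's diagram \cite{Palu09a} to the $d$-cluster-tilting object $T$ in the Frobenius exact category $\ce$, I would construct a connective, smooth, complete $l$-augmented pseudo-compact dg algebra $\ol{\Ga}$ together with a quasi-equivalence identifying $\cc_{dg}$ with the dg quotient $\per\!_{dg}\ol{\Ga}/\pvd\!_{dg}\ol{\Ga}$. Connectivity, completeness and the augmentation follow from Proposition~\ref{prop:complete augmented} and Remark~\ref{rk:Wedderburn} (over a field of characteristic $0$ the radical quotient of $H^0(\ol{\Ga})=\End_\cc(T)$ is finite-dimensional and separable), while smoothness can be obtained by checking that $\ol{\Ga}^!$ is proper, using Corollary~\ref{cor:smooth}. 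The aim is then to equip $\ol{\Ga}$ with a left $(d+1)$-Calabi--Yau structure $[\tilde{\xi}]$, so that Van den Bergh's recognition result (Theorem~9.4 of \cite{VandenBergh15}) forces $\ol{\Ga}$ to be quasi-isomorphic to a $(d+1)$-dimensional deformed dg preprojective algebra $\Pi$, whence $\cc\simeq\cc_\Pi$.

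\emph{The hard part is the construction and the non-degeneracy of $[\tilde{\xi}]$.} For the candidate, one runs the preceding argument backwards: the given right $d$-Calabi--Yau structure on $\cc_{dg}$ is pulled back, through the long exact sequence in cyclic homology of $\pvd\!_{dg}\ol{\Ga}\to\per\!_{dg}\ol{\Ga}\to\cc_{dg}$ together with the Koszul-duality bijection of Proposition~\ref{prop:left and right}, to a class in $HN_{d+1}(\ol{\Ga})$. The crux is then a non-degeneracy-detection statement asserting that the dual connecting morphism in Hochschild homology is injective enough that non-degeneracy of $[\tilde{\xi}]$ can be read off from non-degeneracy of the trace forms it induces on the graded endomorphism algebras $\bigoplus_i\pvd\ol{\Ga}(S,\Si^i S)$ attached to the simple $\ol{\Ga}$-modules $S$. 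To verify the latter, I would compare these forms with the linear forms of part~c) of Theorem~5.4 of \cite{KellerReiten07}, which are known to give a $d$-Calabi--Yau structure on the triangulated category $\cc$; by Theorem~\ref{thm:A} (Amiot's construction lifted to Hochschild homology, Remark~\ref{rk:compatibility}) the two families coincide, so $[\tilde{\xi}]$ is non-degenerate. With $[\tilde{\xi}]$ in hand, Van den Bergh's theorem yields a quasi-isomorphism $\ol{\Ga}\simeq\Pi$ onto a $(d+1)$-dimensional deformed dg preprojective algebra, and unwinding the identifications $\cc_{dg}\simeq\per\!_{dg}\ol{\Ga}/\pvd\!_{dg}\ol{\Ga}\simeq\per\!_{dg}\Pi/\pvd\!_{dg}\Pi$ gives the triangle equivalence $\cc\iso\cc_\Pi$, completing the proof.
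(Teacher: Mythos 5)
Your proposal follows essentially the same route as the paper: for i)$\Rightarrow$ii) the transport of Van den Bergh's left structure through the Koszul-duality bijection and the dual connecting morphism, with non-degeneracy checked on trace forms via Amiot's construction (Theorem~\ref{thm:A}); for ii)$\Rightarrow$i) the Kalck--Yang/Palu construction of the connective smooth pseudo-compact dg algebra $\ol{\Ga}$, the detection of non-degeneracy by $D\delta$ via comparison with the Keller--Reiten forms, and Van den Bergh's recognition theorem. The only minor deviation is that the paper deduces smoothness of $\ol{\Ga}$ from the containment $\pvd\ol{\Ga}\subseteq\per\ol{\Ga}$, which is immediate from Palu's diagram, rather than from properness of $\ol{\Ga}^!$ (which is not directly available at that stage); both are implications within Corollary~\ref{cor:smooth}.
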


\begin{remark}
Thus, if statement ii) of the theorem holds, then the endomorphism algebra of $T$ is isomorphic to $H^0(\Pi)$.
\end{remark}

\begin{example} \label{ex:example}
This example serves to illustrate the construction of $\Pi_d(l, V_c, \eta, w)$ in the case where $k$ is not algebraically closed. Starting from the $\R$-species
\tikzset{
    lablp/.style={anchor=south, rotate=39, inner sep=.5mm},
    lablm/.style={anchor=south, rotate=-39, inner sep=.5mm},
}
\[
\begin{tikzcd}
	& \R\arrow[dr,"\R c\oplus\R ic" lablm] & \\
	\R\arrow[ur,"\R a" lablp] & & \C\arrow[ll,"\R b\oplus\R bi"]
\end{tikzcd}
\]
with the potential $W=abc$ we obtain the graded double $\R$-species
\tikzset{
    lablsouthp/.style={anchor=south, rotate=44, inner sep=.5mm},
    lablnorthp/.style={anchor=north, rotate=44, inner sep=.5mm},
    lablsouthm/.style={anchor=south, rotate=-44, inner sep=.5mm},
    lablnorthm/.style={anchor=north, rotate=-44, inner sep=.5mm},
}
\[
\begin{tikzcd}[row sep=1.5cm, column sep=1.5cm]
	& \R\arrow[dr,shift right=0.5ex,"\R c\oplus \R ic" lablnorthm,swap]
	\arrow[dl,shift right=0.5ex,"\R a^*" lablsouthp,swap,red]
	\arrow[out=45,in=135,loop,purple,"\R t_{2}",swap] & \\
	\R\arrow[ur,shift right=0.5ex,"\R a" lablnorthp,swap]
	\arrow[rr,shift right=0.5ex,"\R b^*\oplus \R ib^*",swap,red]
	\arrow[out=157,in=247,loop,purple,"\R t_{1}",swap] & &
	\C\arrow[ll,shift right=0.5ex,"\R b\oplus\R bi",swap]
	\arrow[ul,shift right=0.5ex,"\R c^*\oplus\R c^*i" lablsouthm,swap,red]
	\arrow[out=293,in=23,loop,purple,"\C t_{3}",swap]
\end{tikzcd}
\]
with the elements $a^*$, $b^*$, $c^*$ of degree $-1$ and $t_1$, $t_2$, $t_3$ of degree $-2$. It gives rise to the $l$-augmented pseudo-compact dg $\R$-algebra 
$\Pi=T_l(V_c\oplus\R t_1\oplus\R t_2\oplus\C t_3)$, where $l$ is the semisimple $\R$-algebra $\R\times\R\times\C$ and $V_c$ is the $l$-bimodule with the $\R$-basis
\[
a\ko b\ko bi\ko c\ko ic\ko a^*\ko b^*\ko ib^*\ko c^*\ko c^*i\: .
\]
In the notations in section~\ref{ss:deformed dg preprojective algebras}, we have $z=t_1+t_2+t_3$ and $\sigma$ is the Casimir element associated with the trace form $\Tr(\lambda_1,\lambda_2,\lambda_3)=\lambda_1+\lambda_2+\mathrm{Re}(\lambda_3)$.

For a suitable choice of non-degenerate and graded anti-symmetric element $\eta$ in $V_c\ten_{l^e}V_c$, we obtain the differential determined by
\[
d(t_1)=bb^*+bib^*-a^*a \ko
d(t_2)=aa^*-c^*c-c^*ic \ko
d(t_3)=(1+i)(cc^*-b^*b)+(1-i)(cc^*-b^*b)i
\]
and
\[
d(a^*) = -bc \ko
d(b^*) = -ca \ko
d(c^*) = -ab \: .
\]
Then the endomorphism algebra of the $2$-cluster-tilting object $P_1\oplus P_3\oplus\tau^{-2}P_1$ in the cluster category associated with the $\R$-species
\[
\begin{tikzcd}
	\R\arrow[r,"\R"]&\R\arrow[r,"\C"]&\C
\end{tikzcd}
\]
is isomorphic to $H^0(\Pi)$.
\end{example}

\subsection{Proof of the implication from i) to ii) in Theorem~\ref{thm:main}} \label{ss:proof from i) to ii) in main}

By Lemma~2.9 of \cite{Plamondon11}, the endomorphism algebra of $\Pi$ in $\cc_\Pi$ is isomorphic to $H^0(\Pi)$. Since the category $\cc$ is $\Hom$-finite, this implies that $H^0(\Pi)$ is finite-dimensional. If $d$ equals $1$, after possibly replacing $\Pi$ with a derived Morita equivalent pseudo-compact dg algebra, we may and will assume that it is the $2$-dimensional dg preprojective algebra associated with a species $S$ of Dynkin type \cite{DlabRingel74a}. By Theorem~1 and 2 of \cite{Keller05} and the proof of Theorem~4.10 of \cite{Amiot09}, there is a triangle equivalence $\cd^b(A)/\tau \iso \cc_\Pi$ taking $A$ to $\Pi$, where $A$ is the finite-dimensional hereditary $k$-algebra associated with $S$. In particular, the cluster category $\cc_\Pi$ contains a canonical $1$-cluster-tilting object. If $d$ is greater than or equal to $2$, by part~2) of Theorem~2.2 of \cite{Guo11}, the cluster category $\cc_\Pi$ contains a canonical $d$-cluster-tilting object. Since there is a triangle equivalence $\cc_\Pi \iso \cc$, the triangulated category $\cc$ also contains a $d$-cluster-tilting object. By the implication from (2) to (1) in Theorem~9.4 (also holds if the Calabi--Yau dimension is $2$) of \cite{VandenBergh15}, the pseudo-compact dg algebra $\Pi$ carries a left $(d+1)$-Calabi--Yau structure. By Proposition~\ref{prop:left and right}, it yields a right $(d+1)$-Calabi--Yau structure on $\Pi^!$. Since the dg category $\pvd\!_{dg}\Pi$ is quasi-equivalent to $\per\!_{dg}(\Pi^!)$ and the dg category $\per\!_{dg}(\Pi^!)$ is derived Morita equivalent to the dg algebra $\Pi^!$, we obtain a right $(d+1)$-Calabi--Yau structure $[\tilde{x}]$ on $\pvd\!_{dg}\Pi$. The exact sequence of dg categories
\[
\begin{tikzcd}
0\arrow{r} & \pvd\!_{dg}\Pi\arrow{r} & \per\!_{dg}\Pi\arrow{r} & \cc_{\Pi,dg}\arrow{r} & 0
\end{tikzcd}
\]
yields a connecting morphism $\delta \colon HC_{-d}(\cc_{\Pi,dg})\to HC_{-d-1}(\pvd\!_{dg}\Pi)$ in cyclic homology. We claim that the image $[x]$ of $[\tilde{x}]$ under the dual map
\[
D\delta \colon DHC_{-d-1}(\pvd\!_{dg}\Pi)\to DHC_{-d}(\cc_{\Pi,dg})
\]
is a right $d$-Calabi--Yau structure on $\cc_{\Pi,dg}$. This means that the bifunctorial morphism
\[
\begin{tikzcd}
\cc_{\Pi,dg}(X,Y)\arrow{r} & D\cc_{\Pi,dg}(Y,\Si^d X)
\end{tikzcd}
\]
in $\cd(k)$ obtained from the image of $[x]$ under the canonical map 
\[
\begin{tikzcd}
DHC_{-d}(\cc_{\Pi,dg})\arrow{r} & DHH_{-d}(\cc_{\Pi,dg})
\end{tikzcd}
\]
is an isomorphism. By the bifunctoriality, the induced morphism in homology is then determined by the linear forms 
\[
\psi_Z\colon \cc_\Pi(Z,\Si^d Z)\longrightarrow k \ko Z \in \cc_\Pi
\]
associated with $[x]$.

The right $(d+1)$-Calabi--Yau structure $[\tilde{x}]$ on $\pvd\!_{dg}\Pi$ defines a $(d+1)$-Calabi--Yau structure on the triangulated category $\pvd \Pi$ determined by the linear forms
\[
\tilde{\psi}_X \colon (\pvd \Pi)(X, \Si^{d+1}X) \longrightarrow k \ko X \in \pvd \Pi \: .
\]
By Remark~\ref{rk:compatibility} (based on Theorem~\ref{thm:connecting morphism}), the linear forms $\psi_Z$, $Z\in \cc_\Pi$, are obtained from the $\tilde{\psi}_X$, $X \in \pvd \Pi$, by Amiot's construction. Extending Theorem~2.1 of \cite{Amiot09}, it is showed in the proof of Theorem~2.2 of \cite{Guo11} that the linear forms $\psi_Z$, $Z\in \cc_\Pi$, determine a $d$-Calabi--Yau structure on the triangulated category $\cc_\Pi$. Therefore, the dg category $\cc_{\Pi,dg}$ carries a right $d$-Calabi--Yau structure and hence so does $\cc_{dg}$. This concludes the proof of the implication from i) to ii) in Theorem~\ref{thm:main}.

\subsection{Proof of the implication from ii) to i) in Theorem~\ref{thm:main}} \label{ss:proof from ii) to i) in main}

Denote by $\cm\subseteq \ce$ the closure under finite direct sums and direct summands of $T$ and the projective-injective objects in $\ce$. Then $\cm$ is a $d$-cluster-tilting subcategory of $\ce$. It is shown in Corollary~3.2 of \cite{KalckYang20} that $T$ is a silting object in $\ch^b(\cm)/\ch^b(\cp)$. We denote by $\ol{\Ga}$ its endomorphism algebra in the dg quotient $\cc^b_{dg}(\cm)/\cc^b_{dg}(\cp)$. By part~b) of Proposition~\ref{prop:pseudo-compact dg categories}, this dg quotient carries a pseudo-compact dg category structure and the dg algebra $\ol{\Ga}$ becomes a pseudo-compact dg algebra.

By Proposition~3, Lemma~7 of \cite{Palu09a} and part~(c) of Theorem~4.1 of \cite{KalckYang20}, 
the dg category $\cc_{dg}$ fits into the commutative diagram
\begin{equation} \label{eq:Palu's diagram 1}
\begin{tikzcd}
 & & 0 \arrow{d} & 0 \arrow{d} & \\
 & & \cc^b_{dg}(\cp) \arrow[equal]{r}\arrow{d} & \cc^b_{dg}(\cp) \arrow{d} &  \\
0 \arrow{r} & \pvd\!_{dg}\ol{\Ga} \arrow{r}\arrow[equal]{d} & \cc^b_{dg}(\cm) \arrow{r}\arrow{d} & \cd^b_{dg}(\ce)\arrow{r}\arrow{d} & 0 \\
0 \arrow{r} & \pvd\!_{dg}\ol{\Ga} \arrow{r} & \per\!_{dg}\ol{\Ga} \arrow{r}\arrow{d} & \cc_{dg}\arrow{r}\arrow{d} & 0 \\
 & & 0 & 0 &
\end{tikzcd}
\end{equation}
of dg categories with exact rows and columns and such that $T\in \cc^b_{dg}(\cm)$ is sent to \linebreak $\ol{\Ga}\in \per\!_{dg}\ol{\Ga}$. Notice that the exactness of the middle row shows that we can identify
$\pvd \ol{\Ga}$ with the full subcategory $\ch^b_{ac}(\cm)$ of $\per \cm$ formed by the bounded $\ce$-acyclic complexes with components in $\cm$.

Denote the radical quotient of $H^0(\ol{\Ga})$ by $l$. Since $k$ is of characteristic $0$, it is separable. Since $H^0(\ol{\Ga})=\End_\cc(T)$ is finite-dimensional, by Remark~\ref{rk:Wedderburn}, after possibly replacing $\ol{\Ga}$ with a quasi-isomorphic pseudo-compact algebra, we may and will assume that it is complete $l$-augmented pseudo-compact and concentrated in non-positive degrees. Then by the implication from ii) to i) in Corollary~\ref{cor:smooth}, it is smooth. By part~(c) of Theorem~1.5 of \cite{Keller98}, the commutative diagram (\ref{eq:Palu's diagram 1}) yields a morphism of long exact sequences in cyclic homology
\[
\begin{tikzcd}
\cdots \arrow{r} & HC_{-d}(\cd^b_{dg}(\ce))\arrow{r}\arrow{d} & HC_{-d-1}(\pvd\!_{dg}\ol{\Ga})\arrow{r}\arrow[equal]{d} & \cdots \\
\cdots \arrow{r} & HC_{-d}(\cc_{dg})\arrow[swap]{r}{\delta} & HC_{-d-1}(\pvd\!_{dg}\ol{\Ga})\arrow{r} & \cdots
\end{tikzcd}
\]
with the connecting morphism $\delta$.

\begin{proposition} \label{prop:detect non-degeneracy}
The dual map $D\delta \colon DHC_{-d-1}(\pvd\!_{dg}\ol{\Ga})\to DHC_{-d}(\cc_{dg})$ detects non-degeneracy.
\end{proposition}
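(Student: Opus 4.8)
The plan is to reduce everything to a comparison of linear forms on triangulated categories, following the outline given before the statement. So let $[\tilde{x}]\in DHC_{-d-1}(\pvd\!_{dg}\ol{\Ga})$ be such that $[x]=D\delta([\tilde{x}])$ is a right $d$-Calabi--Yau structure on $\cc_{dg}$; we must show that $[\tilde{x}]$ is a right $(d+1)$-Calabi--Yau structure on $\pvd\!_{dg}\ol{\Ga}$. First I would make this non-degeneracy into a finite check: since $\ol{\Ga}$ is smooth and complete $l$-augmented, Koszul duality yields a quasi-equivalence $\pvd\!_{dg}\ol{\Ga}\simeq \per\!_{dg}(\ol{\Ga}^!)$ with $\ol{\Ga}^!$ proper, and the triangulated category $\pvd\ol{\Ga}$ is generated, as a thick subcategory, by the finitely many simple modules $S_1,\dots,S_n$. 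Hence the $\pvd\!_{dg}\ol{\Ga}$-bimodule morphism attached to $[\tilde{x}]$ is invertible if and only if, for all $i$, $j$ and $p$, the pairings
\[
\Hom_{\pvd\ol{\Ga}}(S_i,\Si^p S_j)\otimes \Hom_{\pvd\ol{\Ga}}(S_j,\Si^{d+1-p}S_i)\longrightarrow k
\]
induced by the associated family of linear forms $\tilde{\psi}_X\colon \pvd\ol{\Ga}(X,\Si^{d+1}X)\to k$, $X\in\pvd\ol{\Ga}$, are perfect; equivalently, $[\tilde{x}]$ is non-degenerate precisely when the $\tilde{\psi}_X$ constitute a $(d+1)$-Calabi--Yau structure on the triangulated category $\pvd\ol{\Ga}$.

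Next I would bring in the structure on $\cc$. The hypothesis that $[x]$ is a right $d$-Calabi--Yau structure on $\cc_{dg}$ says that the associated linear forms $\psi_Z\colon \cc(Z,\Si^d Z)\to k$, $Z\in\cc$, define a $d$-Calabi--Yau structure on the triangulated category $\cc=H^0(\cc_{dg})$. By Remark~\ref{rk:compatibility}, which rests on Theorem~\ref{thm:connecting morphism} applied to the exact sequence $0\to \pvd\!_{dg}\ol{\Ga}\to \per\!_{dg}\ol{\Ga}\to \cc_{dg}\to 0$, the family $\{\psi_Z\}$ is obtained from the family $\{\tilde{\psi}_X\}$ by Amiot's construction. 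On the other hand, part~c) of Theorem~5.4 of \cite{KellerReiten07}, applied to the $d$-Calabi--Yau category $\cc$ together with the $d$-cluster-tilting object $T$ (whose endomorphism algebra is $H^0(\ol{\Ga})$), produces linear forms $\phi_X\colon \pvd\ol{\Ga}(X,\Si^{d+1}X)\to k$ that form a $(d+1)$-Calabi--Yau structure on $\pvd\ol{\Ga}$ and from which Amiot's construction again recovers $\{\psi_Z\}$.

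It then suffices to prove that $\tilde{\psi}_X=\phi_X$ for all $X$: since the $\phi_X$ are non-degenerate, so are the $\tilde{\psi}_X$, and by the first paragraph $[\tilde{x}]$ is a right $(d+1)$-Calabi--Yau structure on $\pvd\!_{dg}\ol{\Ga}$. This identification is the main obstacle. I expect to derive it from Theorem~\ref{thm:connecting morphism}, which gives a precise (if only partial) description of the connecting morphism $\delta$, hence of the linear data on $\pvd\ol{\Ga}$ underlying any preimage of $[x]$ under $D\delta$. Because $\pvd\ol{\Ga}$ is generated by the simple modules, a linear form on it is determined by its values on the $\Hom$-spaces between the $S_i$; matching the formula of Theorem~\ref{thm:connecting morphism} against Keller--Reiten's construction of the $\phi_X$ on these spaces should force $\tilde{\psi}_X=\phi_X$. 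Some care is required here, since $D\delta$ need not be injective on all of $DHC_{-d-1}(\pvd\!_{dg}\ol{\Ga})$: one must use that both $\tilde{\psi}_X$ and $\phi_X$ come from honest $(d+1)$-Calabi--Yau-type data and that Theorem~\ref{thm:connecting morphism} pins down exactly the component that the pairings above see.
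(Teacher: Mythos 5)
Your outline follows the paper's strategy: reduce non-degeneracy of $[\tilde{x}]$ to a statement about the linear forms $\tilde{\psi}_Z$ on $\Ext^{d+1}_{\ol{\Ga}}(Z,Z)$ for the simple modules $Z$, invoke Remark~\ref{rk:compatibility} to say that the forms $\psi$ on $\cc$ associated with $[x]=D\delta[\tilde{x}]$ are obtained from the $\tilde{\psi}_Z$ by Amiot's construction, bring in the non-degenerate forms $\phi_Z$ of part~c) of Theorem~5.4 of \cite{KellerReiten07}, and conclude by showing $\tilde{\psi}_Z=\phi_Z$. But the identity $\tilde{\psi}_Z=\phi_Z$ is precisely the content of the proposition, and you leave it at ``matching the formula of Theorem~\ref{thm:connecting morphism} against Keller--Reiten's construction should force'' the equality. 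The paper's proof consists almost entirely of carrying out this matching: one realises $Z$ via a conflation $0\to K\to M_1\to M_0\to 0$ spliced with an $\ce$-acyclic complex into a complex $M$ with components in $\cm$, so that $M^\wg=\cm(?,M)$ is a projective resolution of $Z$; a class $\alpha\in\Ext^{d+1}_{\ol{\Ga}}(Z,Z)$ then lifts uniquely through the bijection $g\mapsto \Si^d\iota\circ g\circ\pi$ to a morphism $f\colon M''\to \Si^{d+1}M'$, producing the fraction $\beta=f\circ s^{-1}$ with $\phi_Z(\alpha)=\psi_{\Si^2 K}(\beta)$; and this fraction is exactly of the shape to which Amiot's formula (hence Theorem~\ref{thm:connecting morphism}) applies, yielding $\psi_{\Si^2K}(\beta)=\tilde{\psi}_Z(\alpha)$. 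Without this explicit unwinding the argument is not complete.

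A second, related point: your fallback formulation --- that Amiot's construction recovers $\{\psi_Z\}$ both from $\{\tilde{\psi}_X\}$ and from $\{\phi_X\}$, so that one should be able to conclude $\tilde{\psi}_X=\phi_X$ --- would indeed require some injectivity of Amiot's assignment (or of $D\delta$) on the relevant classes, and as you note this is not available. The paper sidesteps this entirely: it never compares two preimages of $\psi$. Rather, $\phi_Z(\alpha)$ is \emph{defined} as the value $\psi_{\Si^2K}(\beta)$ of the given forms on $\cc$ at a specific fraction, and that value is computed to be $\tilde{\psi}_Z(\alpha)$ by a single application of the identity $\psi=\mathrm{Amiot}(\tilde{\psi})$ at that fraction. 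So the direction of the argument matters, and the injectivity concern you raise is a sign that the reduction has not yet been set up in the form in which it actually closes.
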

\begin{proof}
Let $[\tilde{x}]$ be a class in $DHC_{-d-1}(\pvd\!_{dg}\ol{\Ga})$ such that its image $[x]$ under $D\delta$ is non-degenerate. We want to prove that $[\tilde{x}]$ is non-degenerate, 
which means that the bifunctorial morphism
\begin{equation} \label{eq: d+1 non-degeneracy 1}
\begin{tikzcd}
(\pvd\!_{dg}\ol{\Ga})(X,Y)\arrow{r} & D(\pvd\!_{dg}\ol{\Ga})(Y,\Si^{d+1}X)
\end{tikzcd}
\end{equation}
in $\cd(k)$ obtained from the image of $[\tilde{x}]$ under the canonical map 
\[
\begin{tikzcd}
DHC_{-d-1}(\pvd\!_{dg}\ol{\Ga})\arrow{r} & DHH_{-d-1}(\pvd\!_{dg}\ol{\Ga})
\end{tikzcd}
\]
is an isomorphism. It suffices to check that the morphism (\ref{eq: d+1 non-degeneracy 1}) is an isomorphism for simple (pseudo-compact) dg $\ol{\Ga}$-modules $X$ and $Y$ (which are concentrated in degree $0$) since these dg modules generate the triangulated category $\pvd \ol{\Ga}$. By the bifunctoriality, the induced morphism in homology is then determined by the linear forms 
\[
\tilde{\psi}_Z\colon \Ext^{d+1}_{\ol{\Ga}}(Z,Z)\longrightarrow k
\]
associated with $[\tilde{x}]$, where $Z$ runs through the simple dg $\ol{\Ga}$-modules.

Since, by assumption, the class $[x]$ is non-degenerate in $DHC_{-d}(\cc_{dg})$, it defines a \linebreak $d$-Calabi--Yau structure on the triangulated category $\cc$ determined by the linear forms
\[
\psi_X \colon \cc(X, \Si^d X) \longrightarrow k \ko X \in \cc \: ,
\]
obtained from the $\tilde{\psi}_Z$, $Z\in \pvd \ol{\Ga}$, by Amiot's construction, \cf Remark~\ref{rk:compatibility}.
Using this structure, from part~c) of Theorem~5.4 of \cite{KellerReiten07}, for a simple dg $\ol{\Ga}$-module $Z$, we obtain a functorial isomorphism
\begin{equation} \label{eq:KR}
\begin{tikzcd}
(\per\cm)(Z,?)\arrow{r}{\sim} & D(\per\cm)(?,\Si^{d+1}Z)\: .
\end{tikzcd}
\end{equation}
It is completely determined by the linear form 
\[
\phi_Z\colon (\per\cm)(Z,\Si^{d+1}Z) \longrightarrow k
\]
obtained as the image of the identity of $Z$. So it suffices to prove that $\tilde{\psi}_Z$ coincides with $\phi_Z$, \ie that the construction in the proof of part~c) of Theorem~5.4 of \cite{KellerReiten07} is essentially inverse to Amiot's.
For this, let us recall the construction of the isomorphism (\ref{eq:KR}): for any simple dg $\ol{\Ga}$-module $Z$, by the proof of part~b) of Theorem~5.4 of \cite{KellerReiten07}, there is a conflation
\begin{equation} \label{eq:conflation}
\begin{tikzcd}
0 \arrow{r} & K \arrow{r} & M_1 \arrow{r} & M_0 \arrow{r} & 0
\end{tikzcd}
\end{equation}
in $\ce$ with $M_0$ and $M_1$ in $\cm$ and an isomorphism between $Z$ and the cokernel of the morphism $\cm(?,M_1) \to \cm(?,M_0)$. By part~a) of Theorem~5.4 of \cite{KellerReiten07}, there is an $\ce$-acyclic complex
\begin{equation} \label{eq:complex}
\begin{tikzcd}
0 \arrow{r} & M_{d+1} \arrow{r} & M_d \arrow{r} & \cdots \arrow{r} & M_2 \arrow{r} & K \arrow{r} & 0
\end{tikzcd}
\end{equation}
in $\ce$ with all $M_i$ in $\cm$. Let $M$ be the $\ce$-acyclic complex obtained by splicing the 
complexes (\ref{eq:conflation}) and (\ref{eq:complex}), where the component $M_0$ is in degree $0$. 
Since all the objects $M_i$ lie in $\cm$ and $\cm$ is $d$-rigid, the complex $M^\wg=\cm(?,M)$
is a projective resolution of $Z$ as an $\cm$-module. Since the object $Z$ is right orthogonal to the subcategory $\ch^b(\cp)$ in $\ch^b(\cm)$, we can compute $\Ext^{d+1}_{\ol{\Ga}}(Z,Z)$ using the resolution $M^\wg$.
Let $\alpha$ be a class in $\Ext^{d+1}_{\ol{\Ga}}(Z,Z)$. It corresponds to a morphism $M^\wg \to \Si^{d+1} M^\wg$
in $\per \cm$ or equivalently, to a morphism $M \to \Si^{d+1} M$, still denoted by $\alpha$,
in the homotopy category $\ch^b(\cm)$.
Let $M' \subseteq M$ be the subcomplex $M_1 \to M_0$ and $M''$ the quotient $M/M'$. Notice that by the conflation (\ref{eq:conflation}), we have a
canonical quasi-isomorphism $\Si K \iso M'$. Consider the following diagram
\[
\begin{tikzcd}
M \arrow[swap]{d}{\alpha} \arrow{r}{\pi} & M'' \arrow{r}{s} \arrow[dashed]{d}{f} & \Si M' \arrow{r}{\iota} & \Si M \\
\Si^{d+1} M                 & \Si^{d+1} M'\arrow{l}{\Si^d \iota} & &
\end{tikzcd}
\]
in $\ch^b(\cm)$. By direct inspection (or using the canonical weight structure on $\ch^b(\cm)$), we see that the map
\[
\begin{tikzcd}
\ch^b(\cm)(M'', \Si^{d+1} M') \arrow{r} & \ch^b(\cm)(M, \Si^{d+1} M)
\end{tikzcd}
\]
taking $g$ to $\Si^d \iota \circ g \circ \pi$
is a bijection so that there is a unique morphism $f$ making the above square commutative. Since the complex $M$ is $\ce$-acyclic, we have a well-defined morphism $\beta = f\circ s^{-1}$ in
\[
\begin{tikzcd}
\cd^b(\ce)(\Si M',\Si^{d+1} M') \arrow[no head]{r}{\sim} & \cd^b(\ce)(\Si^2 K, \Si^{d+2} K) \arrow{r}{\sim} & \cc(\Si^2 K, \Si^{d+2} K) \: .
\end{tikzcd}
\]
The construction in the proof of part~b) of Theorem~5.4 of \cite{KellerReiten07} amounts to defining
\[
\phi_Z(\alpha) = \psi_{\Si^2 K}(\beta)\: .
\]
Now we have $\phi_Z(\alpha) = \phi_Z(\Si^d \iota \circ f \circ \pi)$.
This means that the linear form $\psi_{\Si^2 K}$ is obtained via Amiot's construction from the family of linear forms associated with $[\tilde{x}]$ and so by Remark~\ref{rk:compatibility}, we have
\[
\psi_{\Si^2 K}(\beta)=\tilde{\psi}_Z(\alpha) \: ,
\]
which ends the proof.
\end{proof}

By part~(c) of Theorem~1.5 of \cite{Keller98}, the exact sequence
\[
\begin{tikzcd}
0 \arrow{r} & \pvd\!_{dg}\ol{\Ga} \arrow{r} & \per\!_{dg}\ol{\Ga} \arrow{r} & \cc_{dg} \arrow{r} & 0
\end{tikzcd}
\]
of dg categories yields a long exact sequence in cyclic homology
\[
\begin{tikzcd}
HC_{-d}(\per\!_{dg}\ol{\Ga}) \arrow{r} & HC_{-d}(\cc_{dg}) \arrow{r}{\delta} & HC_{-d-1}(\pvd\!_{dg}\ol{\Ga}) \arrow{r} & HC_{-d-1}(\per\!_{dg}\ol{\Ga}) \: .
\end{tikzcd}
\]
Since the dg category $\per\!_{dg}\ol{\Ga}$ is derived Morita equivalent to the underlying dg algebra of $\ol{\Ga}$, their cyclic homologies are isomorphic. Since the dg algebra $\ol{\Ga}$ is connective and $d$ is a positive integer, the cyclic homologies $HC_{-d}(\per\!_{dg}\ol{\Ga})$ and $HC_{-d-1}(\per\!_{dg}\ol{\Ga})$ vanish. Therefore, the connecting morphism $\delta \colon HC_{-d}(\cc_{dg})\to HC_{-d-1}(\pvd\!_{dg}\ol{\Ga})$ is an isomorphism. By Proposition~\ref{prop:detect non-degeneracy}, the preimage of the given right $d$-Calabi--Yau structure on $\cc_{dg}$ under the dual map $D\delta \colon DHC_{-d-1}(\pvd\!_{dg}\ol{\Ga})\to DHC_{-d}(\cc_{dg})$ gives rise to a right $(d+1)$-Calabi--Yau structure on $\pvd\!_{dg}\ol{\Ga}$. Since the dg category $\pvd\!_{dg}\ol{\Ga}$ is quasi-equivalent to $\per\!_{dg}(\ol{\Ga}^!)$ and the dg category $\per\!_{dg}(\ol{\Ga}^!)$ is derived Morita equivalent to the dg algebra $\ol{\Ga}^!$, we obtain a right $(d+1)$-Calabi--Yau structure on $\ol{\Ga}^!$. By Proposition~\ref{prop:left and right}, its preimage under the $k$-linear bijection
\[
\begin{tikzcd}
HN_{d+1}(\ol{\Ga})\arrow{r}{\sim} & \D HC_{-d-1}(\ol{\Ga}^!)=DHC_{-d-1}(\ol{\Ga}^!)
\end{tikzcd}
\]
is a left $(d+1)$-Calabi--Yau structure on $\ol{\Ga}$. Then by the implication from (1) to (2) in Theorem~9.4 (also holds if the Calabi--Yau dimension is $2$) of \cite{VandenBergh15}, the $l$-augmented pseudo-compact dg algebra $\ol{\Ga}$ is quasi-isomorphic to a $(d+1)$-dimensional deformed dg preprojective algebra $\Pi$. A choice of quasi-isomorphism between the two yields a triangle equivalence between their perfect derived categories inducing a triangle equivalence between their perfectly valued derived categories. Thus, we have a triangle equivalence from $\cc_\Pi$ to $\cc$ taking the canonical $d$-cluster-tilting object in $\cc_\Pi$ to $T$. This concludes the proof of the implication from ii) to i) in Theorem~\ref{thm:main}.

\section{The structure theorem in the relative case} \label{section:relative}

\subsection{Assumptions used in the structure theorem} \label{ss:assumptions}

Let $k$ be a field of characteristic $0$ and $d$ a positive integer. Let $\ce$ be a small $k$-linear Frobenius exact pseudo-compact category which is Krull--Schmidt. We denote its stable category by $\cc$ and assume that it is Karoubian and $\Hom$-finite.
Assume that $\ce$ contains a $d$-cluster-tilting object $M$ and denote the full subcategory $\add M\subseteq \ce$ by $\cm$. Let $P$ be a maximal projective-injective direct summand of $M$. It is an additive generator for the full subcategory $\cp\subseteq \ce$ of the projective-injective objects in $\ce$. We denote the pseudo-compact algebra $\End_\ce(M)$ by $\Ga$ and assume that it is of finite global dimension. When $d$ equals $2$, a sufficient condition for this is given in Proposition~I.1.11 of \cite{BuanIyamaReitenScott09}. By the implication from ii) to i) in Corollary~\ref{cor:smooth}, the pseudo-compact algebra $\Ga$ is smooth.
We assume that the radical quotient $l$ of $\Ga$ (and hence the radical quotient $l_F$ of $\End_\cp(P)$) is finite-dimensional. Thus, by Remark~\ref{rk:Wedderburn}, the pseudo-compact algebra $\Ga$ is complete $l$-augmented and $\End_\cp(P)$ is complete $l_F$-augmented. Moreover, the morphism $\End_\cp(P)\to \End_\ce(M)$ between pseudo-compact algebras given by the inclusion of $P$ as a direct summand of $M$ induces an injective morphism $\phi\colon l_F \to l$ between algebras. Denote the quotient algebra $l/l_F$ by $\ol{l}$. Assume that we are given
\begin{itemize}
\item[-] a small smooth pretriangulated pseudo-compact dg category $\tilde{\cp}$ such that
$H^0(\tilde{\cp})$ contains a silting object $\tilde{P}$ and
\item[-] a continuous dg functor $\tilde{\cp}\to \cc^b_{dg}(\cp)$ such that the induced functor $H^0(\tilde{\cp})\to \ch^b(\cp)$ is essentially surjective and takes $\tilde{P}$ to the additive generator $P$ of $\cp \subseteq \ch^b(\cp)$.
\end{itemize}
We denote the pseudo-compact dg algebra $\End_{\tilde{\cp}}(\tilde{P})$ by $\Ga_F$. We assume that the continuous dg functor $\tilde{\cp} \to \cc^b_{dg}(\cp)$ induces an algebra isomorphism from the radical quotient of $H^0(\Ga_F)$ to the radical quotient $l_F$ of $\End_\cp(P)$. Thus, by Remark~\ref{rk:Wedderburn}, after possibly replacing $\tilde{\cp}$ with a quasi-equivalent pseudo-compact dg category, we may and will assume that the pseudo-compact dg algebra $\Ga_F$ is complete $l_F$-augmented and concentrated in non-positive degrees. Moreover, the morphism $\Ga_F \to \Ga$ between pseudo-compact dg algebras induced by the composed continuous dg functor
\[
\begin{tikzcd}
\tilde{\cp} \arrow{r} & \cc^b_{dg}(\cp) \arrow{r} & \cc^b_{dg}(\ce)
\end{tikzcd}
\]
is then $\phi$-augmented in the sense of section~\ref{ss:morphisms between pc algebras}.
 
The dg singularity category $\cc_{dg}=\cd^b_{dg}(\ce)/\cd^b_{dg}(\cp)$ is a dg enhancement of $\cc$. Notice that we have an equivalence $\cc^b_{dg}(\cm) \iso \per\!_{dg}\Ga$ of pseudo-compact dg categories taking $M$ to $\Ga$.
Using this equivalence and the dg functor $\tilde{\cp} \to \cc^b_{dg}(\cp)$, from the diagram~(\ref{eq:Palu's diagram 1}), we obtain a commutative diagram
\begin{equation} \label{eq:Palu's diagram 2}
\begin{tikzcd}
 & & \tilde{\cp} \arrow[equal]{r}\arrow{d} & \tilde{\cp}\arrow{d} &  \\
0 \arrow{r} & \pvd\!_{dg}\ol{\Ga} \arrow{r}\arrow[equal]{d} & \per\!_{dg}\Ga \arrow{r}\arrow{d} & \cd^b_{dg}(\ce)\arrow{r}\arrow{d} & 0 \\
0 \arrow{r} & \pvd\!_{dg}\ol{\Ga} \arrow{r} & \per\!_{dg}\ol{\Ga} \arrow{r}\arrow{d} & \cc_{dg}\arrow{r}\arrow{d} & 0 \\
 & & 0 & 0 &
\end{tikzcd}
\end{equation}
of underlying dg categories with exact rows and columns. By part~a) of Proposition~\ref{prop:pseudo-compact dg categories}, the dg derived category $\cd^b_{dg}(\ce)$ of $\ce$ carries a natural structure of pseudo-compact dg category. Since the pseudo-compact dg categories $\per\!_{dg}\Ga$ and $\cd^b_{dg}(\ce)$ are derived Morita equivalent to the pseudo-compact dg algebras $\Ga$ respectively $\RHom_\ce^{pc}(M,M)$, by Lemma~\ref{lem:pseudo-compact localisation}, the continuous dg functor $\per\!_{dg}\Ga \to \cd^b_{dg}(\ce)$ is a pseudo-compact localisation. By the variant of part~(c) of Proposition~3.10 of \cite{Keller11b} for pseudo-compact dg categories, the pseudo-compact dg category $\cd^b_{dg}(\ce)$ is smooth. In the statement of the following theorem, we use that 
the canonical map 
\begin{equation} \label{eq:canonical map}
\begin{tikzcd}
HN_{d+1}(\per\!_{dg}\Ga,\tilde{\cp})\arrow{r} & HN_{d+1}(\cd^b_{dg}(\ce),\tilde{\cp})
\end{tikzcd}
\end{equation}
is bijective. We will show this in section~\ref{ss:the canonical map is bijective}.

\subsection{The structure theorem}

Recall that, by Corollary~3.5.4 of \cite{KellerLiu23}, a left Calabi--Yau structure yields a canonical graded symplectic form on the $(-1)$-shifted dual of the graded Yoneda algebra of any perfectly valued dg module.

\begin{theorem} \label{thm:relative}
Let $k$ be a field of characteristic $0$ and $d$ a positive integer. Let $\ce$ be a small $k$-linear Frobenius exact pseudo-compact category which is Krull--Schmidt with the full subcategory $\cp$ of projective-injective objects. Assume that its stable category $\cc$ is \linebreak $\Hom$-finite. Then the following are equivalent.
\begin{itemize}
\item[i)] There exists a $(d+1)$-dimensional Ginzburg--Lazaroiu morphism
\[
\gamma=\gamma_{d+1}(\phi,N,F,\eta,w,w_F)
\]
and a triangle equivalence $\cc_\gamma \iso \cd^b(\ce)$ which yields an equivalence $\ch_\gamma \iso \ce$ of exact categories inducing an equivalence $\add l_F \Pi \iso \cp$, where $\l_F$ is the source of $\phi$ and $\Pi$ the target of $\gamma$.
\item[ii)] The assumptions in section~\ref{ss:assumptions} hold (in particular, the exact category $\ce$ contains a $d$-cluster-tilting object $M$) and we keep those notations. The continuous dg functor $\tilde{\cp}\to \cd^b_{dg}(\ce)$ in the diagram (\ref{eq:Palu's diagram 2}) carries a left relative $(d+1)$-Calabi--Yau structure whose image under the composed map
\[
\begin{tikzcd}
HN_{d+1}(\cd^b_{dg}(\ce),\tilde{\cp}) & HN_{d+1}(\per\!_{dg}\Ga,\tilde{\cp})\arrow[swap]{l}{\sim}\arrow{dl} & \\
HN_{d+1}(\per\!_{dg}\ol{\Ga})\arrow{r} & DHC_{-d-1}(\pvd\!_{dg}\ol{\Ga})\arrow{r}{D\delta} & DHC_{-d}(\cc_{dg})
\end{tikzcd}
\]
gives rise to a right $d$-Calabi--Yau structure on the dg category $\cc_{dg}$. Moreover, the kernel of the induced map 
\[
\begin{tikzcd}
\Si^{-1}\D\Ext^*_{\Ga_F}(l_F,l_F) \arrow{r} & \Si^{-1}\D\Ext^*_\Ga(l,l)
\end{tikzcd}
\]
is a Lagrangian homogeneous subspace concentrated in degrees less than or equal to $\frac{2-d}{2}$.
\end{itemize}
\end{theorem}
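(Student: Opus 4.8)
The plan is to prove the two implications separately, each time along the lines of the proof of Theorem~\ref{thm:main} but carrying the frozen data through every step. A common preliminary, already needed to make sense of part~ii), is that the canonical map~(\ref{eq:canonical map}) in relative negative cyclic homology is bijective; I would prove this under the assumptions of section~\ref{ss:assumptions} by first showing that the pseudo-compact derived endomorphism algebra of the $d$-cluster-tilting object $M$ is complete augmented, so that its Koszul dual is available, and then comparing the long exact sequences in relative negative cyclic homology attached to the exact columns of diagram~(\ref{eq:Palu's diagram 2}).

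For the implication from i) to ii), one takes $M=\Pi$, the canonical $d$-cluster-tilting object of $\ce\simeq\ch_\gamma$ (its existence descending from that in the stable category, which is the cluster category associated with $\Pi/(l_F)$), and checks that $\End_\ce(M)\simeq H^0(\Pi)$ has finite global dimension and finite-dimensional radical quotient $l=\ol{l}\times l_F$, so that the assumptions of section~\ref{ss:assumptions} hold with $\tilde{\cp}=\per\!_{dg}\Pi_F$ and the tautological continuous dg functor $\tilde{\cp}\to\cc^b_{dg}(\cp)$. Keller--Liu's theorem \cite{KellerLiu23} equips the Ginzburg--Lazaroiu morphism $\gamma\colon\Pi_F\to\Pi$ with a left relative $(d+1)$-Calabi--Yau structure $[\tilde{\xi}]$, which transports along the derived-Morita equivalences to such a structure on $\per\!_{dg}\Pi_F\to\per\!_{dg}\Pi$. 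Since $\per\!_{dg}\Pi\to\cd^b_{dg}(\ch_\gamma)$ is a pseudo-compact localisation (Lemma~\ref{lem:pseudo-compact localisation}), the pseudo-compact variant of part~(c) of Proposition~3.10 of \cite{Keller11b} pushes $[\tilde{\xi}]$ forward to a left relative $(d+1)$-Calabi--Yau structure on $F$, and the pseudo-compact variant of Corollary~7.1 of \cite{BravDyckerhoff19} produces a left $(d+1)$-Calabi--Yau structure on the homotopy cofibre of $\per\!_{dg}\Pi_F\to\per\!_{dg}\Pi$; by the construction in the proof of the implication from i) to ii) of Theorem~\ref{thm:main}, this yields the desired right $d$-Calabi--Yau structure on $\cc_{dg}$. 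Finally, the assertion that the kernel of $\Si^{-1}\D\Ext^*_{\Ga_F}(l_F,l_F)\to\Si^{-1}\D\Ext^*_\Ga(l,l)$ is a Lagrangian homogeneous subspace concentrated in degrees $\leq\frac{2-d}{2}$ is read off, through Koszul duality, from the degree range $[\frac{3-d}{2},0]$ imposed on $F$ and from the non-degeneracy of $\eta_F$ on $F\oplus R$ with $R=\Si^{d-3}DF$, which is exactly what the Assumptions defining $\gamma$ encode.

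For the harder implication from ii) to i), I would follow section~\ref{ss:proof from ii) to i) in main}: using Palu's diagram~(\ref{eq:Palu's diagram 2}), build a continuous dg functor $\tilde{F}\colon\tilde{\cp}\to\per\!_{dg}\Ga$ through which $F$ factors after the pseudo-compact localisation $\per\!_{dg}\Ga\to\cd^b_{dg}(\ce)$, the aim being to endow $\tilde{F}$ with a left relative $(d+1)$-Calabi--Yau structure and then invoke Keller--Liu's structure theorem (Theorem~4.3.1 of \cite{KellerLiu23}). Bijectivity of~(\ref{eq:canonical map}) transports the given data to a candidate class $[\tilde{\xi}]$ in $HN_{d+1}(\per\!_{dg}\Ga,\tilde{\cp})$; to verify its non-degeneracy I would pass to the induced morphism of pseudo-compact dg $\per\!_{dg}\Ga$-bimodules and apply $L\lten_{\per\!_{dg}\Ga}(-)\lten_{\per\!_{dg}\Ga}M$ for arbitrary right modules $L$ and left modules $M$. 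By the semi-orthogonal decomposition of $\cd(\per\!_{dg}\Ga)$ afforded by the exact columns of~(\ref{eq:Palu's diagram 2}), only two cases remain: a) $L$ and $M$ are restricted from $\cd^b_{dg}(\ce)$, where the resulting map is an isomorphism precisely because of the given left relative $(d+1)$-Calabi--Yau structure on $F$; and b) $L$ or $M$ is induced from $\pvd\!_{dg}\ol{\Ga}$, in which case the induced bimodule vanishes on $\tilde{\cp}$ and the relative check collapses to the absolute one already settled in Proposition~\ref{prop:detect non-degeneracy}. The Lagrangian hypothesis on the kernel of $\Si^{-1}\D\Ext^*_{\Ga_F}(l_F,l_F)\to\Si^{-1}\D\Ext^*_\Ga(l,l)$ is what makes the output of Theorem~4.3.1 of \cite{KellerLiu23} genuinely a Ginzburg--Lazaroiu morphism $\gamma$, with source $\Ga_F\simeq\Pi_F$ and target $\Ga\simeq\Pi$; a choice of quasi-isomorphism then induces a triangle equivalence $\cc_\gamma\iso\cd^b(\ce)$, and a final inspection of how it acts on the projective-injective objects and on the $d$-cluster-tilting objects upgrades it to the asserted equivalence $\ch_\gamma\iso\ce$ of exact categories restricting to $\add l_F\Pi\iso\cp$.

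The main obstacle, as in the absolute case, is the non-degeneracy check in case~a): one must show that the relative symplectic form obtained by transporting the given right $d$-Calabi--Yau structure on $\cc_{dg}$ back through the Koszul dual and the relative cyclic-homology long exact sequences agrees, on the relevant Yoneda algebras, with the one coming from the given left relative Calabi--Yau structure on $F$. This rests on the compatibility of the connecting morphism in relative Hochschild homology with Amiot's construction, \ie on Theorem~\ref{thm:connecting morphism} and Remark~\ref{rk:compatibility}, now applied in the relative framework. A secondary difficulty is the bookkeeping of the degree concentration of $F$ and $R$ needed to match the Assumptions defining Ginzburg--Lazaroiu morphisms, which is exactly what the Lagrangian condition in part~ii) secures.
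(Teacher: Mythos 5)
Your proposal is correct and follows essentially the same route as the paper: the bijectivity of the canonical map via the complete augmented structure on $\RHom_\ce(M,M)$ and the Koszul-dual long exact sequence, the transport of Keller--Liu's structure through the pseudo-compact localisation and the variant of Corollary~7.1 of \cite{BravDyckerhoff19} for i)$\Rightarrow$ii), and the $L\lten(-)\lten M$ reduction along the semi-orthogonal decomposition with the two cases a) and b) for ii)$\Rightarrow$i). One small remark: in your closing paragraph you identify case~a) as the step requiring the compatibility of the connecting morphism with Amiot's construction, whereas in fact case~a) follows directly from the non-degeneracy of $[\xi]$ and the localisation property, and it is case~b) that collapses to the absolute comparison of linear forms in Proposition~\ref{prop:detect non-degeneracy} (where Theorem~\ref{thm:connecting morphism} is used) --- as your own description of the two cases earlier in the proposal correctly states.
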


\begin{remark}
Thus, if statement ii) of the theorem holds, then the endomorphism algebra of $M$ is isomorphic to $H^0(\Pi)$.
\end{remark}

\subsection{Bijectivity of the canonical map (\ref{eq:canonical map})} \label{ss:the canonical map is bijective}

\begin{lemma} \label{lem:exact sequence of Koszul duals}
The sequence
\[
\begin{tikzcd}
0\arrow{r} & \pvd\!_{dg}(\cd^b_{dg}(\ce)) \arrow{r} & \pvd\!_{dg}(\per\!_{dg}\Ga) \arrow{r} & \pvd\!_{dg}(\pvd\!_{dg}\ol{\Ga}) \arrow{r} & 0
\end{tikzcd}
\]
of dg categories induced by the exact sequence in the middle row of the diagram (\ref{eq:Palu's diagram 2}) is exact.
\end{lemma}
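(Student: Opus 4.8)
The plan is to recognise the claimed sequence as the ``Koszul dual'' of the middle row of diagram~(\ref{eq:Palu's diagram 2}), obtained from the associated recollement by passing to right orthogonals, and then to verify that this dual sequence restricts to the subcategories of perfectly valued modules. Write the middle row as $0 \to \cb \xrightarrow{i} \ca \xrightarrow{q} \cc \to 0$, where $\cb = \pvd\!_{dg}\ol{\Ga}$ is a full dg subcategory of $\ca = \per\!_{dg}\Ga$ and $\cc = \cd^b_{dg}(\ce)$. Exactness means that $\cd(\cb) \to \cd(\ca)$ is fully faithful onto a localising subcategory and that $\cd(\cc) = \cd(\ca)/\cd(\cb)$; restriction along $q$ then identifies $\cd(\cc)$ with the right orthogonal $\cd(\cb)^{\perp}$ in $\cd(\ca)$, which in turn equals the kernel of the restriction functor $i^*\colon \cd(\ca) \to \cd(\cb)$ along $i$, and $i^*$ is a Verdier quotient with exactly this kernel. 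Thus the middle row induces a ``dual'' exact sequence $0 \to \cd(\cd^b_{dg}(\ce)) \xrightarrow{q^*} \cd(\per\!_{dg}\Ga) \xrightarrow{i^*} \cd(\pvd\!_{dg}\ol{\Ga}) \to 0$, and we must show it restricts to the full subcategories of perfectly valued objects.

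The formal points are easy. Each of $\ca$, $\cb$, $\cc$ has a compact generator---the images of $\Ga$, of the direct sum of the finitely many simple $\ol{\Ga}$-modules (finitely many since $\ol{\Ga}$ is complete with finite-dimensional semisimple radical quotient $l$), and of $M$, respectively---so for each of them a dg module is perfectly valued if and only if its value at every object lies in $\per k$. Consequently $q^*$ and $i^*$ preserve perfectly valued modules (using that $q$ is essentially surjective and $i$ a full embedding), $q^*$ remains fully faithful on perfectly valued modules, and the image of $q^*\colon \pvd\!_{dg}(\cd^b_{dg}(\ce)) \to \pvd\!_{dg}(\per\!_{dg}\Ga)$ is precisely the kernel of $i^*$ restricted to $\pvd\!_{dg}(\per\!_{dg}\Ga)$ (again by essential surjectivity of $q$). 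It therefore remains to show that $i^*\colon \pvd\!_{dg}(\per\!_{dg}\Ga) \to \pvd\!_{dg}(\pvd\!_{dg}\ol{\Ga})$ is essentially surjective up to direct summands.

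This is where the smoothness hypotheses enter. Since $\Ga$ has finite global dimension, $\per\!_{dg}\Ga$ is smooth, so by Corollary~\ref{cor:smooth} every perfectly valued $\per\!_{dg}\Ga$-module is perfect; hence $\pvd\!_{dg}(\per\!_{dg}\Ga)$ is the thick subcategory generated by the finitely many simple $\Ga$-modules, and it suffices to check that their images under $i^*$ thickly generate $\pvd\!_{dg}(\pvd\!_{dg}\ol{\Ga})$. On the target side, since $\ol{\Ga}$ is smooth we have $\pvd\!_{dg}\ol{\Ga} \subseteq \per\!_{dg}\ol{\Ga}$ by Corollary~\ref{cor:smooth}; combining this with the realisation of $\pvd\!_{dg}\ol{\Ga}$ as the category $\ch^b_{ac}(\cm)$ of bounded $\ce$-acyclic complexes inside $\per \cm = \per\!_{dg}\Ga$, and with the finiteness of $\Ext^*_\ce$ on $\cm$, shows that the objects of the dg category $\pvd\!_{dg}\ol{\Ga}$ are themselves perfectly valued $\Ga$-modules, so that $i^*$ sends their representable $\per\!_{dg}\Ga$-modules to the representable $\pvd\!_{dg}\ol{\Ga}$-modules. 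Hence the essential image of $i^*$ on perfectly valued modules is a thick subcategory containing both the perfect $\pvd\!_{dg}\ol{\Ga}$-modules and the images of the simple $\Ga$-modules; using the canonical $t$-structure on $\cd(\ol{\Ga}^!)$---where $\ol{\Ga}^!$ is the Koszul dual of $\ol{\Ga}$, so that $\pvd\!_{dg}\ol{\Ga} \simeq \per\!_{dg}\ol{\Ga}^!$ and $\pvd\!_{dg}(\pvd\!_{dg}\ol{\Ga}) \simeq \pvd\!_{dg}\ol{\Ga}^!$---together with the boundedness forced by the finite global dimension of $\Ga$, one checks that this thick subcategory is all of $\pvd\!_{dg}(\pvd\!_{dg}\ol{\Ga})$.

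The main obstacle is precisely this essential-surjectivity step: the restriction $i^*$ only ``sees'' a module on $\ce$-acyclic complexes, and one has to argue that it nevertheless produces every perfectly valued $\ol{\Ga}^!$-module; everything else is a formal consequence of the recollement and of the existence of compact generators.
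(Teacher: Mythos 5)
Your formal setup agrees with the paper's (the recollement coming from the middle row of (\ref{eq:Palu's diagram 2}), the identification of the kernel, and the reduction to the behaviour of the restriction functor $R\colon \cd(\per\!_{dg}\Ga)\to\cd(\pvd\!_{dg}\ol{\Ga})$), but the point where you depart from the paper is exactly where the argument breaks down. First, reducing to ``$R$ is essentially surjective up to direct summands on perfectly valued modules'' is not enough: exactness of the displayed sequence means that the induced functor $\pvd(\per\!_{dg}\Ga)/\pvd(\cd^b_{dg}(\ce))\to\pvd(\pvd\!_{dg}\ol{\Ga})$ is an \emph{equivalence}, and its full faithfulness requires that morphisms between perfectly valued objects in the quotient be representable by roofs whose vertex is again perfectly valued; you never address this. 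Second, your surjectivity argument treats the essential image of $R$ restricted to $\pvd(\per\!_{dg}\Ga)$ as a thick subcategory, but the image of a triangle functor restricted to a subcategory is closed under shifts and isomorphisms, not in general under cones or summands (a cone of $RX\to RY$ is $R$ applied to a cone formed on a roof vertex that need not be perfectly valued), so ``contains a generating set'' does not yield surjectivity. Third, even granting thickness, your concluding generation claim is only asserted: the target is $\pvd(\ol{\Ga}^!)$, where $\ol{\Ga}^!$ is coconnective and, since $\ol{\Ga}$ is not proper in general, not smooth; thus $\per(\ol{\Ga}^!)$ is a proper subcategory of $\pvd(\ol{\Ga}^!)$ and there is no canonical $t$-structure d\'evissage of perfectly valued $\ol{\Ga}^!$-modules into simples available to finish the argument.

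The paper closes all three gaps with a single observation that is missing from your proposal: the restriction $R$ along $\pvd\!_{dg}\ol{\Ga}\hookrightarrow\per\!_{dg}\Ga$ has a fully faithful \emph{left adjoint} $L$ (induction), and $L$ takes $\pvd(\pvd\!_{dg}\ol{\Ga})$ into $\pvd(\per\!_{dg}\Ga)$, because $X\iso RLX$ and $\per\Ga$ has a single generator. Hence $(L,R)$ restricts to an adjunction at the perfectly valued level with $L$ still fully faithful, so the restricted $R$ is automatically a localisation functor, and its kernel is $\pvd(\cd^b_{dg}(\ce))$ by exactness of the middle row. To repair your argument you should replace the generation strategy by this adjoint-functor argument, i.e.\ exhibit for each $Y\in\pvd(\pvd\!_{dg}\ol{\Ga})$ an explicit perfectly valued preimage together with the compatibility of roofs --- which is precisely what $LY$ provides.
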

\begin{proof}
The exact sequence in the middle row of the diagram (\ref{eq:Palu's diagram 2}) yields an exact sequence
\[
\begin{tikzcd}
0\arrow{r} & \cd(\cd^b_{dg}(\ce)) \arrow{r} & \cd(\per\!_{dg}\Ga) \arrow{r} & \cd(\pvd\!_{dg}\ol{\Ga}) \arrow{r} & 0
\end{tikzcd}
\]
of derived categories and restriction functors, where the localisation functor
\[
R\colon \cd(\per\!_{dg}\Ga)\longrightarrow \cd(\pvd\!_{dg}\ol{\Ga})
\]
has a fully faithful left adjoint $L$. We claim that the functor $L$ takes $\pvd(\pvd\!_{dg}\ol{\Ga})$ to $\pvd(\per\!_{dg}\Ga)$. Indeed, for any object
$X$ in $\pvd(\pvd\!_{dg}\ol{\Ga})$, we have $X \iso (R\circ L)X$ and this implies that the dg $\per\!_{dg}\Ga$-module $LX$ is perfectly valued because the triangulated category $\per\Ga$ has a single generator. Thus, the adjoint pair $(L,R)$ induces an adjoint pair between $\pvd(\pvd\!_{dg}\ol{\Ga})$ and $\pvd(\per\!_{dg}\Ga)$ so that the restriction of $R$ to $\pvd(\pvd\!_{dg}\ol{\Ga})$ is indeed a localisation functor. Since the middle row of the diagram (\ref{eq:Palu's diagram 2}) is exact, the kernel of the localisation functor induced by $R$ is $\pvd(\cd^b_{dg}(\ce))$. Therefore, the sequence
\[
\begin{tikzcd}
0\arrow{r} & \pvd(\cd^b_{dg}(\ce))\arrow{r} & \pvd(\per\!_{dg}\Ga)\arrow{r} & \pvd(\pvd\!_{dg}\ol{\Ga})\arrow{r} & 0
\end{tikzcd}
\]
of triangulated categories is exact and so is its dg enhancement.
\end{proof}

\begin{lemma} \label{lem:projective cover}
Any object $X$ in $\ce$ has a projective cover $P_X \to X$ which is a deflation.
\end{lemma}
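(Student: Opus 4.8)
The statement asserts that every object $X$ in $\ce$ has a projective cover $P_X\to X$ which is a deflation. Let me sketch how I would prove this.

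\textbf{Plan.} The plan is to first produce \emph{some} deflation onto $X$ with projective source, then cut it down to a projective cover using the Krull--Schmidt property. Since $\ce$ has enough projective objects, there is by definition a deflation $\pi\colon P\to X$ with $P$ projective. Now I want to extract a projective cover from $\pi$. Because $\ce$ is a Krull--Schmidt $k$-linear exact pseudo-compact category, the object $P$ decomposes as a finite direct sum of indecomposables with local endomorphism rings, and one can choose a direct summand $P_X\subseteq P$ minimal with the property that the restriction $\pi|_{P_X}\colon P_X\to X$ is still a deflation. The key points to verify are then: (i) this restriction is again a deflation; (ii) it is right minimal, i.e.\ any endomorphism $g$ of $P_X$ with $(\pi|_{P_X})\circ g=\pi|_{P_X}$ is an automorphism; and (iii) right minimality together with surjectivity (deflation) plus projectivity of $P_X$ gives that $P_X\to X$ is a projective cover in the sense that $\ker$ lies inside the radical, equivalently that $P_X\to X$ is a superfluous (essential) epimorphism.

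\textbf{Key steps in order.} First, using that $\ce$ is Krull--Schmidt, write $P=\bigoplus_{i=1}^n Q_i$ with each $Q_i$ indecomposable with local endomorphism ring, and among all subsets $S\subseteq\{1,\dots,n\}$ such that $\pi$ restricted to $\bigoplus_{i\in S}Q_i$ is a deflation, pick one of minimal cardinality; call the resulting object $P_X$ and the restriction $p\colon P_X\to X$. (The full set $S=\{1,\dots,n\}$ works, so the family is non-empty.) Second, check that $p$ is a deflation --- this is immediate since we chose $S$ so that it is. Third, prove right minimality: suppose $g\in\End_\ce(P_X)$ satisfies $p\circ g=p$; decompose $g$ as a matrix $(g_{ij})$ with respect to the decomposition $P_X=\bigoplus_{i\in S}Q_i$; if $g$ were not an automorphism, then by Krull--Schmidt / Fitting's lemma some diagonal entry $g_{ii}$ lies in the radical of the local ring $\End_\ce(Q_i)$, and one deduces that $\bigoplus_{j\in S,\,j\neq i}Q_j$ already surjects onto $X$ via a deflation (obtained by composing $p$ with a suitable automorphism of $P_X$ coming from $g$), contradicting minimality of $|S|$. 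Hence $g$ is an automorphism. Fourth, conclude: a right-minimal deflation from a projective object is a projective cover --- indeed if $K=\ker p$ (a subobject since $p$ is a deflation, using the exact structure) did not lie in $P_X\cdot\rad$, one could peel off a direct summand and contradict right minimality again; more directly, right minimality of $p$ together with projectivity of $P_X$ is precisely the statement that $p$ is a projective cover.

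\textbf{Main obstacle.} The routine part is the existence of a deflation with projective source (this is the hypothesis that $\ce$ has enough projectives). The real content --- and the step I expect to be the main obstacle --- is the minimality argument, i.e.\ showing that one can shrink $P\to X$ to a \emph{right minimal} deflation while staying within the class of deflations. The subtlety is that exact-category deflations are not merely epimorphisms, so one must check that restricting a deflation to a suitably chosen direct summand, and pre-composing with automorphisms, preserves the property of being a deflation; here one uses that in an exact category the composition of a deflation with a split epimorphism (projection onto a summand that still hits everything) need not a priori be a deflation, so the choice of $S$ has to be made directly in terms of the deflation property, and the Krull--Schmidt decomposition is what makes the minimal such $S$ well-behaved. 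Once right minimality is in hand, the identification with "projective cover" is formal.
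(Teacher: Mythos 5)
Your overall strategy (extract a right minimal deflation from some deflation $P\to X$ with $P$ projective, using the Krull--Schmidt property) is viable, and it is essentially a hands-on version of what the paper does more indirectly: the paper transports the problem along $\ce(P',?)$ to finitely generated modules over $\End_\ce(P')$, invokes Corollary~4.4 of Krause's paper to see that this ring is semi-perfect, takes the projective cover there, and pulls it back through the equivalence $\add P'\simeq \proj\End_\ce(P')$; the final claim that the resulting morphism is a deflation is then the observation that a direct summand of a deflation is a deflation (which, as you correctly sense, rests on idempotent completeness of the Krull--Schmidt category $\ce$ and the redundancy of Quillen's obscure axiom).

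However, your right-minimality step contains a genuine error. It is not true that an endomorphism $g=(g_{ij})$ of $P_X=\bigoplus_{i\in S}Q_i$ which fails to be an automorphism must have some diagonal entry $g_{ii}$ in $\rad\End_\ce(Q_i)$: already for $Q_1=Q_2=Q$ with $\End_\ce(Q)=k$, the matrix $\begin{pmatrix}1&1\\1&1\end{pmatrix}$ is not invertible although both diagonal entries are units. Invertibility of $g$ is governed by its image in the semisimple quotient $\End_\ce(P_X)/\rad\End_\ce(P_X)$, which is a product of matrix rings over division rings and does not localise to diagonal entries. The correct link between ``not right minimal'' and the decomposition is that $p$ fails to be right minimal if and only if $P_X$ admits a nonzero direct summand on which $p$ vanishes; extracting such a summand from a non-invertible $g$ satisfying $pg=p$ requires lifting idempotents in $\End_\ce(P_X)$, which is precisely the semi-perfectness that the paper imports from Krause. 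A second, smaller defect: the summand to be split off need not be of the form $\bigoplus_{i\in S'}Q_i$ for a subset $S'\subseteq S$ of your fixed decomposition, so minimality of $|S|$ over such subsets is not directly contradicted; you should instead minimise the number of indecomposable summands over all projective objects admitting a deflation onto $X$. With these repairs your route goes through, but as written the crucial step would fail.
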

\begin{proof}
For any object $X$ in $\ce$, since the category $\ce$ has enough projective objects, there is a deflation $P'\twoheadrightarrow X$ for some projective object $P'$. So we have a surjective morphism $\ce(P',P')\to \ce(P',X)$ of $\End_\ce(P')$-modules, which shows that $\ce(P',X)$ is a finitely generated $\End_\ce(P')$-module. Since $\ce$ is Krull--Schmidt, by Corollary~4.4 of \cite{Krause15}, the algebra $\End_\ce(P')$ is semi-perfect. This implies that $\ce(P',X)$ has a projective cover as an \linebreak $\End_\ce(P')$-module, which is of the form $\ce(P',P_X)\to \ce(P',X)$. Then the corresponding morphism $P_X\to X$ in $\ce$ is a projective cover of $X$. It is a deflation since it is a direct summand of the deflation $P'\twoheadrightarrow X$.
\end{proof}

\begin{lemma} \label{lem:augmented}
The pseudo-compact dg algebra $\RHom_\ce(M,M)$ is complete $l$-augmented.
\end{lemma}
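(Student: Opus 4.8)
The plan is to exhibit a complete $l$-augmented pseudo-compact dg algebra quasi-isomorphic to $A:=\RHom^{pc}_\ce(M,M)$; this is all that is needed in the sequel, where the lemma serves only to legitimise the passage to the Koszul dual of $A$.

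First I would fix the homological position of $A$. As $\ce$ is an exact category, $H^i(A)=\Ext^i_\ce(M,M)$ vanishes for $i<0$ and $H^0(A)=\End_\ce(M)=\Ga$; since $M$ is $d$-cluster-tilting we moreover have $H^i(A)=0$ for $1\le i\le d-1$, so the cohomology of $A$ is concentrated in $\{0\}\cup\{d,d+1,\dots\}$. By the assumptions of section~\ref{ss:assumptions} (via Remark~\ref{rk:Wedderburn}) the algebra $\Ga$ is complete $l$-augmented, with $l$ finite-dimensional and separable; fix a unit $\eta_\Ga\colon l\to\Ga$ and an augmentation $\eps_\Ga\colon\Ga\to l$.

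The main step is to replace $A$ by a model concentrated in non-negative degrees. Using Lemma~\ref{lem:projective cover} I would choose a \emph{minimal} projective resolution $\bp M\colon \dots\to P_1\to P_0\to 0$ of $M$ in $\ce$ (each structure map a projective cover, so each differential has image in the radical), which by part~a) of Proposition~\ref{prop:pseudo-compact dg categories} realises $A$ as $E=\cc_{dg}(\cp)(\bp M,\bp M)$, and then pass from $E$, by the minimal-model machinery in the pseudo-compact setting used in the proof of Proposition~\ref{prop:minimal cofibrant replacement}, to a pseudo-compact dg algebra $N$ quasi-isomorphic to $A$ which is concentrated in non-negative degrees and satisfies $N^0=\Ga$. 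On such an $N$ the augmented structure is automatic: $N^{\ge1}:=\bigoplus_{i\ge1}N^i$ is a dg ideal (there are no negative degrees), the quotient $N/N^{\ge1}$ is $\Ga$, hence $\eps:=\eps_\Ga\circ(N\twoheadrightarrow\Ga)$ is a continuous augmentation, and $l\xrightarrow{\eta_\Ga}\Ga=N^0\hookrightarrow N$ is a section of it, so $N$ is $l$-augmented with $\ol N=\ol\Ga\oplus N^{\ge1}$. For completeness I would check $\ol N=\rad N$ as follows: since $N$ is non-negatively graded, any simple pseudo-compact dg $N$-module is concentrated in a single degree — if $p$ is the lowest degree in which it is non-zero, then $\bigoplus_{i>p}S^i$ is a proper dg submodule, so it vanishes — hence is, up to shift, a simple pseudo-compact $\Ga$-module on which $N^{\ge1}$ acts by zero; therefore the common annihilator of all simple pseudo-compact dg $N$-modules equals $\rad\Ga\oplus N^{\ge1}$, which is $\ol N$ because $\Ga$ is complete. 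Thus $N$, and a fortiori $A$, is complete $l$-augmented.

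The step I expect to require real work is the construction of the non-negatively graded model $N$ with $N^0=\Ga$, i.e.\ the upgrade of the minimal $A_\infty$-model of $A$ — whose underlying graded algebra is $H^*(A)$, concentrated in non-negative degrees with degree-$0$ part $\Ga$ — to an honest pseudo-compact dg algebra without introducing negative degrees. This is a Postnikov-type argument, built up degree by degree starting from $N^0=\Ga$, in which the vanishing $H^i(A)=0$ for $1\le i\le d-1$ is used both to see that no correction terms are needed in the lowest degrees and, more importantly, to control which higher operations can contribute in degree $0$. Granting such an $N$, the remaining assertions — the existence of the augmentation and the equality $\rad N=\ol N$ — are formal consequences of the non-negative gradation together with the Wedderburn–Mal'cev theorem.
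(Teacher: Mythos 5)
Your overall architecture is close to the paper's --- minimal projective resolution via Lemma~\ref{lem:projective cover}, reduction to the completeness of $\Ga$, and the observation that simple dg modules over something ``non-negatively concentrated'' live in a single degree --- but the whole argument hinges on a step you do not carry out: the construction of a strict pseudo-compact dg algebra model $N$ of $\RHom_\ce(M,M)$ concentrated in non-negative degrees with $N^0=\Ga$ on the nose. This is not delivered by the machinery of Proposition~\ref{prop:minimal cofibrant replacement}, which produces a minimal $A_\infty$-model (whose underlying graded algebra is indeed $H^*(A)$, non-negatively graded) rather than a strict dg algebra; strictifying such an $A_\infty$-model, whether by bar--cobar or by adjoining generators to kill the higher products, generically reintroduces negative degrees, and neither brutal truncation of $\End_{\cc_{dg}(\ce)}(\bp M)$ (which has components in all degrees) is a sub- or quotient dg algebra with the correct cohomology. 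You yourself flag this as ``the step requiring real work'' and treat everything after it as formal, so the gap sits exactly at the load-bearing point; note also that the vanishing of $\Ext^i_\ce(M,M)$ for $1\le i\le d-1$, which you invoke to control the Postnikov argument, is empty when $d=1$, a case the lemma must cover.

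The paper's proof shows that this detour is unnecessary. It works directly with $A=\End_{\cc_{dg}(\ce)}(\bp M)$, which is \emph{not} non-negatively graded. The augmentation comes from minimality of the resolution: reducing modulo $\rad\cp$ kills the differential, and projecting onto the component at $P_M^0$ (a projective cover of $M$) lands in $\End_\ce(P_M^0)/\rad\End_\ce(P_M^0)\simeq \Ga/\rad\Ga=l$; Remark~\ref{rk:Wedderburn} then supplies the section. For completeness, one uses the dg ideal $I\subseteq A$ of endomorphisms whose components lie in $\rad\ce$: since $A/I$ has vanishing differential, every simple dg $A$-module becomes a graded $H^*(A)$-module, and the non-negativity of the \emph{cohomology} (not of the underlying graded algebra) forces it into a single degree, hence it is a simple $\Ga$-module and a direct summand of $l$. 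To salvage your approach you would either have to actually prove the existence of the coconnective strict model --- which is non-trivial and not needed --- or replace ``non-negative gradation of $N$'' by ``non-negative gradation of $H^*(A)$'' along the lines above.
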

\begin{proof}
By Lemma~\ref{lem:projective cover}, there exists a minimal projective resolution $\bp M$ of $M$. Denote its component of degree $-p$ by $P_M^p$ and the pseudo-compact dg algebra $\End_{\cc_{dg}(\ce)}(\bp M)$ by $A$. By construction, the differential of $\End_{\cc_{dg}(\cp/\rad \cp)}(\bp M)$ vanishes. So we have the canonical composed morphism
\[		
\End_{\cc_{dg}(\ce)}(\bp M)\longrightarrow \End_{\cc_{dg}(\cp/\rad \cp)}(\bp M)\longrightarrow \End_{\cc(\cp/\rad \cp)}(\bp M)\longrightarrow \End_{\cp/\rad \cp}(P_M^0)
\]
of pseudo-compact dg algebras. It fits into a commutative square of pseudo-compact dg algebras
\[
\begin{tikzcd}
\End_\ce(M)\arrow{r}\arrow{d} & \End_{\cc_{dg}(\ce)}(\bp M)\arrow{d} \\
\End_\ce(M)/\rad \End_\ce(M) \arrow{r} & \End_\ce(P_M^0)/\rad \End_\ce(P_M^0)\mathrlap{\: .}
\end{tikzcd}
\]
Since $P_M^0 \to M$ is a projective cover, the bottom morphism is an isomorphism. By Remark~\ref{rk:Wedderburn}, the left vertical morphism of pseudo-compact algebras admits a section. Therefore, the right vertical morphism makes $A$ into an $l$-augmented pseudo-compact dg algebra. Now we show it is complete. On the one hand, since $A$ is $l$-augmented, the dg $A$-module $l$ is semisimple. On the other hand, denote the subcomplex
\[
\prod_{n\in \Z}\{f\in A^n \mid f^p\in (\rad\ce)(P_M^p,P_M^{p+n})\}
\]
of $A$ by $I$. By construction, it is a dg ideal of $A$ and contained in $\rad A$. So any simple dg \linebreak $A$-module $S$ is annihilated by it and hence becomes a cyclic dg $A/I$-module. By construction, the differential of $A/I$ vanishes and therefore, so does the differential of $S$. So the dg $A$-module $S$ becomes a simple graded $H^*(A)$-module. Let $s$ be a nonzero homogeneous element of $S$.  Since $H^*(A)$ is concentrated in non-negative degrees, the subset $s\cdot H^{>0}(A)\subseteq S$ is a graded $H^*(A)$-submodule which does not contain the element $s$. Since $S$ is simple, this implies that $s\cdot H^{>0}(A)$ is zero and hence $S=s\cdot H^0(A)$ is concentrated in a single degree. So $S$ becomes a simple $H^0(A)=\Ga$-module. Since $\Ga$ is complete $l$-augmented, we deduce that the simple dg $A$-module $S$ is a direct summand of $l$. This ends the proof.
\end{proof}

\begin{proposition} \label{prop:bijectivity}
The canonical map (\ref{eq:canonical map}) is bijective.
\end{proposition}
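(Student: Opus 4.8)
The plan is to read off a long exact sequence controlling the map~(\ref{eq:canonical map}) from the middle row of Palu's diagram~(\ref{eq:Palu's diagram 2}), and then to kill the two resulting error terms using nothing more than the connectivity of $\ol\Ga$.

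\textbf{Step 1: reduction to a vanishing statement.} The middle row of~(\ref{eq:Palu's diagram 2}) is an exact sequence of dg categories
\[
0\longrightarrow\pvd\!_{dg}\ol\Ga\longrightarrow\per\!_{dg}\Ga\longrightarrow\cd^b_{dg}(\ce)\longrightarrow 0\,,
\]
so by part~(c) of Theorem~1.5 of \cite{Keller98} the associated mixed complexes sit in a triangle $M(\pvd\!_{dg}\ol\Ga)\to M(\per\!_{dg}\Ga)\to M(\cd^b_{dg}(\ce))\to\Si M(\pvd\!_{dg}\ol\Ga)$; in particular $\cone\bigl(M(\per\!_{dg}\Ga)\to M(\cd^b_{dg}(\ce))\bigr)\simeq\Si M(\pvd\!_{dg}\ol\Ga)$. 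By the commutativity of the upper part of~(\ref{eq:Palu's diagram 2}), the relative mixed complexes $M(\per\!_{dg}\Ga,\tilde\cp)$ and $M(\cd^b_{dg}(\ce),\tilde\cp)$ are the cones of $M(\tilde\cp)\to M(\per\!_{dg}\Ga)$ and $M(\tilde\cp)\to M(\cd^b_{dg}(\ce))$, and the canonical map between them is the one induced by $M(\per\!_{dg}\Ga)\to M(\cd^b_{dg}(\ce))$ over the identity of $M(\tilde\cp)$. Applying the octahedral axiom to the factorisation $M(\tilde\cp)\to M(\per\!_{dg}\Ga)\to M(\cd^b_{dg}(\ce))$ identifies the cone of $M(\per\!_{dg}\Ga,\tilde\cp)\to M(\cd^b_{dg}(\ce),\tilde\cp)$ with $\cone\bigl(M(\per\!_{dg}\Ga)\to M(\cd^b_{dg}(\ce))\bigr)\simeq\Si M(\pvd\!_{dg}\ol\Ga)$. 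Applying the triangle functor $\RHom_\La(k,-)$ therefore yields a triangle
\[
HN(\pvd\!_{dg}\ol\Ga)\longrightarrow HN(\per\!_{dg}\Ga,\tilde\cp)\longrightarrow HN(\cd^b_{dg}(\ce),\tilde\cp)\longrightarrow\Si HN(\pvd\!_{dg}\ol\Ga)\,,
\]
and its long exact sequence shows that~(\ref{eq:canonical map}) is bijective as soon as $HN_d(\pvd\!_{dg}\ol\Ga)=0$ and $HN_{d+1}(\pvd\!_{dg}\ol\Ga)=0$.

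\textbf{Step 2: the error term vanishes in positive degrees.} I claim that $HN_n(\pvd\!_{dg}\ol\Ga)=0$ for every $n>0$. By Remark~\ref{rk:Wedderburn} — exactly as $\ol\Ga$ was treated in the proof of Theorem~\ref{thm:main} — we may assume that $\ol\Ga$ is a connective complete augmented pseudo-compact dg algebra. Passing to the Koszul dual, $\pvd\!_{dg}\ol\Ga$ is quasi-equivalent to $\per\!_{dg}\ol\Ga^!$, hence derived Morita equivalent to $\ol\Ga^!$, so that $HN_n(\pvd\!_{dg}\ol\Ga)\cong HN_n(\ol\Ga^!)$. Corollary~15.2 of \cite{VandenBergh15} applied to $\ol\Ga$ gives a quasi-isomorphism of mixed complexes $M(\ol\Ga^!)\simeq\D M(\ol\Ga)$, whence $HN_n(\ol\Ga^!)\cong D\,HC_{-n}(\ol\Ga)$. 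Finally, since $\ol\Ga$ is connective its Hochschild chain complex, and therefore its cyclic complex, is concentrated in non-negative homological degrees, so $HC_{-n}(\ol\Ga)=0$ for all $n>0$; as $d\geq 1$, both $HN_d(\pvd\!_{dg}\ol\Ga)$ and $HN_{d+1}(\pvd\!_{dg}\ol\Ga)$ vanish and~(\ref{eq:canonical map}) is bijective.

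The conceptual content is concentrated in Step~1: the delicate point is to check that relativising with respect to $\tilde\cp$ does not alter the error term $\Si M(\pvd\!_{dg}\ol\Ga)$ coming from Keller's localisation theorem, which forces one to carry out the cone-of-cones comparison at the level of mixed complexes (not merely of their homologies) so that it survives the passage to negative cyclic homology. Step~2 is then soft, using only that $\ol\Ga$ is complete augmented and connective together with the two duality inputs already available in the paper; no Calabi--Yau or non-degeneracy hypothesis is needed here, which is what allows this proposition to precede the non-degeneracy arguments of the relative structure theorem. (Alternatively one can Koszul-dualise the whole situation from the start — this is where Lemma~\ref{lem:augmented}, ensuring that $\RHom_\ce(M,M)$ is complete augmented, would be used — and deduce the claim from the analogous long exact sequence of relative \emph{cyclic} homologies together with the same vanishing $HC_{-d}(\ol\Ga)=HC_{-d-1}(\ol\Ga)=0$; the two routes amount to the same computation.)
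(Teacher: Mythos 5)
Your proof is correct, and it ultimately rests on the same vanishing as the paper's --- $HC_{-d}(\ol{\Ga})=HC_{-d-1}(\ol{\Ga})=0$ by connectivity of $\ol{\Ga}$ --- but it gets there by a genuinely different route. The paper first Koszul-dualises the whole middle row of (\ref{eq:Palu's diagram 2}): using Lemma~\ref{lem:exact sequence of Koszul duals} it produces the exact sequence $0\to\pvd\!_{dg}E\to\pvd\!_{dg}\Ga\to\pvd\!_{dg}\La\to 0$ (with $E=\RHom_\ce(M,M)$ and $\La$ the Koszul dual of $\ol{\Ga}$), applies the localisation theorem there to obtain a long exact sequence in the cyclic homology of $E^!$, $\Ga^!$, $\La^!$, kills the outer terms because $\La^!\simeq\ol{\Ga}$ is connective, and then transports the resulting bijection $HC_{-d-1}(E^!)\iso HC_{-d-1}(\Ga^!)$ back through the duality $HN_*(A)\iso\D HC_{-*}(A^!)$. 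This is precisely your parenthetical ``alternative route'', and it is exactly where Lemma~\ref{lem:augmented} (completeness of $E$, needed to form $E^!$) and Lemma~\ref{lem:exact sequence of Koszul duals} enter. Your version stays on the $HN$ side throughout: you apply the localisation theorem directly to the middle row, use the octahedron to identify the cone of the map of relative mixed complexes with $\Si M(\pvd\!_{dg}\ol{\Ga})$, and Koszul-dualise only the single error term via $HN_n(\pvd\!_{dg}\ol{\Ga})\cong HN_n(\ol{\Ga}^!)\cong DHC_{-n}(\ol{\Ga})$. This bypasses both auxiliary lemmas for the purposes of this proposition, at the cost of making the cone-of-cones comparison explicit (which you do, and which is indeed the delicate point) and of invoking the pseudo-compact variant of the localisation theorem for the middle row itself rather than for its Koszul dual --- a use the paper makes elsewhere anyway, so nothing new is required. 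Both arguments thus rest on the same two inputs, Keller's localisation theorem and Van den Bergh's duality $M(A^!)\simeq\D M(A)$; yours is the more economical packaging, while the paper's keeps the Koszul-dual exact sequence available for reuse in the non-degeneracy arguments that follow.
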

\begin{proof}
Via derived Morita equivalences, the exact sequence
\[
\begin{tikzcd}
0 \arrow{r} & \pvd\!_{dg}\ol{\Ga} \arrow{r} & \per\!_{dg}\Ga \arrow{r} & \cd^b_{dg}(\ce) \arrow{r} & 0
\end{tikzcd}
\]
of dg categories corresponds to the exact sequence
\[
\begin{tikzcd}
0 \arrow{r} & \per\!_{dg}\La \arrow{r} & \per\!_{dg}\Ga \arrow{r} & \per\!_{dg}E \arrow{r} & 0\: ,
\end{tikzcd}
\]
where $\La=\RHom_{\ol{\Ga}}^{pc}(\ol{l},\ol{l})$ and $E=\RHom_\ce(M,M)$. If
we apply $\pvd\!_{dg}$ to this sequence, by Lemma~\ref{lem:exact sequence of Koszul duals}, we obtain another exact sequence
\[
\begin{tikzcd}
0 \arrow{r} & \pvd\!_{dg}E \arrow{r} & \pvd\!_{dg}\Ga \arrow{r} & \pvd\!_{dg}\La \arrow{r} & 0
\end{tikzcd}
\]
of dg categories. By Lemma~\ref{lem:augmented}, the Koszul dual $E^!$ of $E$ is well-defined and the dg category $\pvd\!_{dg}E$ is quasi-equivalent to $\per\!_{dg}(E^!)$. Moreover, the dg category $\pvd\!_{dg}\Ga$ is of course quasi-equivalent to $\per\!_{dg}(\Ga^!)$ and $\pvd\!_{dg}\La$ is quasi-equivalent to $\per\!_{dg}(\La^!)$.
Therefore, by part~(c) of Theorem~1.5 of \cite{Keller98} and derived Morita invariance of cyclic homology, there is a long exact sequence
\[
\begin{tikzcd}
HC_{-d}(\La^!) \arrow{r} & HC_{-d-1}(E^!) \arrow{r} & HC_{-d-1}(\Ga^!) \arrow{r} & HC_{-d-1}(\La^!)
\end{tikzcd}
\]
in cyclic homology. Since the dg algebra $\ol{\Ga}$ is connective, so is $\La^!$. Since $d$ is a positive integer, this implies that the cyclic homologies $HC_{-d}(\La^!)$ and $HC_{-d-1}(\La^!)$ vanish. Therefore, the canonical map 
\[
HC_{-d-1}(E^!) \longrightarrow HC_{-d-1}(\Ga^!)
\]
is bijective. By section~\ref{ss:left and right}, there is a canonical bijection $HN_{*}(A)\iso \D HC_{-*}(A^!)$ of pseudo-compact graded vector spaces for any complete augmented pseudo-compact dg algebra $A$. 
So the canonical map
\[
\begin{tikzcd}
HN_{d+1}(\per\!_{dg}\Ga,\tilde{\cp})\arrow{r} & HN_{d+1}(\cd^b_{dg}(\ce),\tilde{\cp})
\end{tikzcd}
\]
is bijective.
\end{proof}

\subsection{Proof of the implication from i) to ii) in Theorem~\ref{thm:relative}} \label{ss:proof from i) to ii) in relative}

Without loss of generality, we may and will assume that $\ce$ equals $\ch_\gamma$. Since the category
\[
\begin{tikzcd}
\cc \arrow{r}{\sim} & \cd^b(\ce)/\cd^b(\cp) = \cc_\gamma / \cd^b(\add l_F \Pi) \arrow{r}{\sim} & \cc_{\Pi/(l_F)}
\end{tikzcd}
\]
is $\Hom$-finite, by the implication from i) to ii) in Theorem~\ref{thm:main}, the preimage of $\Pi/(l_F)$ in $\cc$ is a $d$-cluster-tilting object. So $\Pi$ is a $d$-cluster-tilting object in $\ce$. Denote the source of the Ginzburg--Lazaroiu morphism $\gamma$ by $\Pi_F$. The pseudo-compact dg category $\tilde{\cp}=\per\!_{dg}\Pi_F$ and the canonical continuous dg functor $\per\!_{dg}\Pi_F \to \cc^b_{dg}(l_F \Pi)$ which sends $\Pi_F$ to $l_F \Pi$ satisfy the assumptions in section~\ref{ss:assumptions}. By the implication from i) to ii) in Theorem~4.3.1 of \cite{KellerLiu23}, the $(d+1)$-dimensional Ginzburg--Lazaroiu morphism $\gamma$ carries a left relative $(d+1)$-Calabi--Yau structure and the kernel of the induced map $\Si^{-1}\D\Ext^*_{\Pi_F}(l_F,l_F) \to \Si^{-1}\D\Ext^*_\Pi(l,l)$ is a Lagrangian homogeneous subspace concentrated in degrees less than or equal to $\frac{2-d}{2}$. Since the pseudo-compact dg categories $\per\!_{dg}\Pi$ and $\per\!_{dg}\Pi_F$ are derived Morita equivalent to the pseudo-compact dg algebras $\Pi$ respectively $\Pi_F$, the canonical continuous dg functor $\per\!_{dg}\Pi_F \to \per\!_{dg}\Pi$ also carries a left relative $(d+1)$-Calabi--Yau structure $[\tilde{\xi}]$. Since the composed continuous functor
\[
\begin{tikzcd}
\per\!_{dg}\Pi_F \arrow{r} & \cc^b_{dg}(l_F \Pi)\arrow{r} & \cd^b_{dg}(\ch_\gamma)
\end{tikzcd}
\]
is the composition of the pseudo-compact localisation $\per\!_{dg}\Pi \to \cc_{\gamma,dg}$ with the canonical continuous dg functor $\per\!_{dg}\Pi_F \to \per\!_{dg}\Pi$ induced by $\gamma$, by the variant of part~(c) of Proposition~3.10 of \cite{Keller11b} for pseudo-compact dg categories, the image of $[\tilde{\xi}]$ under the map
\[
\begin{tikzcd}
HN_{d+1}(\per\!_{dg}\Pi,\per\!_{dg}\Pi_F)\arrow{r} & HN_{d+1}(\cd^b_{dg}(\ch_\gamma),\per\!_{dg}\Pi_F)
\end{tikzcd}
\]
is a left relative $(d+1)$-Calabi--Yau structure on the continuous dg functor
\[
\per\!_{dg}\Pi_F\to \cd^b_{dg}(\ch_\gamma)\: .
\]
By the variant of Corollary~7.1 of \cite{BravDyckerhoff19} for pseudo-compact dg categories, the image of $[\tilde{\xi}]$ under the map
\[
\begin{tikzcd}
HN_{d+1}(\per\!_{dg}\Pi,\per\!_{dg}\Pi_F)\arrow{r} & HN_{d+1}(\per\!_{dg}(\Pi/(l_F)))
\end{tikzcd}
\]
is a left $(d+1)$-Calabi--Yau structure on the homotopy cofibre $\per\!_{dg}(\Pi/(l_F))$. Then by the proof of the implication from i) to ii) in Theorem~\ref{thm:main}, the image of the left \linebreak $(d+1)$-Calabi--Yau structure on $\per\!_{dg}(\Pi/(l_F))$ under the composed map
\[
\begin{tikzcd}
HN_{d+1}(\per\!_{dg}(\Pi/(l_F)))\arrow{r} & DHC_{-d-1}(\pvd\!_{dg}(\Pi/(l_F)))\arrow{r}{D\delta} & DHC_{-d}(\cc_{\Pi/(l_F),dg})
\end{tikzcd}
\]
is a right $d$-Calabi--Yau structure on the dg category $\cc_{\Pi/(l_F),dg}$. This concludes the proof of the implication from i) to ii) in Theorem~\ref{thm:relative}.

\subsection{Proof of the implication from ii) to i) in Theorem~\ref{thm:relative}} \label{ss:proof from ii) to i) in relative}

Let $[\tilde{\xi}]$ be a class in $HN_{d+1}(\per\!_{dg}\Ga,\tilde{\cp})$ such that its images $[\xi]$ in $HN_{d+1}(\cd^b_{dg}(\ce),\tilde{\cp})$ and $[x]$ in $DHC_{-d}(\cc_{dg})$ are non-degenerate. We want to prove that $[\tilde{\xi}]$ is non-degenerate, which means that the morphism $\Si^d\tilde{\cp}^\vee \to \tilde{\cp}$ in $\cd(\tilde{\cp}^e)$ and the morphism
\[
\begin{tikzcd}
\Si^{d+1}\cone(\mu \colon \per\!_{dg}\Ga\lten_{\tilde{\cp}}\per\!_{dg}\Ga \to \per\!_{dg}\Ga)^\vee \arrow{r} & \per\!_{dg}\Ga
\end{tikzcd}
\]
in $\cd((\per\!_{dg}\Ga)^e)$ (here we consider $\per\!_{dg}\Ga$ as a pseudo-compact dg category) obtained from the image of $[\tilde{\xi}]$ under the canonical map
\[
\begin{tikzcd}
HN_{d+1}(\per\!_{dg}\Ga,\tilde{\cp})\arrow{r} & HH_{d+1}(\per\!_{dg}\Ga,\tilde{\cp})
\end{tikzcd}
\]
are isomorphisms. The first one follows from the non-degeneracy of $[\xi]$. To prove the second one, it suffices to prove that its image
\begin{equation} \label{eq:isomorphism 1}
\begin{tikzcd}
L\lten_{\per\!_{dg}\Ga}\Si^{d+1}\cone(\mu)^\vee \lten_{\per\!_{dg}\Ga}M \arrow{r} & L\lten_{\per\!_{dg}\Ga}\per\!_{dg}\Ga\lten_{\per\!_{dg}\Ga}M
\end{tikzcd}
\end{equation}
under the functor $L\lten_{\per\!_{dg}\Ga}\, ?\lten_{\per\!_{dg}\Ga}M$ is an isomorphism for all $L\in \cd(\per\!_{dg}\Ga)$ and $M\in \cd((\per\!_{dg}\Ga)\op)$. We first consider the case that $L$ lies in the image of the restriction functor
$\cd(\cd^b_{dg}(\ce))\to \cd(\per\!_{dg}\Ga)$ and
$M$ lies in that of $\cd((\cd^b_{dg}(\ce))\op)\to \cd((\per\!_{dg}\Ga)\op)$. Since the class $[\xi]$ in $HN_{d+1}(\cd^b_{dg}(\ce),\tilde{\cp})$ is non-degenerate, the morphism
\begin{equation} \label{eq:isomorphism 2}
\begin{tikzcd}
\Si^{d+1}\cone(\mu \colon \cd^b_{dg}(\ce)\lten_{\tilde{\cp}}\cd^b_{dg}(\ce)\to \cd^b_{dg}(\ce))^\vee\arrow{r} & \cd^b_{dg}(\ce)
\end{tikzcd}
\end{equation}
in $\cd((\cd^b_{dg}(\ce))^e)$ obtained from the image of $[\xi]$ under the canonical map
\[
\begin{tikzcd}
HN_{d+1}(\cd^b_{dg}(\ce),\tilde{\cp})\arrow{r} & HH_{d+1}(\cd^b_{dg}(\ce),\tilde{\cp})
\end{tikzcd}
\]
is an isomorphism. Since the continuous dg functor $\per\!_{dg}\Ga \to \cd^b_{dg}(\ce)$ is a pseudo-compact localisation, the morphism $\mu \colon \cd^b_{dg}(\ce)\lten_{\per\!_{dg}\Ga}\cd^b_{dg}(\ce)\to \cd^b_{dg}(\ce)$ in $\cd((\cd^b_{dg}(\ce))^e)$ is an isomorphism. Then if we apply the functor $L\lten_{\cd^b_{dg}(\ce)}\, ?\lten_{\cd^b_{dg}(\ce)}M$ to the isomorphism (\ref{eq:isomorphism 2}), we obtain the isomorphism (\ref{eq:isomorphism 1}).
Now we consider the case that $M$ lies in the image of the induction functor $\cd((\pvd\!_{dg}\ol{\Ga})\op)\to \cd((\per\!_{dg}\Ga)\op)$, which is the colocalising subcategory generated by the simple pseudo-compact dg $(\per\!_{dg}\Ga)\op$-modules which restrict to zero on $\tilde{\cp}$. In this case, the morphism (\ref{eq:isomorphism 1}) reduces to
\[
\begin{tikzcd}
L\lten_{\per\!_{dg}\Ga}\Si^{d+1}(\per\!_{dg}\Ga)^\vee \lten_{\per\!_{dg}\Ga}M \arrow{r} & L\lten_{\per\!_{dg}\Ga}\per\!_{dg}\Ga\lten_{\per\!_{dg}\Ga}M\: .
\end{tikzcd}
\]
Since the pseudo-compact dg category $\per\!_{dg}\Ga$ is derived Morita equivalent to $\Ga$, replacing $\per\!_{dg}\Ga$ by $\Ga$ we reduce it to $L\lten_\Ga \Si^{d+1}\Ga^\vee \lten_\Ga M \to L\lten_\Ga \Ga\lten_\Ga M$. Because the derived category $\cd(\Ga)$ equals the colocalising subcategory generated by $\per\Ga$ and derived pseudo-compact tensor functors commute with arbitrary products, it suffices to consider the case that $L$ is a perfect pseudo-compact dg $\Ga$-module and $M$ is a simple pseudo-compact dg $\ol{\Ga}\op$-module. Since there are isomorphisms
\[
\D(L\lten_\Ga \Si^{d+1}\Ga^\vee \lten_\Ga M) \simeq \RHom_\Ga(L\lten_\Ga \Si^{d+1}\Ga^\vee,DM) \iso D\RHom_\Ga(DM,\Si^{d+1}L)
\]
and
\[
\begin{tikzcd}
\D(L\lten_\Ga \Ga\lten_\Ga M)\arrow[no head]{r}{\sim} & \RHom_\Ga(L\lten_\Ga \Ga,DM) & \RHom_\Ga(L,DM)\arrow[swap]{l}{\sim}\: ,
\end{tikzcd}
\]
it suffices to prove that the morphism
\begin{equation} \label{eq: d+1 non-degeneracy 2}
\begin{tikzcd}
\RHom_\Ga(L,DM)\arrow{r} & D\RHom_\Ga(DM,\Si^{d+1}L)
\end{tikzcd}
\end{equation}
in $\cd(k)$ is an isomorphism. By the bifunctoriality, the induced morphism in homology is then determined by the linear forms
\[
\tilde{\psi}_{DM}\colon \Ext^{d+1}_\Ga(DM,DM)\longrightarrow k
\]
associated with the preimage of $[x]$ in $DHC_{-d-1}(\pvd\!_{dg}\ol{\Ga})$. On the other hand, from part~c) of Theorem~5.4 of \cite{KellerReiten07}, we obtain a functorial isomorphism
\[
\begin{tikzcd}
(\per \Ga)(DM,?)\arrow{r}{\sim} & D(\per \Ga)(?,\Si^{d+1}DM)\: .
\end{tikzcd}
\]
It is completely determined by the linear form
\[
\phi_{DM}\colon \Ext^{d+1}_\Ga (DM,DM) \longrightarrow k
\]
obtained as the image of the identity of $DM$. By the proof of Proposition~\ref{prop:detect non-degeneracy}, we have $\phi_{DM}=\tilde{\psi}_{DM}$. This implies that the morphism (\ref{eq: d+1 non-degeneracy 2}) is an isomorphism, which ends the proof in this case. Similarly, we can prove that the morphism (\ref{eq:isomorphism 1}) is an isomorphism in the case that $L$ lies in the image of the induction functor $\cd(\pvd\!_{dg}\ol{\Ga})\to \cd(\per\!_{dg}\Ga)$. Finally, since the exact sequence of pseudo-compact dg categories
\[
\begin{tikzcd}
0 \arrow{r} & \pvd\!_{dg}\ol{\Ga}\arrow{r} & \per\!_{dg}\Ga \arrow{r} & \cd^b_{dg}(\ce)\arrow{r} & 0
\end{tikzcd}
\]
yields a semi-orthogonal decomposition of the derived category $\cd(\per\!_{dg}\Ga)$, the general case can be reduced to the above cases. In conclusion, the class $[\tilde{\xi}]$ in $HN_{d+1}(\per\!_{dg}\Ga,\tilde{\cp})$ is non-degenerate. 
Since the pseudo-compact dg categories $\per\!_{dg}\Ga$ and $\tilde{\cp}$ are derived Morita equivalent to $\Ga$ respectively $\Ga_F$, we find that the morphism $\Ga_F \to \Ga$ between pseudo-compact dg algebras also carries a left relative $(d+1)$-Calabi--Yau structure. Then by the implication from ii) to i) in Theorem~4.3.1 of \cite{KellerLiu23}, the $\phi$-augmented morphism $\Ga_F \to \Ga$ between pseudo-compact dg algebras is weakly equivalent to a $(d+1)$-dimensional Ginzburg--Lazaroiu morphism $\gamma$ with target $\Pi$. A choice of quasi-isomorphism between the sources and targets yields triangle equivalences between their perfect derived categories inducing triangle equivalences between their perfectly valued derived categories. Thus, we have a triangle equivalence from $\cc_\gamma$ to $\cd^b(\ce)$ taking $\Pi$ to $M$ and $l_F \Pi$ to $P$. Next we show that the image $\ce'$ of the subcategory $\ch_\gamma \subseteq \cc_\gamma$ under the equivalence $\cc_\gamma \iso \cd^b(\ce)$ is equivalent to the category $\ce$. If $d$ equals $1$, then it is obvious. If $d$ is greater than or equal to $2$, since the stable category $\cc$ is $\Hom$-finite, for any object $X$ in $\ce$, the vector spaces $\cd^b(\ce)(M,\Si^i X)\iso \cc(M,\Si^i X)$ are finite-dimensional for all $i=1$, $\ldots$, $d-1$. Since $M$ is a $d$-cluster-tilting object in $\ce$, by the variant of the implication from (1) to (2-0) in Proposition~2.4.1 of \cite{Iyama07} for exact categories, there exist conflations
\[
\begin{tikzcd}
0 \arrow{r} & X_1 \arrow{r}{f_1} & M_0 \arrow{r} & X \arrow{r} & 0\: ,
\end{tikzcd}
\]
\[
\begin{tikzcd}
0 \arrow{r} & X_2 \arrow{r}{f_2} & M_1 \arrow{r} & X_1 \arrow{r} & 0\: ,
\end{tikzcd}
\]
\[
\ldots
\]
\[
\begin{tikzcd}
0 \arrow{r} & X_{d-2} \arrow{r}{f_{d-2}} & M_{d-3} \arrow{r} & X_{d-3} \arrow{r} & 0\: ,
\end{tikzcd}
\]
\[
\begin{tikzcd}
0 \arrow{r} & M_{d-1} \arrow{r}{f_{d-1}} & M_{d-2} \arrow{r} & X_{d-2} \arrow{r} & 0
\end{tikzcd}
\]
in $\ce$ with all $M_i$ in $\cm$. Since all $f_i$ are inflations, all the maps $\ce(f_i,P)$ are surjective. So these conflations give rise to triangles in $\cd^b(\ce)$ with all $M_i$ in $\cm$ and all $\cd^b(\ce)(f_i,P)$ surjective. Similarly in the dual case. This shows that the category $\ce$ is contained in $\ce'$. On the other hand, for any object $X$ in $\ce'$, there exist triangles
\[
\begin{tikzcd}
X_1 \arrow{r}{f_1} & M_0 \arrow{r} & X \arrow{r} & \Si X_1\: ,
\end{tikzcd}
\]
\[
\begin{tikzcd}
X_2 \arrow{r}{f_2} & M_1 \arrow{r} & X_1 \arrow{r} & \Si X_2\: ,
\end{tikzcd}
\]
\[
\ldots
\]
\[
\begin{tikzcd}
X_{d-2} \arrow{r}{f_{d-2}} & M_{d-3} \arrow{r} & X_{d-3} \arrow{r} & \Si X_{d-2}\: ,
\end{tikzcd}
\]
\[
\begin{tikzcd}
M_{d-1} \arrow{r}{f_{d-1}} & M_{d-2} \arrow{r} & X_{d-2} \arrow{r} & \Si M_{d-1}
\end{tikzcd}
\]
in $\cd^b(\ce)$ with all $M_i$ in $\cm$ and all $\cd^b(\ce)(f_i,P)$ surjective. Since the objects $M_{d-2}$ and $M_{d-1}$ lie in $\ce$, the latter condition implies that $f_{d-1}$ can be assumed to be an inflation in $\ce$. Thus, the object $X_{d-2}$ can be assumed to lie in $\ce$. By induction, we deduce that $X$ is isomorphic to an object in $\ce$. This shows that the category $\ce'$ is contained in the closure under isomorphisms of the subcategory $\ce \subseteq \cd^b(\ce)$. Thus, the triangle equivalence $\cc_\gamma\iso \cd^b(\ce)$ yields an equivalence $\ch_\gamma \iso \ce$ of extriangulated categories. Since the category $\ce$ is exact, so is $\ch_\gamma$ and the equivalence $\ch_\gamma \iso \ce$ is an equivalence of exact categories. This concludes the proof of the implication from ii) to i) in Theorem~\ref{thm:relative}.



\def\cprime{$'$} \def\cprime{$'$}
\providecommand{\bysame}{\leavevmode\hbox to3em{\hrulefill}\thinspace}
\providecommand{\MR}{\relax\ifhmode\unskip\space\fi MR }
\providecommand{\MRhref}[2]{%
  \href{http://www.ams.org/mathscinet-getitem?mr=#1}{#2}
}
\providecommand{\href}[2]{#2}


\end{document}